\newtheorem{theorem}{Theorem}[section]
\newtheorem{lemma}[theorem]{Lemma}
\newtheorem{corollary}[theorem]{Corollary}
\newtheorem{proposition}[theorem]{Proposition}
\theoremstyle{definition}
\newtheorem{definition}[theorem]{Definition}
\newtheorem{remark}[theorem]{Remark}
\newtheorem{example}[theorem]{Example}
\newtheorem{problem}[theorem]{Problem}
\newtheorem*{ack}{Acknowledgement}
\theoremstyle{remark}
\newcommand{\RR}{\mathbb{R}}
\newcommand{\BMO}{\mathrm{BMO}}
\newcommand{\VMO}{\mathrm{VMO}}
\newcommand{\dist}{\mathop\mathrm{dist}}
\newcommand{\interior}{\mathop\mathrm{int}}
\newcommand{\surf}{\mathcal{H}^{n-1}}
\newcommand{\HD}{\mathop\mathrm{HD}}
\newcommand{\Good}{\mathrm{Good}}
\newcommand{\Null}{\mathrm{Null}}
\def\res{\hbox{ {\vrule height .22cm}{\leaders\hrule\hskip.2cm} } }
\numberwithin{equation}{section}
\numberwithin{figure}{section}
\begin{document}

\title[Harmonic Polynomials and Harmonic Measure]{Flat Points in Zero Sets of Harmonic Polynomials and Harmonic Measure from Two Sides}
\author{Matthew Badger}
\thanks{The author was partially supported by NSF grant DMS-0838212.}
\date{April 26, 2012}
\subjclass[2010]{Primary 28A75, 31A15, 33C55, 35R35. Secondary 14P05.}
\keywords{Harmonic polynomial, zero set, flat point, local flatness, Reifenberg flat set, harmonic measure, free boundary regularity}
\address{Department of Mathematics\\ Stony Brook University\\ Stony Brook, NY 11794-3651}
\email{badger@math.sunysb.edu}

\begin{abstract} We obtain quantitative estimates of local flatness of zero sets of harmonic polynomials. There are two alternatives: at every point either the zero set stays uniformly far away from a hyperplane in the Hausdorff distance at all scales or the zero set becomes locally flat on small scales with arbitrarily small constant. An application is given to a free boundary problem for harmonic measure from two sides, where blow-ups of the boundary are zero sets of harmonic polynomials.
\end{abstract}

\maketitle

\section{Introduction} 
\label{intro}

In this paper, we study a geometric property of the zero sets of harmonic polynomials in order to gain new information about free boundary regularity for harmonic measure from two sides. To briefly describe this application, assume that $\Omega^+=\Omega$ and $\Omega^-=\RR^n\setminus\overline{\Omega}$ are complementary domains with a common boundary $\partial\Omega^+=\partial\Omega=\partial\Omega^-$. Roughly speaking, we wish to know what does the boundary look like, when the harmonic measure from one side of the boundary and the harmonic measure from the opposite side of the boundary look the same. Thus assume that the harmonic measures $\omega^\pm$ of $\Omega^\pm$ charge the same sets (i.e.\ $\omega^+(E)=0 \Leftrightarrow \omega^-(E)=0$ for all Borel sets $E\subset\partial\Omega$). In Badger \cite{Badger1} (refining previous work by Kenig and Toro \cite{KT06}) the author established the following structure theorem for the free boundary under weak regularity:
\begin{quotation}\noindent If the Radon-Nikodym derivative $f=d\omega^-/d\omega^+$ has continuous logarithm as a function on $\partial\Omega$, then the boundary decomposes as a finite disjoint union of sets $\Gamma_d$ ($1\leq d\leq d_0$), \[ \partial\Omega=\Gamma_1\cup\dots\cup\Gamma_{d_0}, \] with the following property. Every blow-up of $\partial\Omega$ centered a point $x\in\Gamma_d$ (i.e.\ a limit of the sets $r_i^{-1}(\partial\Omega-x)$ as $r_i\rightarrow 0$ in a Hausdorff distance sense) is the zero set of a homogeneous harmonic polynomial of degree $d$.\end{quotation}
(For a precise formulation of the structure theorem, see section \ref{SectFreeBoundary} below.)
In other words, ``zooming in" on a point in the boundary, the limiting shapes that one sees are zero sets of homogeneous harmonic polynomials. Moreover, the degrees of the polynomials which appear in this fashion are uniquely determined at each point of the boundary. In particular, every boundary point belongs either to the set of ``flat points" $\Gamma_1$ where blow-ups of the boundary are hyperplanes, or to the set of ``singularities" $\Gamma_2\cup\dots\cup\Gamma_{d_0}$ where blow-ups of the boundary are zero sets of higher degree homogeneous harmonic polynomials (see Figure 1.1).
Below we study the topology, geometry, and size of the set of flat points $\Gamma_1$. We will show that $\Gamma_1$ is open in $\partial\Omega$, $\Gamma_1$ is locally Reifenberg flat with vanishing constant, and thus, $\Gamma_1$ has Hausdorff dimension $n-1$.

\begin{figure}[t]
\includegraphics[width=.3\textwidth]{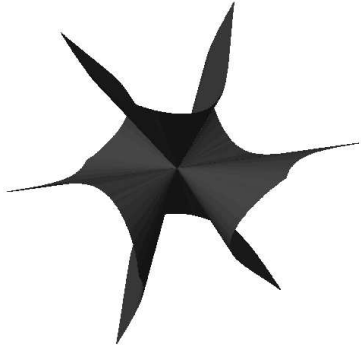}
\caption{A blow-up of $\partial\Omega$, $\Omega\subset\RR^3$ about $x\in\Gamma_3$ is the zero set of a homogeneous harmonic polynomial of degree 3 such as $x^2(y-z)+y^2(z-x)+z^2(x-y)-xyz$}
\end{figure}

The main tool that we need to study $\Gamma_1$ may be of independent interest. It is a statement about the local geometry of zero sets of harmonic polynomials, which connects analytic and geometric notions of ``regular points". While analytic regularity of a zero set at a point is indicated by the non-vanishing of the Jacobian of a defining function, geometric regularity of a zero set at a point is displayed by the existence of a tangent plane to the set. Alternatively, one may equate geometric regularity with existence of arbitrarily good approximations of the set by hyperplanes at small scales. We will show that for zero sets of harmonic polynomials these two types of regularity---analytic and geometric---coincide. Moreover, we quantify the failure of the zero set to admit good approximations by hyperplanes at its singularities. In order to state our result precisely, we need to introduce some notation.

Let $\Sigma\subset\RR^n$ $(n\geq 2$) be a closed set. The \emph{local flatness} $\theta_\Sigma(x,r)$ of $\Sigma$ near the point $x\in \Sigma$ and at scale $r>0$ is defined by (c.f. measurements of flatness in \cite{Jones}, \cite{MV}, \cite{Toro}) \begin{equation}\label{thetaapprox}\theta_\Sigma(x,r)=\frac{1}{r}\min_{L\in G(n,n-1)} \HD[\Sigma\cap B(x,r), (x+L)\cap B(x,r)],\end{equation} where as usual $G(n,n-1)$ denotes the collection of $(n-1)$-dimensional subspaces of $\RR^n$ (hyperplanes through the origin) and $\HD[A,B]$ denotes the Hausdorff distance between nonempty, compact subsets of $\RR^n$, \begin{equation}\HD[A,B]=\max\left\{\sup_{x\in A}\,\dist(x,B),\ \sup_{y\in B}\, \dist(y,A)\right\}.\end{equation} Thus local flatness is a measure of how well a set can be approximated by a hyperplane at a given location and scale (see Figure 1.2). Notice that $\theta_\Sigma(x,r)$ measures the distance of points in the set to a hyperplane \emph{and} the distance of points in a hyperplane to the set. The minimum in (\ref{thetaapprox}) is achieved for some hyperplane $L_{x,r}$ by compactness of $G(n,n-1)$; however, for typical sets $L_{x,r}$ may vary with $r$. Because the local flatness $\theta_\Sigma(x,r)\leq 1$ for every closed set $\Sigma$, for every $x\in\Sigma$ and for every $r>0$ this quantity only carries information when $\theta_\Sigma(x,r)$ is small.

\begin{figure}[t]\label{thetaflat}\begin{center}\includegraphics[width=.375\textwidth]{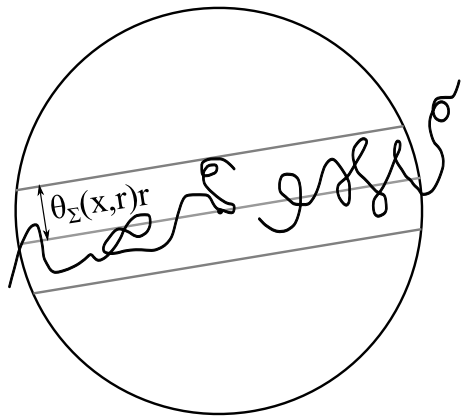}\end{center}\caption{Local flatness $\theta_\Sigma(x,r)$ of a set $\Sigma$ at scale $r$}\end{figure}

If $x\in\Sigma$ and $\lim_{r\rightarrow 0} \theta_\Sigma(x,r)=0$, then we say that $x$ is a \emph{flat point} of $\Sigma$. From our viewpoint, flat points are the ``geometric regular" points of $\Sigma$. Let us give two examples with zero sets. Given any $f:\RR^n\rightarrow\RR$, we write $\Sigma_f=\{x\in\RR^n:f(x)=0\}$ for the zero set of $f$ and we write $Df$ for the total derivative of $f$.

\begin{example} Suppose that $f:\RR^n\rightarrow\RR$ is smooth. If $x\in\Sigma_f$ and $Df(x)\neq 0$, then $\Sigma_f$ admits a unique tangent plane at $x$. Thus $x\in\Sigma_f$ is a flat point of $\Sigma_f$ whenever $Df(x)\neq 0$.\end{example}

\begin{example}[Tacnode] \label{egtacnode} The polynomial $p(x,y)=x^4+y^4-y^2$ has a singularity at the origin, i.e.\  $Dp(0)=0$. Nevertheless the origin is a flat point of $\Sigma_p$ (see Figure 1.3).
\end{example}

\begin{figure}[t]\label{tacnode}\begin{center}\includegraphics[width=.45\textwidth]{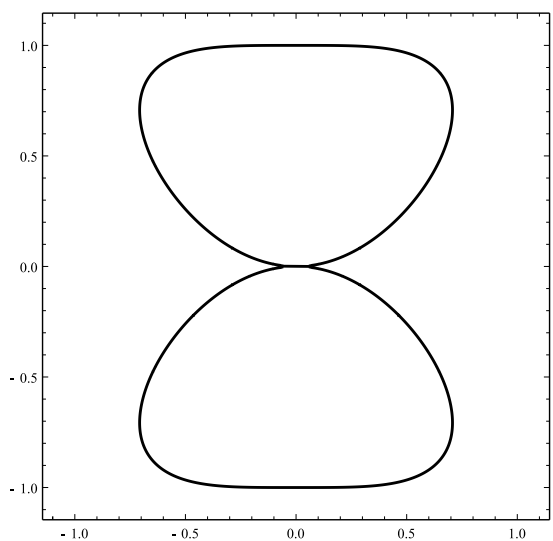}\end{center}\caption{Zero set of $x^4+y^4-y^2$}\end{figure}

These examples show that while every (analytic) regular point of the zero set of a smooth function is a flat point, the converse does not hold for a general smooth function. However, as we shall see, the converse does hold for zero sets of harmonic polynomials. Our convention below is that $p$ denotes a generic polynomial and $h$ denotes a harmonic polynomial.

\begin{theorem}\label{harmonicflat} If $h:\RR^n\rightarrow\RR$ is a nonconstant harmonic polynomial, then every flat point of $\Sigma_h$ is a regular point of $\Sigma_h$:  $\{x\in\Sigma_h:x$ is flat point of $\Sigma_h\}=\{x\in\Sigma_h:Dh(x)\neq 0\}$.\end{theorem}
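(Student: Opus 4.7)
\emph{Proof plan.} The direction $Dh(x_0)\neq 0\Rightarrow x_0$ is a flat point of $\Sigma_h$ is immediate from the implicit function theorem, as observed in Example~1.1. For the converse I prove the contrapositive: if $x_0\in\Sigma_h$ satisfies $Dh(x_0)=0$, then $\theta_{\Sigma_h}(x_0,r)$ fails to tend to $0$ as $r\downarrow 0$. After a translation we may take $x_0=0$, and we expand $h$ into its homogeneous pieces $h=h_k+h_{k+1}+\cdots+h_d$. The hypotheses $h(0)=0$ and $Dh(0)=0$ force $k\geq 2$, and since the Laplacians $\Delta h_j$ are homogeneous of mutually distinct degrees $j-2$, the identity $\Delta h=0$ forces each $h_j$ to be a homogeneous harmonic polynomial.

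Form the rescalings $h_r(x):=r^{-k}h(rx)=h_k(x)+r\,h_{k+1}(x)+\cdots+r^{d-k}h_d(x)$, which converge uniformly on $\overline{B(0,1)}$ to $h_k$. Together with a uniform Lojasiewicz estimate for polynomials of degree at most $d$ (needed to upgrade uniform convergence of functions to Hausdorff convergence of zero sets), this yields $\Sigma_{h_r}\cap\overline{B(0,1)}\to\Sigma_{h_k}\cap\overline{B(0,1)}$ in Hausdorff distance. In view of the scaling identity $\theta_{\Sigma_h}(0,r)=\theta_{\Sigma_{h_r}}(0,1)$, if $0$ were a flat point then $\Sigma_{h_k}\cap B(0,1)$ would coincide with $L\cap B(0,1)$ for some hyperplane $L$ through the origin, and by homogeneity of $h_k$ we would have $\Sigma_{h_k}=L$ globally. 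The theorem thus reduces to the following lemma: \emph{no nonzero homogeneous harmonic polynomial of degree $k\geq 2$ has zero set equal to a hyperplane.}

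To prove the lemma, rotate coordinates so that $L=\{x_n=0\}$ and factor $h_k=x_n g$ with $g$ a nonzero homogeneous polynomial of degree $k-1$; the assumption $\Sigma_{h_k}=L$ is equivalent to $g$ being nonvanishing on $\{x_n\neq 0\}$. Because $h_k$ is harmonic on $\RR^n$ and vanishes on $L$, uniqueness of harmonic extension from Cauchy data on $L$ forces $h_k(x',-x_n)=-h_k(x',x_n)$. Hence $h_k$ takes opposite constant signs on the two connected components of $\{x_n\neq 0\}$, and after replacing $h_k$ by $-h_k$ if necessary we have $g>0$ on $\{x_n\neq 0\}$.

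The punchline is a one-line orthogonality argument on the unit sphere. The restriction $\phi:=h_k|_{S^{n-1}}$ is a spherical harmonic of degree $k\geq 2$, hence orthogonal in $L^2(S^{n-1})$ to every spherical harmonic of degree $1$; in particular $\int_{S^{n-1}}\phi(\omega)\,\omega_n\,d\sigma(\omega)=0$. However, writing $\phi(\omega)=\omega_n\,g(\omega)$,
\[
\int_{S^{n-1}}\phi(\omega)\,\omega_n\,d\sigma(\omega)=\int_{S^{n-1}}\omega_n^2\,g(\omega)\,d\sigma(\omega)>0,
\]
because the integrand is nonnegative and strictly positive off the equator. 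This contradiction completes the proof. The main technical obstacle I foresee is the Hausdorff convergence of polynomial zero sets in the blow-up step, which requires a uniform Lojasiewicz bound over polynomials of degree at most $d$; the harmonic hypothesis itself enters only through the clean orthogonality identity above.
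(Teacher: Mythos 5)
Your easy direction and the final lemma are both correct, and the lemma's proof is a genuinely different (and arguably slicker) route than the paper's Lemma \ref{hlemma}: factor $h_k=x_n g$, use Cauchy--data uniqueness to force $h_k$ odd under $x_n\mapsto -x_n$ (hence $g$ even, hence one sign on $\{x_n\neq 0\}$), and then the single orthogonality identity $\int_{S^{n-1}}h_k(\omega)\,\omega_n\,d\sigma=0$ to conclude. The paper instead splits on the parity of $k$, works directly with the mean-value integral or the orthogonality integral on the sphere, and brings in Lipschitz estimates for spherical harmonics (Proposition \ref{spharmlip})---a heavier argument, but it buys a \emph{quantitative} lower bound $\theta_{\Sigma_h}(0,r)\geq\delta'_{n,k}$ uniform over all homogeneous harmonic polynomials of degree $k$, which the paper needs for Theorem \ref{polythm} and its later applications. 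For the qualitative Theorem \ref{harmonicflat} alone, your qualitative lemma suffices.

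The genuine gap is in the blow-up step. A uniform \L ojasiewicz bound for degree-$\leq d$ polynomials, even if you had one, does \emph{not} upgrade uniform convergence of functions to Hausdorff convergence of zero sets. For instance, $p_j(x,y)=x^2+y^2/j$ has degree $2$ with bounded coefficients and converges uniformly on $B_1$ to $p=x^2$, yet $\Sigma_{p_j}=\{0\}$ while $\Sigma_p$ is a line, so $\Sigma_{p_j}\cap B_1\not\to\Sigma_p\cap B_1$. The easy inclusion (limits of zeros of $h_r$ are zeros of $h_k$) only gives you $L\subset\Sigma_{h_k}$, and that is \emph{not} a contradiction: $h(x,y,z)=z(x^2-y^2)$ is homogeneous harmonic of degree $3$ and vanishes on the hyperplane $\{z=0\}$ without its zero set \emph{being} that hyperplane. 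You need the hard inclusion $\Sigma_{h_k}\cap\interior B_1\subset\lim(\Sigma_{h_r}\cap B_1)$, and this really requires the harmonic structure: since $h$ is harmonic its homogeneous parts are harmonic (Lemma \ref{hparts}), so each $h_r$ is harmonic, and the mean-value property forces $h_k$ to change sign in every ball centered on $\Sigma_{h_k}$; combined with uniform convergence and the intermediate value theorem this produces a nearby zero of $h_r$. That is precisely the content of Lemma \ref{hpfconverge} and Corollary \ref{hptohomog} in the paper. Replace the \L ojasiewicz appeal with that argument and your proof of Theorem \ref{harmonicflat} closes.
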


In fact, we establish the following stronger, quantitative statement. It says that if the zero set of a harmonic polynomial is sufficiently close to a hyperplane at a single scale, then one can automatically conclude the set is flat at that location.

\begin{theorem}\label{polythm}For all $n\geq 2$ and $d\geq 1$ there exists a constant $\delta_{n,d}>0$ such that for any harmonic polynomial $h:\RR^n\rightarrow\RR$ of degree $d$ and for any $x\in\Sigma_h$,
\begin{eqnarray*} Dh(x)=0\quad&\Leftrightarrow\quad&\theta_{\Sigma_h}(x,r)\geq \delta_{n,d}\quad\hbox{for all }r>0,\\
 Dh(x)\neq 0\quad&\Leftrightarrow\quad&\theta_{\Sigma_h}(x,r)< \delta_{n,d}\quad\hbox{for some }r>0.\end{eqnarray*} Moreover, there exists a constant $C_{n,d}>1$ such that if $\theta_{\Sigma_h}(x,r)<\delta_{n,d}$ for some $r>0$, then $\theta_{\Sigma_h}(x,sr)< C_{n,d} s$ for all $s\in(0,1)$.
\end{theorem}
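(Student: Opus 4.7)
The plan is to establish the equivalences via a compactness/blow-up argument using the Hopf boundary point lemma for the rigidity. The reverse direction, that $Dh(x)\neq 0$ implies $\theta_{\Sigma_h}(x,r)<\delta_{n,d}$ for some $r>0$, is immediate from the implicit function theorem: $\Sigma_h$ is a smooth hypersurface near $x$ and $\theta_{\Sigma_h}(x,r)\to 0$ as $r\to 0$. All the content therefore lies in the forward direction: if $Dh(x)=0$, then $\theta_{\Sigma_h}(x,r)\ge\delta_{n,d}$ for every $r>0$.

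I would argue this by contradiction. Assume there exist harmonic polynomials $h_k$ of degree $d$, critical points $x_k\in\Sigma_{h_k}$, and radii $r_k>0$ with $\theta_{\Sigma_{h_k}}(x_k,r_k)\to 0$, and rescale to
\[
\tilde h_k(y)=\frac{h_k(x_k+r_ky)}{\|h_k(x_k+r_k\cdot)\|_{L^\infty(B(0,1))}}.
\]
Each $\tilde h_k$ is a harmonic polynomial of degree at most $d$ with $\|\tilde h_k\|_{L^\infty(B(0,1))}=1$, $\tilde h_k(0)=0$, $D\tilde h_k(0)=0$, and $\theta_{\Sigma_{\tilde h_k}}(0,1)\to 0$, since local flatness transforms correctly under this rescaling. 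Because harmonic polynomials of degree at most $d$ with unit sup norm on $B(0,1)$ form a compact subset of a finite-dimensional vector space, a subsequence converges uniformly on $B(0,1)$ to a nonzero harmonic polynomial $h_\infty$ of degree at most $d$ with $h_\infty(0)=0$ and $Dh_\infty(0)=0$; simultaneously, the minimizing hyperplanes $L_k\in G(n,n-1)$ for $\tilde h_k$ converge to some $L\in G(n,n-1)$.

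The key step is converting Hausdorff closeness of $\Sigma_{\tilde h_k}$ to $L_k$ into sign information on $h_\infty$. The one-sided containment $\Sigma_{\tilde h_k}\cap B(0,1)\subseteq N_{\epsilon_k}(L_k)$ with $\epsilon_k\to 0$ forces $\tilde h_k$ to be zero-free on each of the two components of $B(0,1)\setminus N_{2\epsilon_k}(L)$ for $k$ large, hence to have constant sign on each component. Passing to a further subsequence so the signs are fixed and taking the pointwise limit, $h_\infty$ has constant sign---say $h_\infty\ge 0$---on the open half-ball $U=B(0,1)\cap\{y\cdot\nu>0\}$, where $\nu$ is a unit normal to $L$. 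The Hopf boundary point lemma applied to the harmonic function $h_\infty$ on $U$ at the boundary point $0$ (the interior sphere condition holds trivially) now yields either $h_\infty\equiv 0$ on $U$---impossible by unique continuation and $\|h_\infty\|_{L^\infty(B(0,1))}=1$---or strictly positive inner normal derivative $\partial_\nu h_\infty(0)>0$---impossible since $Dh_\infty(0)=0$. This contradiction produces $\delta_{n,d}$.

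For the moreover statement, after rescaling one must show that, for a harmonic polynomial $\tilde h$ of degree at most $d$ with $\|\tilde h\|_{L^\infty(B(0,1))}=1$, $\tilde h(0)=0$, and $\theta_{\Sigma_{\tilde h}}(0,1)<\delta_{n,d}$, one has $\theta_{\Sigma_{\tilde h}}(0,s)\le C_{n,d}s$ for $s\in(0,1)$. The just-proved implication gives $D\tilde h(0)\neq 0$, and an additional compactness argument---a variant of the one above in which one also assumes $|D\tilde h_k(0)|\to 0$ and reaches the same Hopf-type contradiction---yields a uniform lower bound $|D\tilde h(0)|\ge c_{n,d}>0$ on the class of such polynomials. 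Markov's inequality then bounds the higher derivatives of $\tilde h$ on $B(0,1)$ by constants depending only on $d$, so the Taylor remainder $R(y)=\tilde h(y)-D\tilde h(0)\cdot y$ satisfies $|R(y)|\le M_d|y|^2$; comparing $\Sigma_{\tilde h}\cap B(0,s)$ with the hyperplane $\{D\tilde h(0)\cdot y=0\}$ yields $\theta_{\Sigma_{\tilde h}}(0,s)\le (M_d/c_{n,d})s$. The principal obstacle throughout is that uniform convergence of polynomials does not imply Hausdorff convergence of their zero sets, so one cannot identify $\Sigma_{h_\infty}$ with $L$ directly; the sign/Hopf route circumvents this difficulty by exploiting only the one-sided Hausdorff containment.
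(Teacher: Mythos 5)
Your proposal takes a genuinely different route from the paper. The paper builds machinery around the quantity $\zeta_1(p,x,r)$ (the relative size of the linear part), proves $\theta_{\Sigma_p}\leq C_d\,\zeta_1$ (Lemma \ref{zetaimpliesflat}), shows via the mean value property and the $L^2$-orthogonality of spherical harmonics that zero sets of homogeneous harmonic polynomials of degree $\geq 2$ are uniformly non-flat at the origin (Lemma \ref{hlemma}, which also uses the Lipschitz estimate of Proposition \ref{spharmlip}), and then runs a compactness argument (Proposition \ref{converselemma}) to show $\theta<\delta_{n,d}\Rightarrow\zeta_1<\delta_{n,d}^{-1}$; the ``moreover'' clause is then an immediate corollary of the scaling identity $\zeta_1(p,x,sr)\leq s\,\zeta_1(p,x,r)$. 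You bypass all of this $\zeta_1$-machinery: your normal-families argument extracts a limit $h_\infty$ directly and derives rigidity from the Hopf boundary point lemma on a half-ball. The crucial insight you correctly identify --- that uniform convergence of polynomials does not give Hausdorff convergence of zero sets, but one-sided containment of $\Sigma_{\tilde h_k}$ in a thin slab yields a \emph{sign} condition that passes to the limit --- is exactly what makes your argument sound, and it neatly avoids the zero-set-convergence analysis the paper develops in \S\ref{SectConvergenceZeroSets}. (Note also you should dispose of the two subcases where $\tilde h_k$ has the same sign on both components of the slab complement: there $h_\infty\geq 0$ on all of $B_1$ with $h_\infty(0)=0$, and the interior strong minimum principle already forces $h_\infty\equiv 0$, contradicting $\|h_\infty\|_{L^\infty(B_1)}=1$.)

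There is, however, a real gap in your proof of the ``moreover'' clause. You claim a uniform lower bound $|D\tilde h(0)|\geq c_{n,d}$ on the class of normalized harmonic polynomials satisfying $\tilde h(0)=0$ and $\theta_{\Sigma_{\tilde h}}(0,1)<\delta_{n,d}$, via ``a variant of the one above in which one also assumes $|D\tilde h_k(0)|\to 0$.'' But inside that class, the local flatness is merely bounded by the fixed threshold $\delta_{n,d}$ --- it need not tend to $0$ along your sequence. Then the slab containing $\Sigma_{\tilde h_k}\cap B_1$ has width bounded below, and in the limit you only get $h_\infty\geq 0$ on $B_1\cap\{y\cdot\nu>\delta_{n,d}\}$, which does \emph{not} touch the origin; the Hopf lemma at $0$ is not applicable to a domain whose closure excludes $0$. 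The fix is standard but must be stated: run the compactness argument with \emph{both} $|D\tilde h_k(0)|\to 0$ and $\theta_{\Sigma_{\tilde h_k}}(0,1)\to 0$, which yields \emph{some} $\delta'>0$ and $c'>0$ with $\theta<\delta'\Rightarrow|D\tilde h(0)|\geq c'$, and then shrink $\delta_{n,d}$ to $\min(\delta_{n,d},\delta')$. By contrast, the paper's ``moreover'' is immune to this subtlety because it does not rely on a second compactness argument at a fixed threshold --- it is a one-line consequence of Lemma \ref{lineardecay} once Proposition \ref{converselemma} gives $\zeta_1<\delta_{n,d}^{-1}$.
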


The tacnode at the origin in Example \ref{egtacnode} shows that in general the zero set of a polynomial can be flat at a singularity of the polynomial. In contrast, Theorem \ref{polythm} says that for any harmonic polynomial $h:\RR^n\rightarrow\RR$ the zero set $\Sigma_h$ is ``far away from flat" (i.e.\ $\theta_{\Sigma_h}(x,r)\geq \delta_{n,d}$ for all $r>0$) at every singularity of the polynomial, uniformly across all harmonic polynomials of a specified degree. Thus for harmonic polynomials the set of flat points of $\Sigma_h$ (geometric regularity) and regular points of $\Sigma_h$ (analytic regularity) coincide. It would be interesting to know for which functions this property holds.

\begin{problem} Classify all polynomials $p:\RR^n\rightarrow\RR$ such that the flat points of $\Sigma_p$ and the regular points of $\Sigma_p$ coincide, i.e. such that $x\in\Sigma_p$ is flat implies $Dp(x)\neq 0$.\end{problem}

\begin{problem} Classify all smooth functions $F:\RR^n\rightarrow\RR$ such that the flat points of $\Sigma_F$ and the regular points of $\Sigma_F$ coincide, i.e. such that $x\in\Sigma_F$ is flat implies $DF(x)\neq 0$.\end{problem}

To prove Theorem \ref{polythm} we identify a certain quantity $\zeta_1(p,x,r)\in[0,\infty]$ which measures the ``relative size of the linear term" of a polynomial $p:\RR^n\rightarrow\RR$. This quantity depends continuously on the coefficients of $p$, identifies whether $Dp(x)$ vanishes, and bounds the local flatness $\theta_{\Sigma_p}(x,r)$ of $\Sigma_p$ from above. Moreover, at any $x\in\Sigma_p$, the quantity $\zeta_1(p,x,r)$ decays linearly, in the sense that $\zeta_1(p,x,sr)\leq s\zeta_1(p,x,r)$ for all $s\in(0,1)$. To establish Theorem \ref{polythm}, the critical step is to show that for harmonic polynomials $\zeta_1(h,x,r)<\infty$ whenever $\theta_{\Sigma_h}(x,r)$ is sufficiently small (see Proposition \ref{converselemma}). The key facts about harmonic polynomials which are useful for this purpose are the mean value property for harmonic functions and estimates on the Lipschitz constant of spherical harmonics from \cite{Badger1}.

The remainder of the paper is divided into two parts. In the first part, \S\S \ref{SectRelativeSizes}--\ref{SectFlatnessZeroSets}, we build up the proof of Theorem 1.4. To start, in \S\ref{SectRelativeSizes} we define the relative size $\zeta_k(p,x,r)$ of the homogeneous part of degree $k$, of a polynomial $p$, on the ball $B(x,r)$. Then we record the basic properties of these numbers, which are used in the sequel. Section \ref{SectConvergenceZeroSets} proceeds with a brief discussion on convergence of zero sets of polynomials in the Hausdorff distance. In particular, in Corollary \ref{hptohomog}, we identify the blow-ups in the Hausdorff distance sense of zero sets of harmonic polynomials. Section \ref{SectFlatnessZeroSets} is devoted to the connection between the relative size of the linear term $\zeta_1(p,x,r)$ and the local flatness $\theta_{\Sigma_p}(x,r)$ of the zero set. First we show that for any polynomial, not necessarily harmonic, the relative size of the linear term controls local flatness of the zero set (Lemma \ref{zetaimpliesflat}). To establish a converse for harmonic polynomials, we first demonstrate that zero sets of homogeneous harmonic polynomials of degree $d\geq 2$ are uniformly far away from flat at the origin (Lemma \ref{hlemma}). We then pass to a converse for zero sets of generic harmonic polynomials and the proof of Theorem \ref{polythm}, using a normal families/blow-up type argument and the technology of \S\ref{SectRelativeSizes} and \S\ref{SectConvergenceZeroSets}.

In the second part, \S\S\ref{SectApproximation}--\ref{SectFreeBoundary}, we turn to applications of Theorem \ref{polythm}. In \S\ref{SectApproximation}, we examine a variant of Reifenberg flat sets, where local approximations of a set by hyperplanes at small scales are replaced with local approximations by zero sets of harmonic polynomials (see Definitions \ref{lapprox} and \ref{HdApprox}). Using Theorem \ref{polythm}, we deduce that if a set $\Sigma$ admits arbitrarily close local approximations by zero sets of harmonic polynomials, then the local flatness $\theta_{\Sigma}(x,r)$ of $\Sigma$ at one scale $r>0$ yields good control of the local flatness $\theta_{\Sigma}(x,r')$ of $\Sigma$ at all smaller scales $0<r'\leq r$ (see Lemma \ref{emu}). If, in addition, all blow-ups of $\Sigma$ are zero sets of \emph{homogeneous} harmonic polynomials, then the subset $\Sigma_1$ of flat points of $\Sigma$ is open; and $\Sigma_1$ is locally Reifenberg flat with vanishing constant (Theorem \ref{generalopenthm}). Finally, in \S\ref{SectFreeBoundary}, we specialize the results from \S\ref{SectApproximation} to the setting of free boundary regularity for harmonic measure from two sides discussed above. In particular, we obtain refined information about the set of flat points $\Gamma_1$ in the free boundary $\partial\Omega$. We end with a list of open problems about free boundary regularity for harmonic measure from two sides.

\section{Relative Size of Homogeneous Parts of a Polynomial}
\label{SectRelativeSizes}

Let $x\in\RR^n$. A polynomial $p:\RR^n\rightarrow\RR$ of degree $d\geq 1$ decomposes as \begin{equation}\label{hd0}p(z)=p^{(x)}_d(z-x)+\dots+p^{(x)}_1(z-x)+p^{(x)}_0(z-x)\end{equation} where each non-zero term $p^{(x)}_k$ is a \emph{homogenous} polynomial of degree $k$, i.e.\ \begin{equation}\label{hd1}p_k^{(x)}(ty)=t^kp_k^{(x)}(y)\quad\text{for all }t\in\RR\text{ and }y\in\RR^n.\end{equation} We call $p_k^{(x)}$ the \emph{homogeneous part} of $p$ of \emph{degree} $k$ with \emph{center} $x$. By Taylor's theorem, \begin{equation}\label{pkxdefn}p^{(x)}_k(y)=\sum_{|\alpha|=k}\frac{D^\alpha p(x)}{\alpha!}y^\alpha\quad\text{for all }y\in\RR^n.\end{equation} In the sequel, it will be convenient to quantify the relative sizes of homogeneous parts.

\begin{definition}\label{zetanums} Let $p:\RR^n\rightarrow\RR$ be a polynomial of degree $d\geq 1$. For every $0\leq k\leq d$, $x\in\RR^n$ and $r>0$, define \begin{equation}\label{zetadef}\zeta_k(p,x,r)=\max_{j\neq k}\frac{\|p^{(x)}_j\|_{L^\infty(B_r)}}{\|p^{(x)}_k\|_{L^\infty(B_r)}}\in[0,\infty].\end{equation}\end{definition}

\begin{remark} Definition \ref{zetanums} generalizes the two quantities $\zeta(h)$ and $\zeta_*(h)$ associated to a harmonic polynomial $h$, which appeared in Badger \cite{Badger1} (see Lemma 4.3 and Lemma 4.5). In the present notation, if $h=h^{(0)}_d+h^{(0)}_{d-1}+\dots+h^{(0)}_j$ is a harmonic polynomial of degree $d\geq 1$ such that $h(0)=0$ and $h^{(0)}_j\neq 0$, then $\zeta(h)=\zeta_d(h,0,1)$ and $\zeta_*(h)=\zeta_j(h,0,1)$.\end{remark}

Because $\zeta_k(p,x,r)$ measures the \emph{relative} size of homogeneous parts of a polynomial, scaling $p$ does not affect $\zeta_k$. This simple observation will enable proofs via normal families (for example, see the proof of Proposition \ref{converselemma}), by allowing us to assume a sequence of polynomials with certain properties has uniformly bounded coefficients.

\begin{lemma}\label{rescale} If $p:\RR^n\rightarrow\RR$ is a polynomial of degree $d\geq 1$ and $c\in\RR\setminus\{0\}$, then $\zeta_k(cp,x,r)=\zeta_k(p,x,r)$ for all $0\leq k\leq d$, $x\in\RR^n$ and $r>0$.\end{lemma}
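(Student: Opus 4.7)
The plan is to reduce the claim to the observation that the homogeneous decomposition in (\ref{hd0}) is linear in $p$. More precisely, I will show that $(cp)^{(x)}_j = c\,p^{(x)}_j$ for every $0\leq j\leq d$, and then pull the constant $|c|$ out of both the numerator and denominator in the definition of $\zeta_k$.

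First I would fix $x\in\RR^n$ and verify the identity $(cp)^{(x)}_j = c\,p^{(x)}_j$. The cleanest route is through the explicit formula (\ref{pkxdefn}): since $D^\alpha (cp)(x) = c\,D^\alpha p(x)$ for every multi-index $\alpha$,
\begin{equation*}
(cp)^{(x)}_j(y) = \sum_{|\alpha|=j}\frac{D^\alpha(cp)(x)}{\alpha!}y^\alpha = c\sum_{|\alpha|=j}\frac{D^\alpha p(x)}{\alpha!}y^\alpha = c\,p^{(x)}_j(y).
\end{equation*}
Alternatively, one could invoke the uniqueness of the decomposition of a polynomial into homogeneous parts centered at $x$: $\sum_j c\,p^{(x)}_j(z-x)$ is a decomposition of $cp$ into terms satisfying (\ref{hd1}), and such a decomposition is unique.

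With this identity in hand, taking $L^\infty(B_r)$-norms yields $\|(cp)^{(x)}_j\|_{L^\infty(B_r)} = |c|\,\|p^{(x)}_j\|_{L^\infty(B_r)}$ for every $j$. Because $c\neq 0$, the factor $|c|$ is a positive real number that appears identically in the numerator and denominator of the ratio defining $\zeta_k$. Substituting into (\ref{zetadef}) gives
\begin{equation*}
\zeta_k(cp,x,r) = \max_{j\neq k}\frac{|c|\,\|p^{(x)}_j\|_{L^\infty(B_r)}}{|c|\,\|p^{(x)}_k\|_{L^\infty(B_r)}} = \zeta_k(p,x,r),
\end{equation*}
where the convention $a/0=\infty$ for $a>0$ and $0/0 = 0$ (implicit in allowing values in $[0,\infty]$) is preserved under scaling by $|c|>0$.

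There is no real obstacle here; the lemma is essentially a bookkeeping statement. The only point worth flagging is that one must handle the extended-real-valued case $\|p^{(x)}_k\|_{L^\infty(B_r)}=0$ so that $\zeta_k=\infty$ or the indeterminate form can occur, but since multiplication by $|c|>0$ preserves vanishing and non-vanishing of each norm, both sides agree in every case.
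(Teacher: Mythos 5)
Your proof is correct and follows the same route as the paper: establish $(cp)^{(x)}_j = c\,p^{(x)}_j$ (the paper states this without derivation; you verify it via (\ref{pkxdefn})), then cancel the common factor $|c|$ in the defining ratio. Your extra remark about the extended-real conventions is a harmless bit of care that the paper leaves implicit.
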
 \begin{proof} Suppose that $p:\RR^n\rightarrow\RR$ is a polynomial of degree $d\geq 1$, and let $c\in\RR\setminus\{0\}$. Since $(cp)_k^{(x)}=c(p_k^{(x)})$ for all $0\leq k\leq d$, \begin{equation}\begin{split}\zeta_k(cp,x,r)=\max_{j\neq k}\frac{\|cp^{(x)}_j\|_{L^\infty(B_r)}}{\|cp^{(x)}_k\|_{L^\infty(B_r)}}
&=\max_{j\neq k}\frac{|c|\cdot\|p^{(x)}_j\|_{L^\infty(B_r)}}{|c|\cdot\|p^{(x)}_k\|_{L^\infty(B_r)}}\\ &=\max_{j\neq k}\frac{\|p^{(x)}_j\|_{L^\infty(B_r)}}{\|p^{(x)}_k\|_{L^\infty(B_r)}}=\zeta_k(p,x,r)\end{split}\end{equation} for all $x\in\RR^n$ and all $r>0$.\end{proof}

The quantity $\zeta_k(p,x,r)$ also behaves well under translation and dilation.

\begin{lemma}\label{recenter} Suppose that $p:\RR^n\rightarrow\RR$ is a polynomial of degree $d\geq 1$. If $z\in\RR^n$, then $\zeta_k(p(\cdot+z),x,r)=\zeta_k(p,x+z,r)$ for all $x\in\RR^n$, for all $r>0$ and for all $0\leq k\leq d$.\end{lemma}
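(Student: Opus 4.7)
The plan is to reduce the identity to the trivial fact that homogeneous parts are intrinsic to the polynomial and its center, by showing that if we set $q(y):=p(y+z)$, then its homogeneous part of degree $k$ with center $x$ agrees with the homogeneous part of $p$ of degree $k$ with center $x+z$. Once this identification is made, the $L^\infty(B_r)$ norms appearing in Definition \ref{zetanums} coincide term by term, and the ratios defining $\zeta_k(q,x,r)$ and $\zeta_k(p,x+z,r)$ are literally identical (including the convention that the ratio is $\infty$ whenever the denominator vanishes).

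To execute the identification, I would invoke the Taylor expansion (\ref{pkxdefn}). By the chain rule, $D^\alpha q(x)=D^\alpha p(x+z)$ for every multi-index $\alpha$, so
\[ q^{(x)}_k(y) = \sum_{|\alpha|=k} \frac{D^\alpha q(x)}{\alpha!}\,y^\alpha = \sum_{|\alpha|=k} \frac{D^\alpha p(x+z)}{\alpha!}\,y^\alpha = p^{(x+z)}_k(y) \]
for every $y\in\RR^n$ and every $0\le k\le d$. Alternatively, one can avoid derivatives entirely: expanding $p$ about $x+z$ gives $p(w)=\sum_{k} p_k^{(x+z)}(w-x-z)$, and substituting $w=y+z$ yields $q(y)=\sum_k p_k^{(x+z)}(y-x)$, which, by the uniqueness of the decomposition of $q$ into homogeneous parts recentered at $x$, forces $q_k^{(x)}=p_k^{(x+z)}$.

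There is essentially no obstacle here beyond bookkeeping; the only subtlety worth mentioning is handling the case where $\|p_k^{(x+z)}\|_{L^\infty(B_r)}=0$, in which case both $\zeta_k(q,x,r)$ and $\zeta_k(p,x+z,r)$ take the value $\infty$ by convention and the identity still holds. Substituting $q_k^{(x)}=p_k^{(x+z)}$ into (\ref{zetadef}) then gives
\[ \zeta_k(p(\cdot+z),x,r)=\max_{j\neq k}\frac{\|q_j^{(x)}\|_{L^\infty(B_r)}}{\|q_k^{(x)}\|_{L^\infty(B_r)}}=\max_{j\neq k}\frac{\|p_j^{(x+z)}\|_{L^\infty(B_r)}}{\|p_k^{(x+z)}\|_{L^\infty(B_r)}}=\zeta_k(p,x+z,r), \]
completing the proof.
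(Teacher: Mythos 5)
Your proof is correct and takes essentially the same route as the paper: set $q=p(\cdot+z)$, use the Taylor formula (\ref{pkxdefn}) together with $D^\alpha q(x)=D^\alpha p(x+z)$ to conclude $q_k^{(x)}=p_k^{(x+z)}$, and then read off the identity directly from Definition \ref{zetanums}. The alternative substitution argument you sketch and the remark about the $\infty$ convention are fine additions but not needed.
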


\begin{proof} Let $p:\RR^n\rightarrow\RR$ be a polynomial of degree $d\geq 1$, fix $x\in\RR^n$ and define $q:\RR^n\rightarrow\RR$ by $q(y)=p(y+z)$ for all $y\in\RR^n$. Then $q$ is a polynomial of degree $d$. Moreover, for all $0\leq k\leq d$, \begin{equation} q^{(x)}_k(y)=\sum_{|\alpha|=k}\frac{D^\alpha q(x)}{\alpha!}y^\alpha=\sum_{|\alpha|=k}\frac{D^\alpha p(x+z)}{\alpha!}y^\alpha=p^{(x+z)}_k(y)\quad\text{for all }y\in\RR^n.\end{equation} Thus, $q^{(x)}_k=p^{(x+z)}_k$ for all $x\in\RR^n$ and for all $0\leq k\leq d$. It immediately follows that $\zeta_k(q,x,r)=\zeta_k(p,x+z,r)$ for all $0\leq k\leq d$, for all $x\in\RR^n$ and for all $r>0$.\end{proof}

\begin{lemma}\label{dilate} If $p:\RR^n\rightarrow\RR$ is a polynomial of degree $d\geq 1$ and $t>0$, then $\zeta_k(p(t\cdot),x,r)=\zeta_k(p,tx,tr)$ for all $x\in\RR^n$, for all $r>0$ and for all $0\leq k\leq d$.
\end{lemma}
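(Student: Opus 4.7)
The proof will closely parallel that of Lemma \ref{recenter}, with the additional bookkeeping of powers of $t$ absorbed using the homogeneity of the pieces $p_k^{(y)}$. My plan is as follows.

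First I would set $q(y)=p(ty)$, which is a polynomial of degree $d$, and compute its homogeneous parts centered at $x$. By the chain rule, $D^\alpha q(x) = t^{|\alpha|} (D^\alpha p)(tx)$ for every multi-index $\alpha$, so formula (\ref{pkxdefn}) gives
\begin{equation}
q_k^{(x)}(y) \;=\; \sum_{|\alpha|=k} \frac{D^\alpha q(x)}{\alpha!}\, y^\alpha \;=\; t^k \sum_{|\alpha|=k} \frac{D^\alpha p(tx)}{\alpha!}\, y^\alpha \;=\; t^k\, p_k^{(tx)}(y)
\end{equation}
for all $y\in\RR^n$ and all $0\le k\le d$.

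Next I would convert the $L^\infty$-norm on $B_r$ into an $L^\infty$-norm on $B_{tr}$ using the $k$-homogeneity of $p_k^{(tx)}$, via the substitution $y=z/t$: for any $k$-homogeneous polynomial $P$, one has $\|P\|_{L^\infty(B_r)} = t^{-k}\|P\|_{L^\infty(B_{tr})}$. Applying this with $P = p_k^{(tx)}$ gives
\begin{equation}
\|q_k^{(x)}\|_{L^\infty(B_r)} \;=\; t^k\, \|p_k^{(tx)}\|_{L^\infty(B_r)} \;=\; t^k\cdot t^{-k}\, \|p_k^{(tx)}\|_{L^\infty(B_{tr})} \;=\; \|p_k^{(tx)}\|_{L^\infty(B_{tr})}.
\end{equation}

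Finally I would substitute these equalities into Definition \ref{zetanums}: the $L^\infty$-norms of $q_j^{(x)}$ on $B_r$ match those of $p_j^{(tx)}$ on $B_{tr}$ term by term, so
\begin{equation}
\zeta_k(p(t\cdot),x,r) \;=\; \max_{j\neq k} \frac{\|q_j^{(x)}\|_{L^\infty(B_r)}}{\|q_k^{(x)}\|_{L^\infty(B_r)}} \;=\; \max_{j\neq k} \frac{\|p_j^{(tx)}\|_{L^\infty(B_{tr})}}{\|p_k^{(tx)}\|_{L^\infty(B_{tr})}} \;=\; \zeta_k(p,tx,tr),
\end{equation}
which is the desired identity. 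There is no real obstacle here; the only thing to watch is that the power of $t$ picked up from differentiation ($t^k$) cancels exactly against the power of $t$ that appears when rescaling the radius of the ball ($t^{-k}$), which is precisely what makes the relative size invariant under dilation.
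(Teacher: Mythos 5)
Your proposal is correct and follows essentially the same argument as the paper: compute $q_k^{(x)} = t^k p_k^{(tx)}$ from (\ref{pkxdefn}), then use degree-$k$ homogeneity to rescale the ball from $B_r$ to $B_{tr}$ so the powers of $t$ cancel. The only cosmetic difference is that you cancel the powers of $t$ in each norm before forming the ratio, whereas the paper forms the ratio first and cancels afterwards.
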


\begin{proof} Let $p:\RR^n\rightarrow\RR$ be a polynomial of degree $d\geq 1$, fix $t>0$ and define $q:\RR^n\rightarrow\RR$ by $q(y)=p(ty)$ for all $y\in\RR^n$. Then $q$ is a polynomial of degree $d$ and for all $0\leq k\leq d$, \begin{equation} q^{(x)}_k(y)=\sum_{|\alpha|=k}\frac{D^\alpha q(x)}{\alpha!}y^\alpha=t^k\sum_{|\alpha|=k}\frac{D^\alpha p(tx)}{\alpha!}y^\alpha=t^kp^{(tx)}_k(y)\quad\text{for all }y\in\RR^n.\end{equation} Hence $q^{(x)}_k=t^kp^{(tx)}_k$ for all $0\leq k\leq d$. It follows that \begin{equation}\begin{split}\zeta_k(q,x,r)=\max_{j\neq k}\frac{\|q^{(x)}_j\|_{L^\infty (B_r)}}{\|q^{(x)}_k\|_{L^\infty (B_r)}}&=\max_{j\neq k}\frac{t^j\|p^{(tx)}_j\|_{L^\infty (B_r)}}{t^k\|p^{(tx)}_k\|_{L^\infty (B_r)}}\\&=\max_{j\neq k}\frac{\|p^{(tx)}_j\|_{L^\infty (B_{tr})}}{\|p^{(tx)}_k\|_{L^\infty (B_{tr})}}=\zeta_k(p,tx,tr)\end{split}\end{equation} for all $x\in\RR^n$, for all $r>0$ and for all $0\leq k\leq d$.\end{proof}

The magnitude of $\zeta_k(p,x,r)$ identifies homogeneous polynomials and the vanishing of homogeneous parts of polynomials. For example, $p(x)=0$ if and only if $\zeta_0(p,x,r)=\infty$, and $Dp(x)=0$ if and only if $\zeta_1(p,x,r)=\infty$.

\begin{lemma}\label{maglemma} If $p:\RR^n\rightarrow\RR$ is a polynomial of degree $d\geq 1$, $x\in\RR^n$ and $0\leq k\leq d$, then  \begin{enumerate}
\item $\zeta_k(p,x,r)=0$ for all $r>0$ if and only if $p_k^{(x)}=p(\cdot+x)$;
\item $\zeta_k(p,x,r)>0$ for all $r>0$ if and only if $p_k^{(x)}\neq p(\cdot+x)$;
\item $\zeta_k(p,x,r)<\infty$ for all $r>0$ if and only if $p_k^{(x)}\neq 0$; and,
\item $\zeta_k(p,x,r)=\infty$ for all $r>0$ if and only if $p_k^{(x)}=0$.
\end{enumerate}
\end{lemma}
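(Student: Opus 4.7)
The plan is to reduce each of the four equivalences to one elementary observation: a polynomial $q:\RR^n\to\RR$ vanishes identically if and only if $\|q\|_{L^\infty(B_r)}=0$ for some (equivalently, every) $r>0$, because the zero set of a nonzero polynomial is a proper algebraic subvariety and therefore cannot contain an open ball. Combined with the homogeneity identity $\|p_j^{(x)}\|_{L^\infty(B_r)}=r^j\|p_j^{(x)}\|_{L^\infty(B_1)}$, this shows that the vanishing or positivity of each individual norm appearing in the max defining $\zeta_k(p,x,r)$ is independent of $r$. Hence the ``for all $r>0$'' quantifier in each item may freely be replaced by ``for some $r>0$,'' and the four statements become purely algebraic assertions about the $p_j^{(x)}$.

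I would first dispatch (3) and (4), which are contrapositives of one another. The numerator in each ratio defining $\zeta_k$ is automatically finite, since $p_j^{(x)}$ is a polynomial bounded on the compact ball $\overline{B_r}$. Therefore $\zeta_k(p,x,r)<\infty$ reduces to the denominator $\|p_k^{(x)}\|_{L^\infty(B_r)}$ being strictly positive, which by the elementary observation is equivalent to $p_k^{(x)}\neq 0$. There is no $0/0$ pathology to worry about here: because $p$ has degree $d\geq 1$, the top part $p_d^{(x)}=p_d^{(0)}$ is nonzero (top-order derivatives do not depend on the center), so whenever $p_k^{(x)}=0$ the max contains at least one term of the form (positive)/$0$, forcing $\zeta_k=+\infty$.

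I would then handle (1) and (2), again contrapositives. If $\zeta_k(p,x,r)=0$, then by (3) the denominator is strictly positive, so $p_k^{(x)}\neq 0$; and every term in the max vanishes, so $\|p_j^{(x)}\|_{L^\infty(B_r)}=0$ for all $j\neq k$, whence $p_j^{(x)}=0$ for $j\neq k$. The decomposition (\ref{hd0}) then collapses to $p(\cdot+x)=p_k^{(x)}$. Conversely, if $p(\cdot+x)=p_k^{(x)}$, uniqueness of the homogeneous decomposition (in turn a consequence of (\ref{pkxdefn})) forces the remaining $p_j^{(x)}$ to vanish and forces $p_k^{(x)}\neq 0$ because $p$ is nonzero, so $\zeta_k(p,x,r)=0$ for every $r>0$.

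No genuine obstacle arises; this is a bookkeeping lemma. The only mildly delicate point is maintaining the distinction between ``for all $r$'' and ``for some $r$'' in the statement, which is resolved once and for all by the scaling identity above, and keeping track of the $0/0$ convention in the defining ratio, which is handled by the fact that $p_d^{(x)}\neq 0$ always.
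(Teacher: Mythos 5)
Your proof is correct and complete; since the paper leaves this lemma as an exercise in the definition, there is no official argument to compare against. You handle the two delicate points cleanly: the scaling identity $\|p_j^{(x)}\|_{L^\infty(B_r)}=r^j\|p_j^{(x)}\|_{L^\infty(B_1)}$ shows the truth value of each item is independent of $r$, and the observation that $p_d^{(x)}=p_d^{(0)}\neq 0$ (the top homogeneous part is translation-invariant and nonzero because $\deg p=d$) rules out the $0/0$ degeneracy and forces $\zeta_k=\infty$ whenever $p_k^{(x)}=0$ with $k<d$.
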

\begin{proof} We leave this exercise in the definition of $\zeta_k(p,x,r)$ to the reader.\end{proof}

The value of $\zeta_k(p,x,r)$ depends continuously on the coefficients of the polynomial $p$. To make this statement precise, we first make a definition.

\begin{definition} A sequence of polynomials $(p^i)_{i=1}^\infty$ in $\RR^n$ converges \emph{in coefficients} to a polynomial $p$ in $\RR^n$ if $d=\max_i \deg p^i<\infty$ and $D^\alpha p^i(0)\rightarrow D^\alpha p(0)$ for every $|\alpha|\leq d$.\end{definition}

\begin{lemma}\label{zetacts} For every $k\geq 0$, $\zeta_k(p,x,r)$ is jointly continuous in $p$, $x$ and $r$. That is, \begin{equation} \zeta_k(p^i,x_i,r_i)\rightarrow \zeta_k(p,x,r)\end{equation} whenever $p^i\rightarrow p$ in coefficients, $x_i\rightarrow x\in\RR^n$ and $r_i\rightarrow r\in(0,\infty)$.\end{lemma}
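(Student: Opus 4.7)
My plan is to reduce the joint continuity of $\zeta_k$ (as a map to $[0,\infty]$) to the joint continuity of two simpler operations: polynomial evaluation (in both the coefficients of the polynomial and the evaluation point), and the uniform norm of a polynomial on a ball (in both coefficients and radius). The ratio and the finite max in (\ref{zetadef}) are then handled by inspection, with the only mildly delicate point being the extended-real-valued target.

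The first step is to upgrade coefficient convergence $p^i\to p$ to coefficient convergence of each homogeneous part $p_j^{(x_i)}\to p_j^{(x)}$. The key observation is that $D^\alpha p^i(y)$ is a polynomial in $y$ whose coefficients are linear in the coefficients of $p^i$, so the joint continuity of polynomial evaluation yields $D^\alpha p^i(x_i)\to D^\alpha p(x)$ for every multi-index $|\alpha|\leq d=\max_i\deg p^i$. By formula (\ref{pkxdefn}), the coefficients of $p_j^{(x_i)}$ are precisely $D^\alpha p^i(x_i)/\alpha!$ for $|\alpha|=j$, so they converge to those of $p_j^{(x)}$ for every $0\leq j\leq d$. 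Using the homogeneity of $p_j^{(x)}$ to write $\|p_j^{(x_i)}\|_{L^\infty(B_{r_i})}=r_i^j\|p_j^{(x_i)}\|_{L^\infty(B_1)}$, coefficient convergence then gives uniform convergence on the compact set $\overline{B_1}$, so the sup norms on $B_1$ converge; combined with $r_i^j\to r^j$ this yields $\|p_j^{(x_i)}\|_{L^\infty(B_{r_i})}\to\|p_j^{(x)}\|_{L^\infty(B_r)}$ for every $j$.

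The final step handles the ratio by a case split. If $p_k^{(x)}\neq 0$, every denominator in (\ref{zetadef}) is eventually bounded away from $0$, each ratio converges in $[0,\infty)$, and the max over the finitely many $j\neq k$ converges as well. The only wrinkle is the case $p_k^{(x)}=0$, in which $\zeta_k(p,x,r)=\infty$ by Lemma \ref{maglemma}. I would then pick $j\neq k$ with $p_j^{(x)}\neq 0$---which exists since $p\not\equiv 0$---and observe that the corresponding numerator has a positive limit while the denominator tends to $0$, forcing $\zeta_k(p^i,x_i,r_i)\to\infty$. This is the only step that goes beyond pure continuity of polynomial evaluation, and it amounts to a single line.
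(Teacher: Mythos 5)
Your proposal is correct and follows essentially the same line as the paper's proof: pass from coefficient convergence of $p^i$ to convergence of the homogeneous parts $p_j^{(x_i)} \to p_j^{(x)}$ via formula (\ref{pkxdefn}) and continuity of polynomial evaluation, deduce convergence of the sup norms on $B_{r_i}$, and then conclude convergence of the ratios. The only stylistic differences are that you use the homogeneity identity $\|p_j^{(x_i)}\|_{L^\infty(B_{r_i})} = r_i^j\|p_j^{(x_i)}\|_{L^\infty(B_1)}$ to normalize radii, and you spell out the case split on whether $p_k^{(x)}$ vanishes; the paper leaves both of these implicit (covering the $p_k^{(x)}=0$ situation with its short ``$0/0$ never appears'' remark and a separate $k>d$ case), but the underlying argument is the same.
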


\begin{proof} Let $(p^i)_{i=1}^\infty$ be a sequence of polynomials in $\RR^n$ such that $p^i\rightarrow p$ in coefficients to a nonconstant polynomial $p$ and let $d=\max_i\deg p_i<\infty$. There are two cases. If $k>d$, then $p^{i(x_i)}_k=0$ for all $i\geq 1$ and $p_k^{(x)}=0$. Hence \begin{equation}\zeta(p^i,x_i,r_i)=\zeta(p,x,r)=\infty\quad\text{for all }i\geq 1\end{equation} by Lemma \ref{maglemma}. Otherwise $0\leq k\leq d$. Since \begin{equation}D^\alpha p(x)=\sum_{|\beta|\leq d-|\alpha|}\frac{D^{\alpha+\beta}p(0)}{\beta!}x^{\beta}\quad\text{for all }x\in\RR^n\text{ and }|\alpha|\leq d,\end{equation} convergence in coefficients implies that $D^\alpha p^i(x)\rightarrow D^\alpha p(x)$ for all $x\in\RR^n$ and $|\alpha|\leq d$, uniformly on compact subsets of $\RR^n$. From (\ref{pkxdefn}) it follows that $p_k^{i(x_i)}\rightarrow p_k^{(x)}$ uniformly on compact sets  whenever $p^i\rightarrow p$ in coefficients and $x_i\rightarrow x\in\RR^n$. Thus, for every $0\leq k\leq d$, \begin{equation}\|{p}_k^{i(x_i)}\|_{L^\infty(B_{r_i})}\rightarrow \|p_k^{(x)}\|_{L^\infty(B_{r})}\end{equation} whenever $p^i\rightarrow p$ in coefficients, $x_i\rightarrow x\in\RR^n$ and $r_i\rightarrow r\in (0,\infty)$.
We conclude that  \begin{equation}\label{lemon}
\max_{j\neq k}\frac{\|{p}_j^{i(x_i)}\|_{L^\infty(B_{r_i})}}{\|{p}_k^{i(x_i)}\|_{L^\infty(B_{r_i})}}\rightarrow \max_{j\neq k}\frac{\|{p}_j^{(x)}\|_{L^\infty(B_{r})}}{\|{p}_k^{(x)}\|_{L^\infty(B_{r})}}\in[0,\infty].\end{equation} (Note $0/0$ never appears in (\ref{lemon}) because the polynomials $p^i$ and $p$ are not identically zero.) That is, $\zeta_k(p^i,x_i,r_i)\rightarrow \zeta_k(p,x,r)$ whenever $p^i\rightarrow p$ in coefficients, $x_i\rightarrow x\in\RR^n$ and $r_i\rightarrow r\in(0,\infty)$, as desired.
\end{proof}

\begin{remark} If $(p^i)_{i=1}^\infty$ is a sequence of polynomials in $\RR^n$ such that $d=\max_i p^i<\infty$, then $p^i\rightarrow p$ in coefficients if and only if $p^i\rightarrow p$ uniformly on compact sets.\end{remark}

Next we show that the relative size of the linear term of a polynomial decays linearly at any root of the polynomial.

\begin{lemma}\label{lineardecay} If $p:\RR^n\rightarrow\RR$ is a polynomial of degree $d\geq 1$ and $p(x)=0$, then $\zeta_1(p,x,sr)\leq s\zeta_1(p,x,r)$ for all $r>0$ and $s\in(0,1)$.\end{lemma}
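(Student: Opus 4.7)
The plan is to exploit the homogeneity identity $\|p^{(x)}_j\|_{L^\infty(B_{sr})}=s^j\|p^{(x)}_j\|_{L^\infty(B_r)}$, which is immediate from (\ref{hd1}), and to use the hypothesis $p(x)=0$ to eliminate the one problematic term in the maximum defining $\zeta_1$.

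More precisely, I would first write
\begin{equation*}
\zeta_1(p,x,sr)=\max_{j\neq 1}\frac{\|p^{(x)}_j\|_{L^\infty(B_{sr})}}{\|p^{(x)}_1\|_{L^\infty(B_{sr})}}=\max_{j\neq 1}s^{j-1}\,\frac{\|p^{(x)}_j\|_{L^\infty(B_r)}}{\|p^{(x)}_1\|_{L^\infty(B_r)}},
\end{equation*}
where the index $j$ runs over $\{0,2,3,\dots,d\}$. For $j\geq 2$ and $s\in(0,1)$, we have $s^{j-1}\leq s$, so each such term is bounded by $s$ times the corresponding ratio appearing in $\zeta_1(p,x,r)$.

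The one index that needs care is $j=0$, where $s^{j-1}=s^{-1}>1$. This is exactly where the hypothesis $p(x)=0$ enters: by (\ref{pkxdefn}), $p^{(x)}_0(y)=p(x)=0$, so $\|p^{(x)}_0\|_{L^\infty(B_r)}=0$ and the $j=0$ term in the maximum is $0$, independent of $s$. Combining the two cases and taking the maximum over $j\neq 1$ yields $\zeta_1(p,x,sr)\leq s\,\zeta_1(p,x,r)$, using the convention $\infty\cdot s=\infty$ if $p^{(x)}_1=0$.

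There is no real obstacle here; the only subtlety is noting that the potentially bad index $j=0$ is neutralized by the assumption $p(x)=0$, and that all other homogeneous parts contribute a factor of $s^{j-1}\leq s$. The linear decay rate $s$ (rather than some power $s^k$) is sharp precisely because it is set by the smallest admissible exponent $j-1=1$ occurring at $j=2$.
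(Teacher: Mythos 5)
Your proposal is correct and takes essentially the same route as the paper: both exploit the homogeneity identity $\|p^{(x)}_j\|_{L^\infty(B_{sr})}=s^j\|p^{(x)}_j\|_{L^\infty(B_r)}$ and use $p(x)=0$ to dispose of the $j=0$ term, so that each surviving ratio acquires a factor $s^{j-1}\leq s$. The only cosmetic difference is that the paper drops $j=0$ from the maximum at the outset (writing $\max_{j>1}$) while you keep it and note explicitly that it vanishes.
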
 \begin{proof} Suppose $p:\RR^n\rightarrow\RR$ is a polynomial of degree $d\geq 1$. First if $d=1$ and $p(x)=0$, then $p=p^{(x)}_1(\cdot-x)$ and $\zeta_1(p,x,r)=0$ for all $r>0$ by Lemma \ref{maglemma}. Second if $d\geq 2$ and $p(x)=0$, then $p=p^{(x)}_d(\cdot-x)+\dots+p^{(x)}_2(\cdot-x)+p^{(x)}_1(\cdot-x)$. Thus \begin{equation}\begin{split}\zeta_1(p,x,sr)=\max_{j>1}\frac{\|p^{(x)}_j\|_{L^\infty(B_{sr})}}{\|p^{(x)}_1\|_{L^\infty(B_{sr})}}
&=\max_{j>1}\frac{s^j\|p^{(x)}_j\|_{L^\infty(B_{r})}}{s\|p^{(x)}_1\|_{L^\infty(B_{r})}}\\ &\leq \max_{j>1}\frac{s^2\|p^{(x)}_j\|_{L^\infty(B_{r})}}{s\|p^{(x)}_1\|_{L^\infty(B_{r})}}=s\zeta_1(p,x,r)
\end{split}\end{equation} for all $r>0$ and $s\in(0,1)$.\end{proof}

Now let us specialize to harmonic polynomials.

\begin{lemma}\label{hparts} If $h$ is a harmonic polynomial in $\RR^n$ (i.e.\ $\Delta h=0$) of degree $d\geq 1$, then $h^{(x)}_k$ is harmonic for all $0\leq k\leq d$ and $x\in\RR^n$.\end{lemma}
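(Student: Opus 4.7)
The plan is to exploit two facts: the Laplacian is translation-invariant, and $\Delta$ maps homogeneous polynomials of degree $k$ to homogeneous polynomials of degree $k-2$. Together these let us propagate harmonicity from $h$ to each $h^{(x)}_k$.

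First I would note that since $\Delta$ has constant coefficients, the translate $\tilde h(y) := h(y+x)$ is again harmonic on $\RR^n$. By the decomposition (\ref{hd0})--(\ref{pkxdefn}), we have
\begin{equation*}
\tilde h(y)=\sum_{k=0}^{d}h^{(x)}_k(y),
\end{equation*}
where each $h^{(x)}_k$ is homogeneous of degree $k$ in $y$. Applying $\Delta$ term by term gives
\begin{equation*}
0=\Delta\tilde h=\sum_{k=2}^{d}\Delta h^{(x)}_k,
\end{equation*}
and because $\Delta$ lowers the degree of a homogeneous polynomial by exactly two, each summand $\Delta h^{(x)}_k$ is homogeneous of degree $k-2$.

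Next I would invoke the elementary but crucial fact that homogeneous polynomials of distinct degrees are linearly independent in the space of polynomials: if a sum $\sum_j q_j = 0$ with $q_j$ homogeneous of degree $j$ and the $j$'s distinct, then each $q_j=0$. (This follows by comparing both sides of $\sum_j q_j(ty) = 0$ as polynomials in $t$, or equivalently by reading off Taylor coefficients at $0$.) Applying this to the decomposition above forces $\Delta h^{(x)}_k = 0$ for every $2 \leq k \leq d$. The cases $k=0$ and $k=1$ are automatic since $h^{(x)}_0$ is constant and $h^{(x)}_1$ is linear in $y$, so their Laplacians vanish identically.

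There isn't really a hard step here; the only thing to be careful about is separating the homogeneous parts after applying $\Delta$, which is handled by the linear independence of homogeneous polynomials of different degrees. Since $x \in \RR^n$ was arbitrary, this establishes the lemma for all $0 \leq k \leq d$ and $x \in \RR^n$.
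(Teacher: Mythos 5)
Your proof is correct and follows essentially the same argument as the paper: both apply the Laplacian to the Taylor decomposition centered at $x$, observe that $\Delta$ sends a homogeneous polynomial of degree $k$ to one of degree $k-2$, and conclude that each $\Delta h^{(x)}_k$ must vanish by linear independence of homogeneous polynomials of distinct degrees. The only cosmetic difference is that you translate first and then differentiate, which the paper does implicitly.
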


\begin{proof}Suppose that $h$ is a harmonic polynomial of degree $d\geq 1$ and let $x\in\RR^n$. Applying Laplace's operator to (\ref{hd0}) yields \begin{equation}\label{laphd0}0=\Delta h^{(x)}_d + \Delta h^{(x)}_{d-1} + \dots + \Delta h^{(x)}_{2}.\end{equation} Since $\Delta h^{(x)}_k$ is the sum of monomials of degree $k-2$ for each non-zero $h^{(x)}_k$, the right hand side of (\ref{laphd0}) vanishes if and only if $\Delta h^{(x)}_k=0$ for all $0\leq k\leq d$. \end{proof}

\begin{remark} If $h:\RR^n\rightarrow\RR$ is any harmonic polynomial of degree $d\geq 1$, then \begin{equation}\zeta_k(h,x,r)=\max_{j\neq k}\frac{\|h^{(x)}_j\|_{L^\infty(\partial B_r)}}{\|h^{(x)}_k\|_{L^\infty(\partial B_r)}}\end{equation} by Lemma \ref{hparts} and the maximum principle for harmonic functions. \end{remark}

\section{Blow-ups and Convergence of Zero Sets of Polynomials}
\label{SectConvergenceZeroSets}

The following definition formalizes the notion of ``zooming in" on a closed set.

\begin{definition}\label{blowupdefn} Let $\Sigma\subset\RR^n$ be a nonempty closed set. A \emph{(geometric) blow-up $B$ of $\Sigma$ centered at $x\in \Sigma$} is a closed set $B\subset\RR^n$ such that for some sequence $r_i\downarrow 0$, \begin{equation} \lim_{i\rightarrow\infty}\HD\left[\frac{\Sigma-x}{r_i}\cap B_s,B\cap B_s\right]=0\quad\text{ for all } s>0\end{equation} where $B_s=B(0,s)$ denotes the closed ball at the origin with radius $s>0$.\end{definition}

The existence of blow-ups of a non-empty set is guaranteed by the following classical lemma. A proof may be found on page 91 of Rogers \cite{R}.

\begin{lemma}[Blaschke's selection theorem]\label{selectthm} Let $K\subset\RR^n$ be a compact set. If $(A_k)_{k=1}^\infty$ is a sequence of nonempty closed subsets of $K$, then there exists a nonempty closed set $A\subset K$ and a subsequence $(A_{k_j})_{j=1}^\infty$ of $(A_k)_{k=1}^\infty$ such that $\HD[A_{k_j},A]\rightarrow 0$ as $j\rightarrow\infty$.\end{lemma}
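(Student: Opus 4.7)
My plan is to prove this by a direct diagonal-subsequence extraction; abstractly the statement is equivalent to sequential compactness of the space of nonempty closed subsets of $K$ under $\HD$, but I prefer to construct the limit set explicitly. The key input is compactness of $K$. For each integer $m \geq 1$, choose a finite cover $\{B_j^{(m)}\}_{j=1}^{N_m}$ of $K$ by closed balls of radius $1/m$ centered at points of $K$, and associate to each $A_k$ its \emph{profile} at scale $m$,
\begin{equation*}
S_k^{(m)} = \{\, j \in \{1,\dots,N_m\} : A_k \cap B_j^{(m)} \neq \emptyset \,\} \subseteq \{1,\dots,N_m\}.
\end{equation*}
Since $\{1,\dots,N_m\}$ has only finitely many subsets, pigeonhole yields an infinite subsequence on which $S_k^{(m)}$ takes a single value $S^{(m)}$. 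Diagonalizing over $m = 1,2,\dots$ produces a single subsequence $(A_{k_j})_{j=1}^\infty$ such that for every $m$ one has $S_{k_j}^{(m)} = S^{(m)}$ for all $j$ sufficiently large.

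I would then define the candidate limit
\begin{equation*}
A = \bigcap_{m \geq 1} \bigcup_{j \in S^{(m)}} B_j^{(m)},
\end{equation*}
which is automatically closed and contained in $K$. Nonemptiness: fix $j_0 \in S^{(1)}$, choose $y_j \in A_{k_j} \cap B_{j_0}^{(1)}$ for large $j$, and extract a convergent subsequence using compactness of $K$; the profile conditions force the limit to lie in $A$. Hausdorff convergence $\HD[A_{k_j}, A] \to 0$ then reduces to two bounds at scale $1/m$: every $A_{k_j}$ (for $j$ large) lies in $\bigcup_{j \in S^{(m)}} B_j^{(m)}$, hence in the $(1/m)$-neighborhood of $A$; conversely every point of $A$ lies within $2/m$ of $A_{k_j}$ because each $j \in S^{(m)}$ supplies a genuine intersection point $A_{k_j} \cap B_j^{(m)}$.

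The main obstacle I anticipate is ensuring the profiles at successive scales are \emph{compatible}, so that a point of $A$ can be shadowed by $A_{k_j}$ through nested balls of decreasing radius. A generic cover at scale $m+1$ need not refine the cover at scale $m$, which is what the nesting argument requires both for nonemptiness of $A$ and for the ``$A \subset N_{2/m}(A_{k_j})$'' direction of Hausdorff convergence. The cleanest fix is to choose covers compatibly from the start---for instance, at each stage include balls of radius $1/(m+1)$ centered at all previous centers---or, more elegantly, to bypass ball-profiles altogether and diagonalize on a countable dense subset $D \subset K$ by deciding, for each $x \in D$ and each rational $r > 0$, whether $A_{k_j} \cap B(x,r) \neq \emptyset$ along the subsequence. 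Either device gives the required nesting, after which both halves of the Hausdorff estimate follow by routine verification.
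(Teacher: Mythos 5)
Your approach is correct and is essentially the standard direct proof by finite $\varepsilon$-nets plus diagonalization. For comparison, the paper does not give a proof of this lemma at all---it defers to page~91 of Rogers' \emph{Hausdorff measures}---so there is no "paper route" to contrast with yours; I will instead assess the argument on its own terms.

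The compatibility worry you raise at the end is unfounded: no refinement relation between the scale-$m$ and scale-$(m+1)$ covers is needed anywhere, and neither nested balls nor a countable dense subset is required. Write $C_m=\bigcup_{i\in S^{(m)}}B_i^{(m)}$, so that $A=\bigcap_m C_m$, and let $J_m$ be such that $S^{(m)}_{k_j}=S^{(m)}$ for all $j\geq J_m$. Because the scale-$m$ balls cover $K$ and $A_{k_j}$ meets only balls indexed by $S^{(m)}$, one has $A_{k_j}\subseteq C_m$ for all $j\geq J_m$. Consequently $C_1\cap\cdots\cap C_m\supseteq A_{k_{j'}}\neq\emptyset$ whenever $j'\geq\max\{J_1,\dots,J_m\}$, and $A=\bigcap_m(C_1\cap\cdots\cap C_m)$ is nonempty by nested compactness---no shadowing of a point through nested balls is needed. (Also, $A\subseteq K$ is not quite ``automatic'' as you assert, since the balls $B_i^{(m)}$ need not be contained in $K$; but $A\subseteq C_m$ forces $\dist(z,K)\leq 1/m$ for every $z\in A$ and every $m$, hence $A\subseteq\overline K=K$.) The inclusion $A\subset N_{2/m}(A_{k_j})$ is likewise purely a scale-$m$ fact: any $y\in A\subseteq C_m$ lies in some $B_l^{(m)}$ with $l\in S^{(m)}$, and $A_{k_j}\cap B_l^{(m)}\neq\emptyset$ for $j\geq J_m$, so $\dist(y,A_{k_j})\leq 2/m$.

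The one genuine gap in your sketch is the step ``every $A_{k_j}$ lies in $C_m$, hence in the $(1/m)$-neighborhood of $A$.'' Containment in $C_m$ does not by itself bound $\dist(\cdot,A)$, because a priori $A$ might avoid some of the balls $B_i^{(m)}$ with $i\in S^{(m)}$. It does not, and the repair is the same compactness device you used for nonemptiness: fix $i\in S^{(m)}$, pick $z_j\in A_{k_j}\cap B_i^{(m)}$ for $j\geq J_m$, and extract $z_{j_l}\to z$. Then $z\in B_i^{(m)}$ since the ball is closed, and for every $m'$ one has $z_{j_l}\in A_{k_{j_l}}\subseteq C_{m'}$ once $j_l\geq J_{m'}$, so $z\in C_{m'}$; hence $z\in A\cap B_i^{(m)}$. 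Thus every $x\in A_{k_j}$ ($j\geq J_m$) lies within $2/m$ of a point of $A$, and combined with the previous paragraph, $\HD[A_{k_j},A]\leq 2/m$ for $j\geq J_m$, which is the desired convergence. In short: the argument closes with arbitrary finite $1/m$-covers at each scale, and the only thing to add to your write-up is the ``$A$ meets every ball in $S^{(m)}$'' step.
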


In this section, we shall identify the blow-ups of the zero set of a harmonic polynomial. But first we discuss the relationship between convergence of polynomials in coefficients and convergence of their zero sets in the Hausdorff distance. To start, we give an example which shows how and where issues can arise. Below $B(x,r)$ denotes the \emph{closed} ball with center $x\in\RR^n$ and radius $r>0$.

\begin{example} Let $h(x,y)=xy$ and for each $i\geq 1$ let $h^i(x,y)= h(x+1/i,y)=xy+y/i$. Then the polynomials $h$ and $h^i$ ($i\geq 1$) are harmonic and $h^i\rightarrow h$ in coefficients. However, we claim that there is a closed ball $B$ such that $\Sigma_{h^i}\cap B\neq\emptyset$ for all $i\geq 1$ and $\Sigma_h\cap B\neq\emptyset$ but $\Sigma_{h^i}\cap B$ does not converge to $\Sigma_h\cap B$ in the Hausdorff distance (see Figure 3.1). Indeed consider $B=B((1,1/2),1)$. Then $\Sigma_{h^i}\cap B=[1-c,1+c]\times \{0\}$, $c=\sqrt{3}/2$ is a fixed line segment for all $i\geq 1$, but $ \Sigma_h\cap B=([1-c,1+c]\times\{0\})\cup\{(0,1/2)\}$ consists of the line segment together with an additional point of $\partial B$. Thus convergence in coefficients does not imply (local) convergence of the zero sets in the Hausdorff distance in general. We remark that this extra point lies on $\partial B$ and is isolated in $\Sigma_h\cap B$, even though it is not isolated in $\Sigma_h$.
\end{example}

\begin{figure}[t] \begin{center}\includegraphics[width=.6\textwidth]{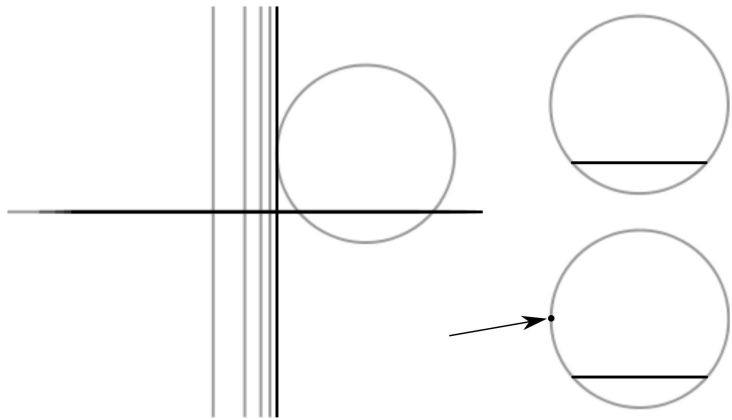}\end{center}\caption{Convergence in coefficients versus convergence of zero sets}\end{figure}

\begin{lemma}\label{hpfconverge} If $(h_i)_{i=1}^\infty$ is a sequence of harmonic polynomials and $h^i\rightarrow h$ in coefficients, then $h$ is a harmonic polynomial. Moreover, if $h$ is nonconstant and if $\Sigma_h\cap \interior B\neq\emptyset$ for some closed ball $B$, then $\Sigma_{h^i}\cap B\neq\emptyset$ for all $i\geq i_0$.  Furthermore, if $\Sigma_{h^i}\cap B$ converges in the Hausdorff distance to a closed set $F\subset B$, then $F\subset\Sigma_h\cap B$ and $F\cap \interior B=\Sigma_h\cap \interior B$. \end{lemma}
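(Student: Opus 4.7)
I would first dispatch the harmonicity claim, which is immediate from the definition of convergence in coefficients: since $D^\alpha h^i(0)\to D^\alpha h(0)$ for each multi-index $\alpha$, the coefficients of $\Delta h^i$ converge to those of $\Delta h$. Because $\Delta h^i=0$ for every $i$, every coefficient of $\Delta h$ vanishes, and $h$ is harmonic.

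For the two geometric assertions the key tool will be the following standard consequence of the strong maximum principle: if $h$ is nonconstant and harmonic, then in every neighborhood of any $x_0\in\Sigma_h$ one can find points $x_+$ and $x_-$ with $h(x_+)>0>h(x_-)$; for otherwise $h$ would attain a local extremum at $x_0$ and would have to be constant. Combined with the fact that convergence in coefficients is equivalent to uniform convergence on compact subsets of $\RR^n$ (recorded in the remark following Lemma \ref{zetacts}), this lets me manufacture zeros of $h^i$ near any zero of $h$ lying in $\interior B$ via the intermediate value theorem along a short line segment. Concretely, for the second assertion I would fix $x_0\in\Sigma_h\cap\interior B$, choose $\rho>0$ with $B(x_0,\rho)\subset\interior B$, pick $x_\pm\in B(x_0,\rho)$ with opposite signs of $h$, pass to large $i$ so that $h^i$ shares the same sign pattern at $x_\pm$, and apply the IVT along $[x_-,x_+]\subset B$ to produce a point of $\Sigma_{h^i}\cap B$.

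For the third assertion, suppose $\Sigma_{h^i}\cap B\to F$ in the Hausdorff distance. The inclusion $F\subset\Sigma_h\cap B$ is straightforward: given $y\in F$, pick $y_i\in\Sigma_{h^i}\cap B$ with $y_i\to y$ and use uniform convergence to conclude $h(y)=\lim h^i(y_i)=0$, while $y\in B$ because $B$ is closed. The containment $F\cap\interior B\supset\Sigma_h\cap\interior B$ is the only remaining point, and I would obtain it by iterating the sign-change/IVT construction: for each small $\rho>0$ with $B(x_0,\rho)\subset\interior B$, the IVT produces $y_i\in\Sigma_{h^i}\cap B(x_0,\rho)$ for all large $i$, and Hausdorff convergence of $\Sigma_{h^i}\cap B$ to $F$ then forces $\dist(x_0,F)\leq\rho$; letting $\rho\to 0$ and using closedness of $F$ yields $x_0\in F$.

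The subtlety to keep in mind is precisely what the example preceding the lemma warns against: Hausdorff convergence can deposit isolated points of $F$ on $\partial B$ that are not in $\Sigma_h$, so the equality $F=\Sigma_h\cap B$ is not available and must be replaced by equality inside $\interior B$. This is also why the sign-change/IVT construction must be kept strictly inside $B$, where one has genuine room to apply the intermediate value theorem. Beyond this bookkeeping I do not foresee a real obstacle.
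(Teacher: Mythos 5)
Your proof is correct and follows essentially the same structure as the paper's: locate a sign change of $h$ arbitrarily close to any zero in $\interior B$, propagate the signs to $h^i$ via locally uniform convergence, extract zeros of $h^i$ by the intermediate value theorem along a segment, and iterate over shrinking neighborhoods to force $\Sigma_h\cap\interior B\subset F$. The only differences are cosmetic---the paper derives the sign change from the mean value property (the integral of $h$ over a small ball centered at the zero vanishes) while you invoke the strong maximum principle, and the paper deduces harmonicity of $h$ from locally uniform convergence of harmonic functions while you check that the coefficients of $\Delta h$ vanish; both routes are standard and equivalent.
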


\begin{proof} Suppose that $h^i:\RR^n\rightarrow\RR$ is a harmonic polynomial for each $i\geq 1$ and $h^i\rightarrow h$ in coefficients. Then $h^i\rightarrow h$ uniformly on compact subsets of $\RR^n$. Hence $h$ is harmonic.

Now suppose that, in addition, $h$ is nonconstant and $\Sigma_h\cap \interior B\neq\emptyset$ for some closed ball $B$. Since $\Sigma_h\cap\interior B\neq\emptyset$, we can find a ball $B'\subset B$ whose center lies in $\Sigma_h\cap B$.  By the mean value property for harmonic functions, \begin{equation} \int_{B'} h(y)dy=0.\end{equation} Because $h$ is not identically zero, there exists $x_+,x_-\in B'$ such that $h(x_+)>0$ and $h(x_-)<0$. Hence, since $h^{i}(x_\pm)\rightarrow h(x_\pm)$, we conclude that $h^{i}(x_+)>0$ and $h^{i}(x_-)<0$ for all sufficiently large $i$, as well. By the intermediate value theorem, $h^{i}$ must vanish somewhere in $B'\subset B$ for all large $i$. That is, $\Sigma_{h^i}\cap B\neq\emptyset$ for all sufficiently large $i$.

Finally suppose that $\Sigma_{h^i}\cap B\rightarrow F$ in the Hausdorff distance for some closed set $F$. Recall that we want to show $F\subset\Sigma_h\cap B$ and $F\cap\interior B=\Sigma_h\cap\interior B$. On one hand, for every $y\in F$ there exists $y_i\in B$ such that $h^{i}(y_i)=0$ and $y_i\rightarrow y$. To show $h(y)=0$, consider \begin{equation}h(y)=h(y)-h(y_i)+h(y_i)-h^{i}(y_i)+h^{i}(y_i)
\end{equation} Since $h$ is continuous and $y_i\rightarrow y$, we get $\limsup_{i\rightarrow\infty} |h(y)-h(y_i)|=0$. Because $h^{i}\rightarrow h$ uniformly on $B$, we conclude $\limsup_{i\rightarrow\infty} |h(y_i)-h^{i}(y_i)|\leq \limsup_{i\rightarrow\infty} \|h-h^{i}\|_{L^\infty(B)}=0$. Hence $h(y)=0$ and $F\subset \Sigma_h\cap B$. In particular, $F\cap \interior B\subset \Sigma_h\cap\interior B$.

On the other hand, suppose that $y\in \Sigma_h\cap \interior B$. Choose $m\geq 1$ such that $B(y,1/m)\subset \interior B$. Since $h$ is not identically zero, we can use the mean value property of $h$ as above to show that there exist $y^m_+,y^m_-\in B(y,1/m)$ such that $h(y^m_+)>0$ and $h(y^m_-)<0$. But $h^{i}(y^m_\pm)\rightarrow h(y^m_\pm)$, so there is $i_0$ such that $h^{i}(y^m_+)> 0$ and $h^{i}(y^m_-)<0$ for all $i\geq i_0$. By continuity, we conclude that for each $i\geq i_0$ there exists $y^{m,i}_0\in B(y,1/m)$ such that $h^{i}(y^{m,i}_0)=0$. Thus \begin{equation}\begin{split}
\dist (y,F)&\leq \limsup_{i\rightarrow\infty}\, \dist(y,\Sigma_{h^{i}}\cap B)+\HD[\Sigma_{h^{i}}\cap B, F]\\
&\leq \frac{1}{m}+\limsup_{k\rightarrow\infty}\, \HD[\Sigma_{h^{i}}\cap B, F]= \frac{1}m.\end{split}\end{equation} Letting $m\rightarrow\infty$ yields $\dist(y,F)=0$. But $F$ is closed, so $y\in F$ and $\Sigma_h\cap\interior B\subset F\cap \interior B$. Therefore, $F\cap \interior B=\Sigma_h\cap \interior B$, as desired. \end{proof}

\begin{corollary}\label{hpzeroconverge} Suppose that $(h^i)_{i=1}^\infty$ is a sequence of harmonic polynomials and $h^i\rightarrow h$ in coefficients. If $h$ is nonconstant and $B$ is a closed ball such that $\Sigma_h\cap \interior B\neq\emptyset$ and \begin{equation}\label{notisolated} \Sigma_h\cap B=\overline{\Sigma_h\cap\interior B},\end{equation} then $\Sigma_{h^i}\cap B\rightarrow\Sigma_h\cap B$ in the Hausdorff distance.  \end{corollary}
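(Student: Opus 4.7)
The plan is to combine Lemma \ref{hpfconverge}, Blaschke's selection theorem (Lemma \ref{selectthm}), and the ``no isolated boundary points'' hypothesis (\ref{notisolated}) via a standard subsequence-of-subsequence argument. The essential point is that any Hausdorff-subsequential limit $F$ of $\Sigma_{h^i}\cap B$ is already pinned down by Lemma \ref{hpfconverge} up to what happens on $\partial B$, and hypothesis (\ref{notisolated}) prevents $F$ from losing any points of $\Sigma_h\cap B$ that sit on $\partial B$.

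First I would observe that by Lemma \ref{hpfconverge} (applied to the nonconstant harmonic $h$ with $\Sigma_h\cap\interior B\neq\emptyset$) the sets $\Sigma_{h^i}\cap B$ are nonempty closed subsets of the compact ball $B$ for all $i$ sufficiently large, so we may discard finitely many terms and apply Lemma \ref{selectthm}. Given any subsequence, Blaschke's theorem produces a further subsequence $\Sigma_{h^{i_j}}\cap B\to F$ in the Hausdorff distance for some nonempty closed $F\subset B$. By Lemma \ref{hpfconverge}, this limit satisfies
\begin{equation}
F\subset \Sigma_h\cap B \quad\text{and}\quad F\cap \interior B=\Sigma_h\cap\interior B.
\end{equation}

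Next I would use hypothesis (\ref{notisolated}) to upgrade this to $F=\Sigma_h\cap B$. Since $F$ is closed and contains $F\cap\interior B=\Sigma_h\cap\interior B$, taking closures gives
\begin{equation}
\Sigma_h\cap B=\overline{\Sigma_h\cap\interior B}\subset \overline{F}=F,
\end{equation}
and combined with $F\subset\Sigma_h\cap B$ we conclude $F=\Sigma_h\cap B$.

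Finally, to promote convergence along an arbitrary subsequence to convergence of the full sequence, I would argue by contradiction in the standard way: if $\Sigma_{h^i}\cap B\not\to \Sigma_h\cap B$, then there exist $\varepsilon>0$ and a subsequence with $\HD[\Sigma_{h^{i_j}}\cap B,\Sigma_h\cap B]\geq \varepsilon$ for all $j$; applying Blaschke to this subsequence and the two steps above produces a further subsequence whose Hausdorff limit is exactly $\Sigma_h\cap B$, contradicting the uniform lower bound. No step looks genuinely hard since Lemma \ref{hpfconverge} already does the analytic work; the only subtlety to be careful about is not confusing $F\cap\interior B$ with $F$ itself on $\partial B$, which is precisely what (\ref{notisolated}) is designed to handle.
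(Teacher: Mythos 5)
Your proposal is correct and follows essentially the same route as the paper: extract a Hausdorff-convergent subsequence via Blaschke, identify the limit using Lemma \ref{hpfconverge} together with hypothesis (\ref{notisolated}), and then upgrade subsequential convergence to full convergence by the standard subsequence-of-subsequences argument. The only cosmetic difference is that you spell out the final upgrade as an explicit proof by contradiction, whereas the paper simply invokes the principle directly.
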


\begin{proof} Let $(h^i)_{i=1}^\infty$ be any sequence of harmonic polynomials in $\RR^n$ such that $h^i\rightarrow h$ in coefficients to a nonconstant harmonic polynomial $h$. Suppose that $B$ is a closed ball such that $\Sigma_h\cap\interior B\neq\emptyset$ and such that (\ref{notisolated}) holds. By Lemma \ref{hpfconverge}, $\Sigma_{h^i}\cap B\neq\emptyset$ for all sufficiently large $i$. Pick an arbitrary subsequence $(h^{ij})_{j=1}^\infty$ of $(h^i)_{i=1}^\infty$. By Blaschke's selection theorem, we can find a further subsequence $(h^{ijk})_{k=1}^\infty$ of $(h^{ij})_{j=1}^\infty$ and a nonempty closed set $F\subset B$ such that $\lim_{k\rightarrow\infty}\HD[\Sigma_{h^{ijk}}\cap B,F]=0$.  By Lemma \ref{hpfconverge}, $F\subset \Sigma_h\cap B$ and $F\cap\interior B=\Sigma_h\cap\interior B$. Thus, since $B$ satisfies (\ref{notisolated}) and $F$ is closed, \begin{equation}\Sigma_h\cap B=\overline{\Sigma_h\cap \interior B}= \overline{F\cap \interior B}\subset \overline{F}=F\end{equation} This shows $F=\Sigma_h\cap B$.
We have proved every subsequence $(h^{ij})_{j=1}^\infty$ of $(h^i)_{i=1}^\infty$ has a further subsequence $(h^{ijk})_{k=1}^\infty$ such that $\lim_{k\rightarrow\infty}\HD[\Sigma_{h^{ijk}}\cap B,\Sigma_h\cap B]=0$. Therefore, the original sequence $\Sigma_{h^i}\cap B$ also converges to $\Sigma_h\cap B$ in the Hausdorff distance. \end{proof}

\begin{corollary}\label{hptohomog} Suppose that $h:\RR^n\rightarrow\RR$ is a harmonic polynomial of degree $d\geq 1$, and let $x\in\Sigma_h$. If $h(y)=h^{(x)}_d(y-x)+h^{(x)}_{d-1}(y-x)+\dots+h^{(x)}_j(y-x)$ where $h_j^{(x)}\neq 0$, then the unique blow-up of $\Sigma_h$ at $x$ is the zero set $\Sigma_{h^{(x)}_j}$ of $h^{(x)}_j$. That is, \begin{equation}\lim_{r_i\rightarrow 0}\HD\left[\frac{\Sigma_h-x}{r_i}\cap B_s,\Sigma_{h^{(x)}_j}\cap B_s\right]=0\quad\text{ for all }s>0.\end{equation}\end{corollary}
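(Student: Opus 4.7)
The plan is to reduce the statement to Corollary \ref{hpzeroconverge} by rescaling $h$ about $x$ so that the lowest-order nonzero homogeneous part becomes the limit, and then verifying that the limit polynomial satisfies the geometric nondegeneracy hypothesis (\ref{notisolated}) on every ball $B_s$ using the fact that the zero set of a homogeneous polynomial is a cone with vertex $0$.

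Concretely, fix an arbitrary sequence $r_i\downarrow 0$ and define the rescaled harmonic polynomials
\begin{equation*}
\tilde h^i(y)=r_i^{-j}\,h(x+r_i y).
\end{equation*}
Using the decomposition $h(x+r_iy)=\sum_{k=j}^{d} h_k^{(x)}(r_i y)=\sum_{k=j}^{d} r_i^k h_k^{(x)}(y)$ (and $h_0^{(x)}=h(x)=0$, which forces $j\geq 1$), we obtain
\begin{equation*}
\tilde h^i(y)=h_j^{(x)}(y)+\sum_{k=j+1}^{d} r_i^{k-j}\, h_k^{(x)}(y),
\end{equation*}
so $\tilde h^i\to h_j^{(x)}$ in coefficients as $i\to\infty$. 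Each $\tilde h^i$ is harmonic (translation and dilation preserve harmonicity), and the limit $h_j^{(x)}$ is nonconstant since $h_j^{(x)}\neq 0$ has degree $j\geq 1$. Because $\tilde h^i$ differs from $h(x+r_i\cdot)$ only by the nonzero factor $r_i^{-j}$, the zero sets satisfy $\Sigma_{\tilde h^i}=(\Sigma_h-x)/r_i$.

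To apply Corollary \ref{hpzeroconverge} with the limit $h_j^{(x)}$ and the closed ball $B=B_s$ (for arbitrary $s>0$), I must check the two hypotheses. First, $0\in\Sigma_{h_j^{(x)}}\cap\interior B_s$, since $h_j^{(x)}(0)=0$ by homogeneity. Second, I claim $\Sigma_{h_j^{(x)}}\cap B_s=\overline{\Sigma_{h_j^{(x)}}\cap\interior B_s}$: indeed $\Sigma_{h_j^{(x)}}$ is a cone with vertex at $0$ by homogeneity, so for any $y\in\Sigma_{h_j^{(x)}}\cap B_s$, either $y=0\in\interior B_s$, or else the points $ty$ with $t\in(0,1)$ lie in $\Sigma_{h_j^{(x)}}\cap\interior B_s$ and converge to $y$ as $t\uparrow 1$. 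Corollary \ref{hpzeroconverge} then yields
\begin{equation*}
\HD\!\left[\frac{\Sigma_h-x}{r_i}\cap B_s,\ \Sigma_{h_j^{(x)}}\cap B_s\right]=\HD\bigl[\Sigma_{\tilde h^i}\cap B_s,\ \Sigma_{h_j^{(x)}}\cap B_s\bigr]\to 0
\end{equation*}
for every $s>0$. Since the limit $\Sigma_{h_j^{(x)}}$ is independent of the choice of sequence $r_i\downarrow 0$, the blow-up of $\Sigma_h$ at $x$ exists and is unique.

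There is no real obstacle here: the scaling computation and the convergence in coefficients are immediate, and the cone property of $\Sigma_{h_j^{(x)}}$ is exactly what makes the ``no isolated boundary points'' condition (\ref{notisolated}) automatic, so all the technical work has been absorbed into Corollary \ref{hpzeroconverge}. The only point that deserves explicit verification is that $j\geq 1$, which ensures that $h_j^{(x)}$ is nonconstant and that the blow-up really does pass through the origin.
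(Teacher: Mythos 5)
Your proof is correct and follows exactly the paper's argument: rescale $h$ about $x$ by $r_i^{-j}$ so the resulting harmonic polynomials converge in coefficients to $h_j^{(x)}$, check that the homogeneous (cone) structure of $\Sigma_{h_j^{(x)}}$ gives condition (\ref{notisolated}), and apply Corollary \ref{hpzeroconverge}. The only difference is that you spell out the cone argument and the observation $j\geq 1$, which the paper leaves implicit.
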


\begin{proof} Suppose that $h(y)=h_d^{(x)}(y-x)+\dots+h_j^{(x)}(y-x)$ is a harmonic polynomial in $\RR^n$ with $j\geq 1$ and $h^{(x)}_j\neq 0$. Given $r_i\downarrow 0$, define $h^i(y)=r_i^{-j}h(x+r_iy)$ for all $y\in\RR^n$. Then $h^i:\RR^n\rightarrow\RR$ is a harmonic polynomial and $r_i^{-1}(\Sigma_h-x)=\Sigma_{h^i}$. Moreover, \begin{equation}\label{elephant1} h^i(y)=r_i^{-j}\left(h^{(x)}_d(r_iy)+\dots+h^{(x)}_j(r_iy)\right)=r^{d-j}_ih^{(x)}_d(y)+\dots+ h^{(x)}_j(y).\end{equation} Since $r_i\rightarrow 0$ as $i\rightarrow\infty$, $h^i\rightarrow h^{(x)}_j$ in coefficients. Because $h_j^{(x)}$ is homogeneous, $\Sigma_{h_j^{(x)}}\cap B_s$ satisfies (\ref{notisolated}). Thus the claim follows immediately from Corollary \ref{hpzeroconverge}.\end{proof}

\begin{remark} The proofs of Lemma \ref{hpfconverge} and Corollary \ref{hpzeroconverge} did not use the full strength of the harmonic property of $h$. Instead we only needed to assume that $h_i$ are polynomials, $h_i\rightarrow h$ in coefficients, and $h$ is a nonconstant polynomial such that for all $x\in\Sigma_h$ and for all $r>0$ there exist $x_+,x_- \in B(x,r)$ such that $h(x_+)>0$ and $h(x_-)<0$.
\end{remark}

\section{Local Flatness of Zero Sets of Polynomials}
\label{SectFlatnessZeroSets}

The local flatness of the zero set $\Sigma_p$ of a polynomial at a root $x$ is controlled from above by the relative size $\zeta_1(p,x,r)$ of the linear term $p^{(x)}_1$.

\begin{lemma}\label{zetaimpliesflat} If $p:\RR^n\rightarrow\RR$ is a polynomial of degree $d\geq 1$ such that $p(x)=0$, then $\theta_{\Sigma_p}(x,r)\leq C_d\zeta_1(p,x,r)$ for all $r>0$. Explicitly, $C_d=\sqrt{2}(d-1)$.\end{lemma}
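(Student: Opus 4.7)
The plan is to take the hyperplane $H = \{z : p^{(x)}_1(z-x) = 0\}$ determined by the linear part of $p$ at $x$ as the candidate minimizer in the definition of $\theta_{\Sigma_p}(x,r)$, and to estimate both one-sided Hausdorff distances between $\Sigma_p \cap B(x,r)$ and $H \cap B(x,r)$. Translating, I may assume $x = 0$. The degenerate cases are immediate: if $d = 1$ then $\Sigma_p = H$ and $C_1 = 0$, while if $p^{(0)}_1 = 0$ then Lemma \ref{maglemma} gives $\zeta_1(p,0,r) = \infty$, rendering the inequality vacuous. So assume $d \geq 2$ and $L := p^{(0)}_1 \neq 0$, write $p = L + q$ with $q = \sum_{j=2}^d p^{(0)}_j$, and set $\nu := \nabla L/|\nabla L|$, $\zeta := \zeta_1(p,0,r)$, and $T := (d-1)\zeta r$. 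Since $\|L\|_{L^\infty(B_r)} = r|\nabla L|$, the definition of $\zeta_1$ yields the key estimate
$$\|q\|_{L^\infty(B_r)} \leq \sum_{j=2}^d \|p^{(0)}_j\|_{L^\infty(B_r)} \leq (d-1)\zeta\,\|L\|_{L^\infty(B_r)} = T|\nabla L|.$$

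For the easier direction, given $z \in \Sigma_p \cap B(0,r)$ I would decompose $z = y + t\nu$ with $y \in H$. The equation $p(z) = 0$ then reads $t|\nabla L| = -q(z)$, so $|t| \leq T$, and since $|y| \leq |z| \leq r$ by Pythagoras, $\dist(z, H \cap B(0,r)) \leq T$. For the reverse direction, given $y \in H \cap B(0,r)$ I plan to apply the intermediate value theorem to the continuous function $f(t) := p(y + t\nu) = t|\nabla L| + q(y + t\nu)$. When $|y|^2 + T^2 \leq r^2$, the segment $\{y + t\nu : |t| \leq T\}$ lies inside $B(0,r)$, so the bound on $q$ forces $f(T) \geq 0 \geq f(-T)$, and IVT produces a root of $p$ in $B(0,r)$ at distance at most $T$ from $y$.

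The main obstacle is handling $y \in H \cap B(0,r)$ with $|y|^2 > r^2 - T^2$, where the segment above no longer fits inside $B(0,r)$ and the straightforward IVT step fails. In the regime $T \geq r/\sqrt{2}$ the bound is trivial: $0 \in \Sigma_p \cap B(0,r)$ and $|y| \leq r \leq \sqrt{2}T$. Otherwise $T < r/\sqrt{2}$, and I would retreat along the radial direction from $y$ to the point $y' := (\sqrt{r^2 - T^2}/|y|)\,y \in H$, which satisfies $|y'|^2 + T^2 = r^2$. The previous IVT step applied at $y'$ produces a zero $z \in \Sigma_p \cap B(0,r)$ with $|z - y'| \leq T$. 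The elementary inequality $r - \sqrt{r^2 - T^2} \leq (\sqrt{2} - 1)T$ valid whenever $T \leq r/\sqrt{2}$, verified by squaring the equivalent form $\sqrt{r^2 - T^2} \geq r - (\sqrt{2}-1)T$, then gives $|y - y'| = |y| - \sqrt{r^2-T^2} \leq (\sqrt{2}-1)T$, whence $|z - y| \leq \sqrt{2}T = \sqrt{2}(d-1)\zeta r$. Combining the two one-sided estimates yields $\theta_{\Sigma_p}(0,r) \leq \sqrt{2}(d-1)\zeta$, establishing the lemma with $C_d = \sqrt{2}(d-1)$.
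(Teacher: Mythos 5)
Your proof is correct and follows the same strategy as the paper: take the kernel of the linear part $p_1^{(x)}$ as the approximating hyperplane, show $\Sigma_p\cap B(x,r)$ lies in a strip of half-width $T=(d-1)\zeta r$ about that hyperplane, and for the reverse direction find a root of $p$ near each $y\in L\cap B_r$ by an intermediate value argument inside the ball. The only cosmetic difference is the boundary geometry—you retreat radially to $y'$ and then move normally to the hyperplane (a two-leg path), whereas the paper walks along the shortest segment from $y$ directly to the shifted planes $(L\pm\delta r e)\cap B_r$—but both computations produce the same constant $\sqrt{2}(d-1)$.
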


\begin{proof} The claim is trivial for polynomials of degree 1. Let $p:\RR^n\rightarrow\RR$ be any polynomial of degree $d\geq 2$, let $x\in\Sigma_p$ and let $r>0$. For the proof we may assume that \begin{equation}\label{zebra1}\sqrt{2}(d-1)\zeta_1(p,x,r)\leq 1,\end{equation} since the bound $\theta_{\Sigma_p}(x,r)\leq 1$ is always true. Because $\zeta_1(p,x,r)<\infty$, we know that $p_1^{(x)}\neq 0$, by Lemma \ref{maglemma}. Hence $L=\{p_1^{(x)}=0\}\in G(n,n-1)$ is an $(n-1)$-dimensional plane through the origin. Let $e$ be the unique unit normal vector to $L$ at $0$ such that $p_1^{(x)}(e)>0$, and set $\delta:=(d-1)\zeta_1(p,x,r)$. If $y\in L$ and $t>\delta$ satisfy $|y+tre|\leq r$, then \begin{equation}\begin{split} p(x+y+t re)&=p_d^{(x)}(y+t r e)+\dots+ p_1^{(x)}(y+t r e)\\
&\geq p_1^{(x)}(y+t r e)- \|p_d^{(x)}\|_{L^\infty(B_r)}-\dots-\|p_2^{(x)}\|_{L^\infty(B_r)}\\
&\geq p_1^{(x)}(y+t r e)-(d-1)\zeta_1(p,x,r)\|p_1^{(x)}\|_{L^\infty(B_r)}\\
&= t\|p_1^{(x)}\|_{L^\infty(B_r)}-\delta\|p_1^{(x)}\|_{L^\infty(B_r)}>0.\end{split}\end{equation} Similarly, $p(x+y+tre)<0$ when $|y+tre|\leq r$ and  $t< -\delta$. Hence every root of $p$ in $B(x,r)$ lies in the strip $\{x+y+tre:y\in L\text{ and } |t|\leq \delta\}$. Thus $\dist(z,x+L)\leq \delta r$ for all $z\in \Sigma_p\cap B(x,r)$.

On the other hand, suppose that $y\in L\cap B_r$. Then we can connect $y$ by two line segments $\ell_\pm=[y,z_\pm]$ to $(L\pm\delta r e)\cap B_r$ of minimal length (see Figure 4.1). Since $p(x+z_+)\geq 0$ and $p(x+z_-)\leq 0$, by continuity $p(x+z_0)$ must vanish at some point $z_0\in\ell_+\cup\ell_-$. Hence $\dist(x+y,\Sigma_p\cap B(x,r))$ is bounded above by the length of $\ell_\pm$. This is a geometric constant, which is at worst $\sqrt{2}r(1-\sqrt{1-\delta^2})^{1/2}$. (To compute this, notice the length of $\ell_\pm$ is at worst the distance of $(r,\overline{0},0)$ to $(tr,\overline{0},\delta r)$ where $t^2+\delta^2=1$.) Since $0\leq \delta\leq 1$, it follows that  $1-\delta^2\leq \sqrt{1-\delta^2}$ and the length of $\ell_\pm$ is bounded above by $\sqrt{2}r(1-(1-\delta^2))^{1/2}=\sqrt{2}\delta r$.
Thus, $\dist(x+y,\Sigma_p\cap B(x,r))\leq \sqrt{2}\delta r$ for every $x+y\in x+L$. Therefore, we conclude that $\theta_{\Sigma_p}(x,r)\leq \sqrt{2}\delta=\sqrt{2}(d-1)\zeta_1(p,x,r)$, as desired.\end{proof}

\begin{figure}\label{plemmafig}\begin{center}\includegraphics[width=.4\textwidth]{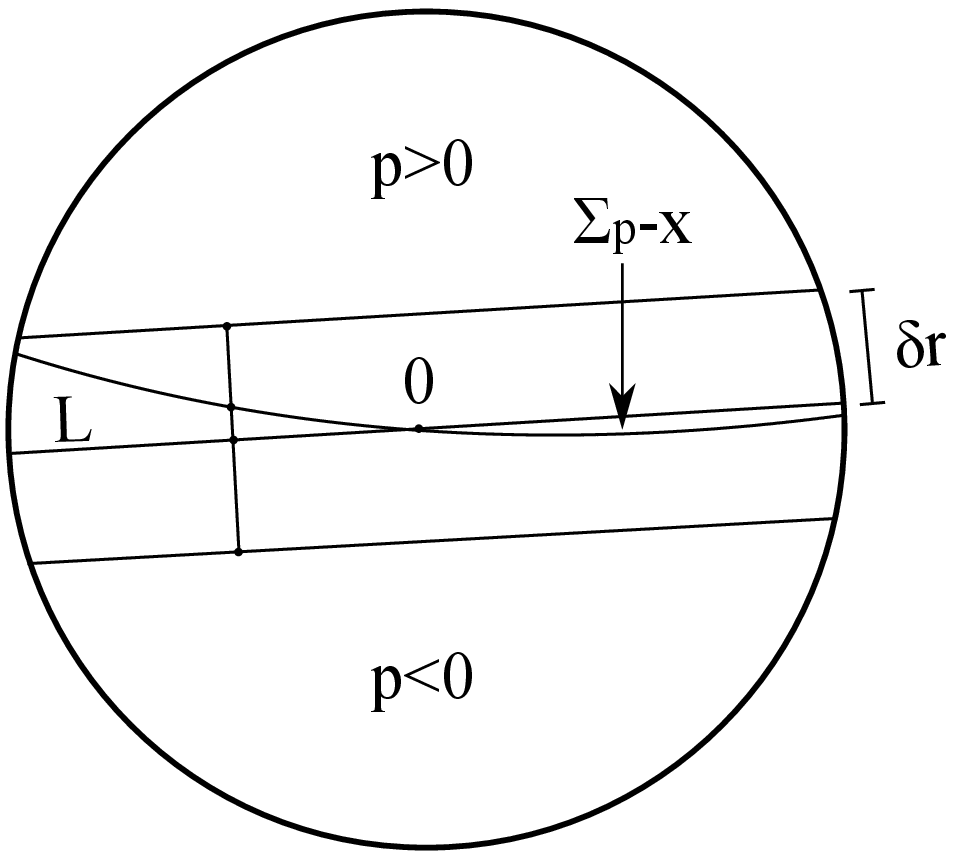}\end{center}\caption{Proof of Lemma \ref{zetaimpliesflat}}\end{figure}

The converse of Lemma \ref{zetaimpliesflat} does not hold in general. Indeed if $p(x,y)=x^4+y^4-y^2$, then $\zeta_1(p,0,r)=\infty$ for all $r>0$ even though $\lim_{r\rightarrow 0}\theta_{\Sigma_p}(0,r)=0$.
Nevertheless we can establish a converse to Lemma \ref{zetaimpliesflat} for harmonic polynomials! As an intermediate step, we first consider homogeneous harmonic polynomials. The following auxiliary estimates for spherical harmonics play a key role.

\begin{definition} A \emph{spherical harmonic} $h:S^{n-1}\rightarrow\RR$ of \emph{degree} $k$ is the restriction of a homogeneous harmonic polynomial $h:\RR^n\rightarrow\RR$ of degree $k$ to the unit sphere.\end{definition}

\begin{remark}\label{sphextrm} If $h:S^{n-1}\rightarrow\RR$ is a spherical harmonic, then there may exist distinct polynomials $p:\RR^n\rightarrow\RR$ and $q:\RR^n\rightarrow\RR$ such that $p|_{S^{n-1}}=q|_{S^{n-1}}=h$. For instance, the polynomials $p(x)=1$ and $q(x)=|x|^2=x_1^2+\dots+x_n^2$ agree on $S^{n-1}$. Nevertheless, there always exists a unique (homogeneous) harmonic polynomial $\tilde h:\RR^n\rightarrow\RR$ such that $\tilde h|_{S^{n-1}}=h$.\end{remark}

Using well-known local estimates for the derivatives of harmonic functions, one can prove that uniformly bounded spherical harmonics of degree $k$ have uniform Lipschitz constant.

\begin{proposition}[\cite{Badger1} Proposition 3.2]\label{spharmlip} For every $n\geq 2$ and $k\geq 1$ there exists a constant $A_{n,k}>1$ such that for every spherical harmonic $h:S^{n-1}\rightarrow\RR$ of degree $k$, \begin{equation}\label{beachball1}|h(\theta_1)-h(\theta_2)|\leq A_{n,k}\|h\|_{L^\infty(S^{n-1})}|\theta_1-\theta_2|\quad\text{for all }\theta_1,\theta_2\in S^{n-1}.\end{equation}\end{proposition}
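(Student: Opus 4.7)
The plan is to reduce the Lipschitz estimate on $S^{n-1}$ to an $L^\infty$ bound for $D\tilde h$ on $\overline{B_1}$, where $\tilde h:\RR^n\to\RR$ is the unique homogeneous harmonic polynomial of degree $k$ with $\tilde h|_{S^{n-1}}=h$ (as in Remark \ref{sphextrm}), and then to control that bound using the maximum principle together with homogeneity and a standard interior gradient estimate for harmonic functions.

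For the reduction, I would exploit convexity of $\overline{B_1}$: for any $\theta_1,\theta_2\in S^{n-1}$ the straight segment $[\theta_1,\theta_2]$ lies in $\overline{B_1}$, and since $\tilde h$ is smooth on all of $\RR^n$, the mean value theorem gives
$$|h(\theta_1)-h(\theta_2)|=|\tilde h(\theta_1)-\tilde h(\theta_2)|\leq \Bigl(\sup_{\overline{B_1}}|D\tilde h|\Bigr)\,|\theta_1-\theta_2|.$$
So it suffices to produce a constant $A_{n,k}$ depending only on $n$ and $k$ with $\sup_{\overline{B_1}}|D\tilde h|\leq A_{n,k}\|h\|_{L^\infty(S^{n-1})}$.

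For this bound I would first combine homogeneity with the maximum principle for the harmonic polynomial $\tilde h$ to get $\|\tilde h\|_{L^\infty(B_R)}=R^k\|h\|_{L^\infty(S^{n-1})}$ for every $R>0$. Each partial derivative $\partial_i\tilde h$ is itself harmonic on $\RR^n$, so the classical interior gradient estimate for harmonic functions, $|\partial_i u(x_0)|\leq (n/R)\sup_{B_R(x_0)}|u|$ for $u$ harmonic on $B_R(x_0)$, applied to $u=\tilde h$ at any $x_0\in\overline{B_1}$ with $R=1$, yields
$$|\partial_i\tilde h(x_0)|\leq n\,\|\tilde h\|_{L^\infty(B_1(x_0))}\leq n\,\|\tilde h\|_{L^\infty(B_2)}=n\cdot 2^k\,\|h\|_{L^\infty(S^{n-1})},$$
using $B_1(x_0)\subset B_2$ by the triangle inequality. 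Summing over $i$ (or using $|D\tilde h|\leq\sqrt{n}\max_i|\partial_i\tilde h|$) then gives $\sup_{\overline{B_1}}|D\tilde h|\leq n^{3/2}2^k\|h\|_{L^\infty(S^{n-1})}$, and setting $A_{n,k}=n^{3/2}2^k$ closes the proof.

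There is no serious obstacle here, since the argument just glues together three standard facts about harmonic functions (the mean value theorem along a line segment, the maximum principle coupled with homogeneity, and the interior gradient estimate). The one conceptual check is that the resulting constant depends only on $n$ and $k$: this is automatic because we use nothing about $\tilde h$ beyond its degree and its harmonicity. The exponential dependence on $k$ is intrinsic, reflecting the fact that spherical harmonics of high degree oscillate more rapidly; any attempt to get a $k$-uniform constant is doomed by simple examples.
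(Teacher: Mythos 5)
Your proof is correct and follows the route the paper itself indicates just before stating Proposition \ref{spharmlip}: extend to the unique homogeneous harmonic polynomial $\tilde h$ of degree $k$, control $\sup_{\overline{B_1}}|D\tilde h|$ by the standard interior gradient estimate for harmonic functions together with homogeneity and the maximum principle, and conclude via the mean value theorem along the chord $[\theta_1,\theta_2]\subset\overline{B_1}$. The chain of inequalities checks out (each $\partial_i\tilde h$ is harmonic, $B_1(x_0)\subset B_2$ for $x_0\in\overline{B_1}$, and $\|\tilde h\|_{L^\infty(B_2)}=2^k\|h\|_{L^\infty(S^{n-1})}$), and the resulting constant $A_{n,k}=n^{3/2}2^k$ depends only on $n$ and $k$, as required.
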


\begin{corollary}\label{zerodist} For every spherical harmonic $h:S^{n-1}\rightarrow\RR$ of degree $k\geq 1$, \begin{equation}\label{beachball2} |h(\theta)|\leq A_{n,k}\|h\|_{L^\infty(S^{n-1})}\dist(\theta,\Sigma_h)\quad\text{for all }\theta\in S^{n-1}\end{equation}\end{corollary}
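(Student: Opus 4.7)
The plan is to reduce the estimate to a single application of Proposition~\ref{spharmlip} by choosing a nearest zero of $h$ on the sphere. First I would dispose of the degenerate case: if $h\equiv 0$, then $\Sigma_h=S^{n-1}$, $\dist(\theta,\Sigma_h)=0$ for every $\theta\in S^{n-1}$, and the inequality is just $0\leq 0$.

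For the main case $h\not\equiv 0$, I need to verify $\Sigma_h\neq\emptyset$ before I can select a nearest zero. Every spherical harmonic of degree $k\geq 1$ is $L^2(S^{n-1})$-orthogonal to the constants, so $\int_{S^{n-1}} h\,d\sigma=0$. Continuity of $h$ on the connected sphere $S^{n-1}$ (valid for $n\geq 2$), together with $h\not\equiv 0$, then forces $h$ to take both positive and negative values, and the intermediate value theorem produces some $\theta_*\in S^{n-1}$ with $h(\theta_*)=0$. Hence $\Sigma_h$ is a nonempty closed subset of the compact sphere, so there exists $\theta_0\in\Sigma_h$ realizing $|\theta-\theta_0|=\dist(\theta,\Sigma_h)$.

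Applying Proposition~\ref{spharmlip} to the pair $\theta,\theta_0\in S^{n-1}$ then gives
\[
|h(\theta)|=|h(\theta)-h(\theta_0)|\leq A_{n,k}\|h\|_{L^\infty(S^{n-1})}|\theta-\theta_0|=A_{n,k}\|h\|_{L^\infty(S^{n-1})}\dist(\theta,\Sigma_h),
\]
which is the claimed bound.

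There is essentially no genuine obstacle: the uniform Lipschitz constant from Proposition~\ref{spharmlip} does all of the analytic work, and the only step that requires comment is the nonemptiness of $\Sigma_h$, which I would justify through the mean-zero property of positive-degree spherical harmonics combined with connectedness of $S^{n-1}$. The rest is a one-line compactness selection followed by the immediate Lipschitz comparison.
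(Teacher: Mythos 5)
Your proof is correct and takes essentially the same approach as the paper: apply Proposition~\ref{spharmlip} with $\theta_1=\theta$ and $\theta_2$ a zero of $h$ on $S^{n-1}$, then minimize over such $\theta_2$. The only addition is your explicit check that $\Sigma_h\cap S^{n-1}\neq\emptyset$ via the mean-zero property of degree-$k\geq 1$ spherical harmonics, a point the paper leaves implicit.
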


\begin{proof} Apply Proposition \ref{spharmlip} with $\theta_1=\theta$ and $\theta_2\in\Sigma_h\cap S^{n-1}$. Then minimizing (\ref{beachball1}) over $\theta_2\in\Sigma_h\cap S^{n-1}$ yields (\ref{beachball2}).\end{proof}

\begin{corollary}\label{halfdist} Let $h:S^{n-1}\rightarrow\RR$ be a spherical harmonic of degree $k\geq 1$. If $\theta_0\in S^{n-1}$ satisfies $|h(\theta_0)|=\|h\|_{L^\infty(S^{n-1})}$, then \begin{equation}\label{beachball3}|h(\theta)|> \frac12\|h\|_{L^\infty(S^{n-1})}\quad\text{ for every }\theta\in S^{n-1}\cap B(\theta_0, 1/2A_{n,k}).\end{equation}\end{corollary}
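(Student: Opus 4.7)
The plan is to derive Corollary \ref{halfdist} as an immediate consequence of the Lipschitz estimate in Proposition \ref{spharmlip}, combined with the reverse triangle inequality. Throughout, write $M = \|h\|_{L^\infty(S^{n-1})}$ for brevity, and note that if $M = 0$ the statement is vacuous (no $\theta_0$ exists), so I may assume $M > 0$.

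First, I would fix an arbitrary $\theta \in S^{n-1} \cap B(\theta_0, 1/(2A_{n,k}))$ and apply Proposition \ref{spharmlip} with $\theta_1 = \theta_0$ and $\theta_2 = \theta$ to get
\begin{equation*}
|h(\theta_0) - h(\theta)| \leq A_{n,k} M |\theta_0 - \theta| < A_{n,k} M \cdot \frac{1}{2 A_{n,k}} = \frac{M}{2}.
\end{equation*}
Then, using the hypothesis $|h(\theta_0)| = M$ and the reverse triangle inequality,
\begin{equation*}
|h(\theta)| \geq |h(\theta_0)| - |h(\theta_0) - h(\theta)| > M - \frac{M}{2} = \frac{M}{2},
\end{equation*}
which is precisely \eqref{beachball3}.

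There is really no main obstacle here: the corollary is a textbook ``Lipschitz continuity yields a local lower bound near a point where the maximum modulus is attained'' argument, and all the analytic content (namely, that spherical harmonics of a fixed degree have a uniform Lipschitz constant proportional to their sup norm) is already packaged inside Proposition \ref{spharmlip}. The only point requiring a little care is preserving strict inequality in the conclusion: since the ball $B(\theta_0, 1/(2A_{n,k}))$ is open, one has $|\theta_0 - \theta| < 1/(2A_{n,k})$ strictly, which is what turns the first display above into a strict inequality and ultimately gives $|h(\theta)| > M/2$ rather than merely $\geq M/2$.
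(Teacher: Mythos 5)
Your proof is correct and follows exactly the same line as the paper: apply Proposition \ref{spharmlip} with $\theta_1 = \theta_0$, $\theta_2 = \theta$, then use the reverse triangle inequality. One small caveat: the paper fixes the convention that $B(x,r)$ denotes the \emph{closed} ball, so your claim that $B(\theta_0, 1/(2A_{n,k}))$ is open (and hence that $|\theta_0 - \theta|$ is strictly less than $1/(2A_{n,k})$) is inconsistent with that convention; under the paper's convention one only gets $|h(\theta)| \geq \tfrac12\|h\|_{L^\infty(S^{n-1})}$, which is in fact all the paper itself derives in its proof and all that is used downstream in Lemma \ref{hlemma}.
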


\begin{proof} By the reverse triangle inequality and Proposition \ref{spharmlip}, \begin{equation}|h(\theta)| \geq |h(\theta_0)|-|h(\theta_0)-h(\theta)|\geq |h(\theta_0)| - A_{n,k}\|h\|_{L^\infty(S^{n-1})} |\theta_0-\theta|.\end{equation} Thus $|h(\theta)|\geq \frac{1}{2}\|h\|_{L^\infty(S^{n-1})}$ whenever $|h(\theta_0)|=\|h\|_{L^\infty(S^{n-1})}$ and $|\theta_0-\theta|\leq 1/2A_{n,k}$.\end{proof}

The following lemma says that at the origin the zero set of a homogeneous harmonic polynomial of degree $k\geq 2$ is far away from flat.

\begin{lemma}\label{hlemma} For every $n\geq 2$ and $k\geq 2$ there is a constant $\delta'_{n,k}$ with the following property. For every homogeneous harmonic polynomial $h:\RR^n\rightarrow\RR$ of degree $k$ and for every scale $r>0$, $\theta_{\Sigma_h}(0,r)\geq \delta'_{n,k}$.\end{lemma}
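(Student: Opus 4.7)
Since $h$ is homogeneous, $\Sigma_h$ is a cone and $\theta_{\Sigma_h}(0,r)$ is independent of $r>0$. Thus it suffices to produce $\delta'_{n,k}>0$ such that $\theta_{\Sigma_h}(0,1)\geq \delta'_{n,k}$ for every nonzero homogeneous harmonic polynomial $h$ of degree $k$.

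I argue by contradiction and compactness. Suppose a sequence $(h^i)$ of nonzero homogeneous harmonic polynomials of degree $k$ satisfies $\theta_{\Sigma_{h^i}}(0,1)\to 0$. Rescaling each $h^i$ by a nonzero constant (which, as in Lemma \ref{rescale}, changes neither $\Sigma_{h^i}$ nor $\theta_{\Sigma_{h^i}}$), I may assume $\|h^i\|_{L^\infty(B_1)}=1$. Since homogeneous harmonic polynomials of degree $k$ form a finite-dimensional vector space, a subsequence converges in coefficients to a homogeneous harmonic polynomial $h$ of degree $k$ with $\|h\|_{L^\infty(B_1)}=1$, hence $h\not\equiv 0$. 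Pick hyperplanes $L_i\in G(n,n-1)$ realizing the minimum in $\theta_{\Sigma_{h^i}}(0,1)$ and pass to a further subsequence with $L_i\to L$; then $\HD[\Sigma_{h^i}\cap B_1,\, L\cap B_1]\to 0$.

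Next I identify the limit. Since $h$ is a nonconstant homogeneous harmonic polynomial of degree $k\geq 1$, the mean value property gives $\int_{S^{n-1}} h = 0$, so $h$ vanishes on some ray through the origin, and $\Sigma_h\cap\interior B_1\neq\emptyset$. Because $\Sigma_h$ is a cone, $\Sigma_h\cap B_1=\overline{\Sigma_h\cap\interior B_1}$, so (\ref{notisolated}) holds and Corollary \ref{hpzeroconverge} yields $\HD[\Sigma_{h^i}\cap B_1,\,\Sigma_h\cap B_1]\to 0$. Comparing limits, $\Sigma_h\cap B_1=L\cap B_1$; since both sides are cones, $\Sigma_h=L$.

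The remaining task, and the main obstacle, is to show that no nonzero homogeneous harmonic polynomial of degree $k\geq 2$ can have zero set equal to a hyperplane. The restriction $Y:=h|_{S^{n-1}}$ is a spherical harmonic of degree $k$, i.e.\ an eigenfunction of $-\Delta_{S^{n-1}}$ with eigenvalue $k(k+n-2)$, and its nodal set equals the equator $L\cap S^{n-1}$. Thus $Y$ has constant sign on each open hemisphere; say $Y>0$ on the hemisphere $H^+$ bounded by $L\cap S^{n-1}$. Then $Y|_{H^+}$ is a positive Dirichlet eigenfunction of $-\Delta_{S^{n-1}}$ on $H^+$. Any such Dirichlet eigenfunction extends by odd reflection across the equator to a spherical harmonic on $S^{n-1}$ of degree $j\geq 1$, so the Dirichlet spectrum on $H^+$ is $\{j(j+n-2):j\geq 1\}$, whose minimum is $n-1$. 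A positive Dirichlet eigenfunction must realize this first eigenvalue (any higher-eigenvalue Dirichlet eigenfunction is $L^2$-orthogonal to the positive first one and hence sign-changing), so $k(k+n-2)=n-1$, forcing $k=1$ and contradicting $k\geq 2$. A more computational alternative factors $h=\ell\cdot q$ with $\ell$ a defining linear form for $L$, uses Schwarz reflection to show $q$ has constant sign, and exploits the harmonic identity $2\nabla\ell\cdot\nabla q+\ell\Delta q=0$ via integration by parts on the sphere; the spectral route, however, is cleaner.
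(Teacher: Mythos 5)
Your proof is correct, but it follows a genuinely different path from the paper's. The paper argues directly and quantitatively: after rotating so the best-approximating hyperplane is $\{x_n=0\}$, it uses the uniform Lipschitz bound for spherical harmonics (Proposition \ref{spharmlip}) to control $|h|$ on the slab $\{|x_n|\leq\delta\}$ and to isolate a spherical cap on which $h\geq\tfrac12\|h\|_{L^\infty(S^{n-1})}$, then shows that small $\delta$ forces $\int_{S^{n-1}}h\,d\surf>0$ (for $k$ even, contradicting the mean value property) or $\int_{S^{n-1}}\theta_n h\,d\surf>0$ (for $k$ odd, contradicting orthogonality of spherical harmonics of different degrees), producing an explicit $\delta'_{n,k}$. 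You instead argue by compactness: normalize, use finite-dimensionality of the space of degree-$k$ homogeneous harmonic polynomials to extract limits $h$ and $L$, pass to the limit of zero sets via Corollary \ref{hpzeroconverge} to get $\Sigma_h=L$, and reduce to the rigidity fact that a hyperplane cannot be the zero set of a homogeneous harmonic polynomial of degree $k\geq2$. Your spectral proof of that fact is sound: odd reflection identifies the Dirichlet spectrum of the hemisphere with $\{j(j+n-2):j\geq1\}$, and a sign-definite Dirichlet eigenfunction must realize the first eigenvalue $n-1$, forcing $k=1$. The trade-off is clear: the paper's argument is elementary and effective (it exhibits a constant), while yours is shorter and more structural but non-effective and imports eigenvalue theory on the hemisphere. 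Incidentally, your rigidity step has an elementary proof in the paper's spirit: writing $h=x_n q$ with $q$ homogeneous of degree $k-1\geq1$ and nonvanishing off $\{x_n=0\}$, the parity of $q$ forces either $\int_{S^{n-1}}h\,d\surf\neq0$ (when $q$ is odd, i.e.\ $k$ even) or $\int_{S^{n-1}}\theta_n h\,d\surf\neq0$ (when $q$ is even, i.e.\ $k$ odd), contradicting harmonicity exactly as above.
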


\begin{figure}\begin{center}\includegraphics[width=.8\textwidth]{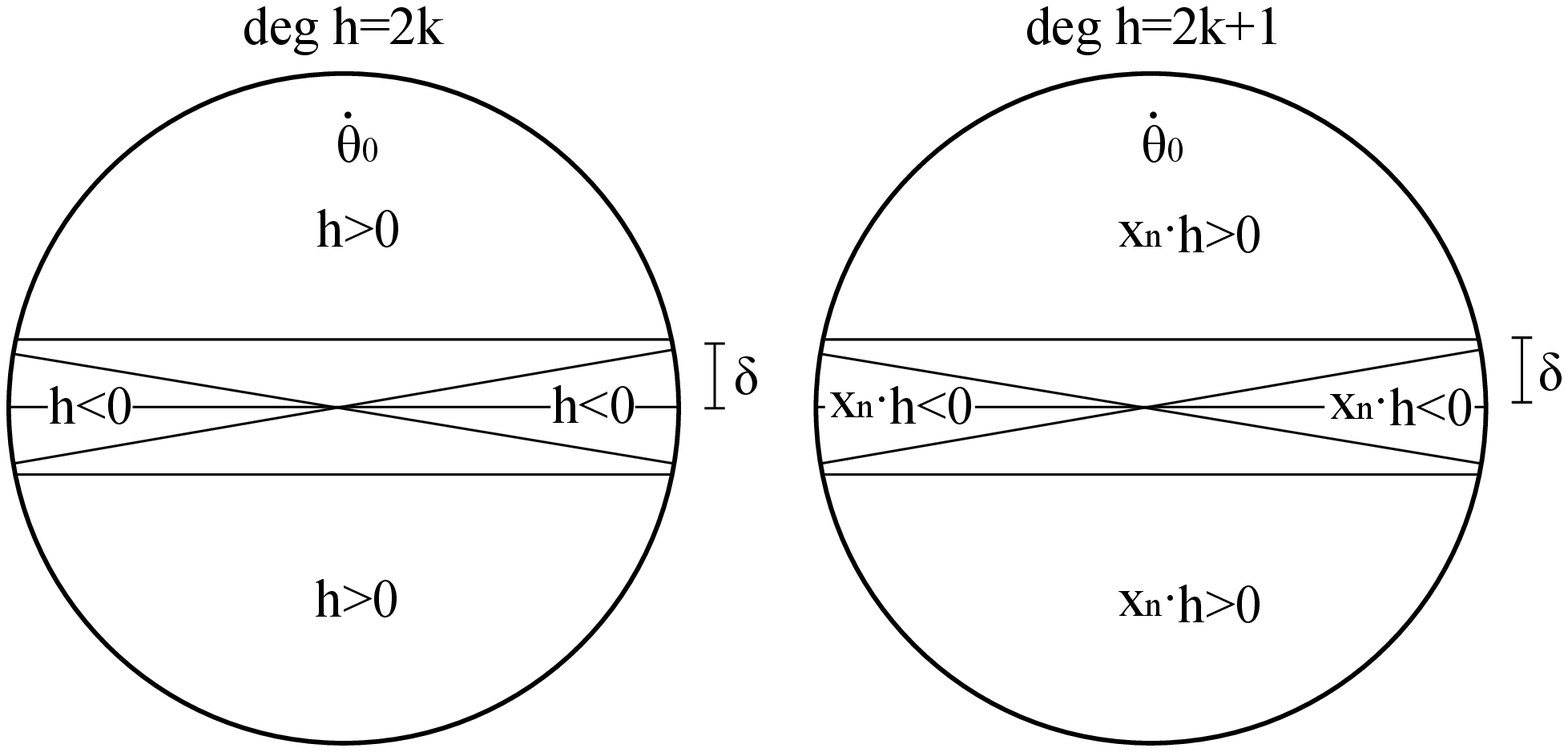}\end{center}\caption{Proof of Lemma \ref{hlemma}}\end{figure}

\begin{proof} Suppose that $h:\RR^n\rightarrow\RR$ is a homogeneous harmonic polynomial of degree $k$. Since $h$ is homogeneous, $\delta=\theta_{\Sigma_h}(0,1)=\theta_{\Sigma_h}(0,r)$ for all $r>0$. By applying a rotation, we may assume without loss of generality that $\HD[\Sigma_h\cap B(0,1),\{x_n=0\}\cap B(0,1)]\leq \delta$. Also, by replacing $h$ with $-h$ if necessary, we may assume that there exists $\theta_0\in S^{n-1}$ such that $h(\theta_0)=\|h\|_{L^\infty(S^{n-1})}$ (i.e.\ the sup norm is obtained at a positive value of $h$). Finally, by performing a change of coordinates $x\mapsto -x$ if necessary, we may assume $\theta_0\in\RR^{n-1}\times\RR^+$ (i.e.\ the last coordinate of $\theta_0$ is positive). We now break the argument into two cases, depending on the parity of $k$.

Suppose that $k\geq 2$ is even. By the mean value property for harmonic functions, \begin{equation}\label{giraffe1}\frac{1}{\sigma_{n-1}}\int_{S^{n-1}}h(\theta)d\surf(\theta)=h(0)=0.\end{equation} We will show that $\delta$ being small violates (\ref{giraffe1}). By Corollary \ref{zerodist}, $\dist(\theta_0,\Sigma_h)\geq A_{n,k}^{-1}$. Hence $h(\theta)>0$ for all $\theta\in S^{n-1}\cap \{x_n>\delta\}$ provided that $\delta \ll A_{n,k}^{-1}$ (the last coordinate of $\theta_0$ is positive). Assume this is true. Since $h$ is even, $h(\theta)>0$ for all $\theta\in S^{n-1}\cap \{x_n<-\delta\}$, as well. Thus negative values of $h$ (obtained at points of the sphere) can only be obtained inside the strip $S_\delta=S^{n-1}\cap \{|x_n|\leq \delta\}$. Moreover, $|h(\theta)|\leq 2\delta A_{n,k}\|h\|_{L^\infty(S^{n-1})}$ for $\theta\in S_\delta$, by Corollary \ref{zerodist}. But $h(\theta)\geq (1/2)\|h\|_{L^\infty(S^{n-1})}$ for all $\theta\in \Delta_0=S^{n-1}\cap B(\theta_0,1/2A_{n,k})$ by Corollary \ref{halfdist}. It follows that \begin{equation}\begin{split} \int_{S^{n-1}} h(\theta) d\surf(\theta) &= \int_{S^{n-1}} h^+(\theta) d\surf(\theta) - \int_{S^{n-1}} h^-(\theta) d\surf(\theta) \\ &\geq \int_{\Delta_0} h(\theta)d\surf(\theta)-\int_{S_\delta}|h(\theta)|d\surf(\theta)\\
&\geq \|h\|_{L^\infty(S^{n-1})}\left(\frac12\surf(\Delta_0)-2\delta A_{n,k}\surf(S_\delta)\right)>0\end{split}\end{equation} if $\delta$ is too small, for example if $\delta< \surf(\Delta_0)/8A_{n,k}\sigma_{n-1}=\delta_{n,k}'$, which violates (\ref{giraffe1}). Therefore, $\delta\geq \delta'_{n,k}$ when $k$ is even.

Suppose that $k\geq 3$ is odd. Because the spherical harmonics of different degrees are orthogonal in $L^2(S^{n-1})$ (e.g.\ see \cite{HFT} Proposition 5.9), \begin{equation}\label{giraffe2} \int_{S^{n-1}} \theta_n h(\theta)d\surf(\theta)=0.\end{equation} This time we will show that (\ref{giraffe2}) is violated if $\delta$ is small. Since $\dist(\theta_0,\Sigma_h)\geq A_{n,k}^{-1}$, $h(\theta)>0$ and $\theta_n h(\theta)>0$ for all $\theta\in S^{n-1}\cap \{x_n>\delta\}$ if $\delta\ll A_{n,k}^{-1}$. Assume this is true. Since $h$ is odd, $\theta_n h(\theta)$ is even and $\theta_n h(\theta)>0$ for all $\theta\in S^{n-1}\cap\{x_n<-\delta\}$ too. Hence $\theta_n h(\theta)$ can only assume negative values in the strip $S_\delta=S^{n-1}\cap \{|x_n|\leq\delta\}$. Moreover, $|\theta_n h(\theta)|\leq 2\delta^2 A_{n,k}\|h\|_{L^\infty(S^{n-1})}$ for every $\theta\in S_\delta$, by Corollary \ref{zerodist}. On the other hand, $\theta_n h(\theta)>\delta(1/2)\|h\|_{L^\infty(S^{n-1})}$ for all $\theta\in \Delta_0=S^{n-1}\cap B(\theta_0,1/2A_{n,k})$, by Corollary \ref{halfdist}. Thus \begin{equation}\begin{split} \int_{S^{n-1}} \theta_nh(\theta) d\surf(\theta) &= \int_{S^{n-1}} (\theta_nh(\theta))^+ d\surf(\theta) - \int_{S^{n-1}} (\theta_nh(\theta))^- d\surf(\theta) \\ &\geq \int_{\Delta_0} \theta_n h(\theta)d\surf(\theta)-\int_{S_\delta}|\theta_nh(\theta)|d\surf(\theta)\\
&\geq \delta\|h\|_{L^\infty(S^{n-1})}\left(\frac12\surf(\Delta_0)-2\delta A_{n,k}\surf(S_\delta)\right)>0\end{split}\end{equation} if $\delta$ is too small, for example if $\delta< \surf(\Delta_0)/8A_{n,k}\sigma_{n-1}=\delta_{n,k}'$, which violates (\ref{giraffe2}). Therefore, $\delta\geq \delta'_{n,k}$ when $k$ is odd.\end{proof}

Harmonic polynomials enjoy a partial converse to Lemma \ref{zetaimpliesflat}.

\begin{proposition}\label{converselemma} For all $n\geq 2$ and $d\geq 1$ there exist $\delta_{n,d}>0$ with the following property. If $h:\RR^n\rightarrow\RR$ is a harmonic polynomial of degree $d$ and $h(x)=0$, then $\zeta_1(h,x,r)<\delta_{n,d}^{-1}$ whenever $\theta_{\Sigma_h}(x,r)<\delta_{n,d}$.\end{proposition}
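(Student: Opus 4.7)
I would argue by contradiction using a normal families / blow-up scheme. Suppose the statement fails for some fixed $n\geq 2$ and $d\geq 1$. Then there exist harmonic polynomials $h^i:\RR^n\to\RR$ of degree $d$, points $x_i\in\Sigma_{h^i}$, and scales $r_i>0$ with $\theta_{\Sigma_{h^i}}(x_i,r_i)<1/i$ but $\zeta_1(h^i,x_i,r_i)\geq i$. Using the invariances in Lemmas \ref{recenter}, \ref{dilate}, and \ref{rescale}, I would replace $h^i$ by
\[
p^i(y):=M_i^{-1}h^i(x_i+r_iy),
\]
where the positive scalar $M_i$ is chosen so that $\max_{0\leq k\leq d}\|p^{i(0)}_k\|_{L^\infty(B_1)}=1$. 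This reduction preserves both $\theta_{\Sigma_{p^i}}(0,1)<1/i$ (scale-invariance of $\theta$) and $\zeta_1(p^i,0,1)\geq i$ (Lemmas \ref{rescale}, \ref{recenter}, \ref{dilate}). Because $\zeta_1(p^i,0,1)\to\infty$ together with the normalization forces $\|p^{i(0)}_1\|_{L^\infty(B_1)}\to 0$ while some $\|p^{i(0)}_{j_i}\|_{L^\infty(B_1)}=1$ with $j_i\geq 2$ for all large $i$.

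Since $p^i$ are polynomials of degree at most $d$ with uniformly bounded coefficients, a subsequence satisfies $p^i\to p$ in coefficients. By Lemma \ref{hparts} and uniform convergence, $p$ is a harmonic polynomial; by the lower bound in the normalization, $p$ is nonconstant with $p(0)=0$; and by Lemma \ref{zetacts}, $\zeta_1(p,0,1)=\infty$, so Lemma \ref{maglemma} gives $p^{(0)}_1=0$, i.e., $Dp(0)=0$.

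Next I pass to the level of zero sets. By Blaschke's selection theorem (Lemma \ref{selectthm}) a further subsequence satisfies $\Sigma_{p^i}\cap B_1\to F$ in Hausdorff distance for some nonempty closed $F\subset B_1$; by compactness of $G(n,n-1)$, I may also assume that the hyperplanes $L_i\in G(n,n-1)$ attaining $\theta_{\Sigma_{p^i}}(0,1)$ converge to some $L\in G(n,n-1)$. The triangle inequality for $\HD$ combined with $\theta_{\Sigma_{p^i}}(0,1)\to 0$ then forces $F=L\cap B_1$. By Lemma \ref{hpfconverge}, $F\cap\interior B_1=\Sigma_p\cap \interior B_1$, so $\Sigma_p\cap\interior B_1=L\cap \interior B_1$. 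In particular $\Sigma_p\cap B_s=L\cap B_s$ for all sufficiently small $s>0$, and so the unique geometric blow-up of $\Sigma_p$ at $0$ (Definition \ref{blowupdefn}) is the hyperplane $L$. By Corollary \ref{hptohomog}, this blow-up equals $\Sigma_{p^{(0)}_j}$ with $j\geq 2$ the smallest index such that $p^{(0)}_j\neq 0$ (using $p(0)=0$ and $Dp(0)=0$). Thus $\Sigma_{p^{(0)}_j}=L$, hence $\theta_{\Sigma_{p^{(0)}_j}}(0,1)=0$; but Lemma \ref{hlemma} gives $\theta_{\Sigma_{p^{(0)}_j}}(0,1)\geq \delta'_{n,j}>0$, the desired contradiction.

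The main obstacle I anticipate is calibrating the normalization so that, simultaneously, (i) coefficient-level compactness applies, (ii) the limit polynomial $p$ is nonconstant and retains the vanishing linear term at $0$, and (iii) Lemma \ref{hpfconverge} transfers the Hausdorff limit of $\Sigma_{p^i}\cap B_1$ into control of $\Sigma_p\cap \interior B_1$. Dividing by the largest $L^\infty$-norm among the homogeneous parts at $0$ on $B_1$ accomplishes all three: the upper bound $1$ gives compactness, the linear part is forced to shrink by $\zeta_1\to\infty$, and some higher-degree homogeneous part keeps its norm equal to $1$ in the limit, which is exactly what feeds the contradiction with Lemma \ref{hlemma}.
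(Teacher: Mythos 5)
Your argument is correct and matches the paper's proof in all essentials: the same normal-families/blow-up contradiction scheme, using the invariance lemmas (\ref{rescale}, \ref{recenter}, \ref{dilate}) to normalize, Lemma \ref{zetacts} and Lemma \ref{maglemma} to force $Dp(0)=0$ in the limit, Lemma \ref{hpfconverge} and Blaschke's theorem to show $\Sigma_p$ coincides with a hyperplane near the origin, and Corollary \ref{hptohomog} together with Lemma \ref{hlemma} for the final contradiction. Your normalization ($L^\infty$-norms of homogeneous parts on $B_1$ rather than coefficient magnitudes) and the way you extract the final contradiction (identifying the blow-up directly with the limit hyperplane $L$ rather than passing the $\theta$-bound to the blow-up limit) differ only cosmetically from the paper's.
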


\begin{proof} If $d=1$, then $h$ is linear and $\zeta_1(h,x,r)=0=\theta_{\Sigma_h}(x,r)$ for all $x\in\Sigma_h$.  Thus the case $d=1$ is trivial.

Let $n\geq 2$ and $d\geq 2$ be fixed. Suppose for contradiction that for every $N\geq 1$ there exists a harmonic polynomial $h^{N}:\RR^n\rightarrow\RR$ of degree $d$, $x_N\in\RR^n$ and $r_N>0$ such that $h^{N}(x_N)=0$, $\zeta_1(h^N,x_N,r_N)>N$ and $\theta_{\Sigma_{h^N}}(x_N,r_N)<1/N$. Replacing each polynomial $h^N$ with $\tilde h^N(y)=c_Nh(r_N(y+x_N))$, we may assume without loss of generality that $x_N=0$ and $r_N=1$ for all $N\geq 1$, and $\max_{|\alpha|\leq d}|D^\alpha h^N(0)|=1$. Thus, there exists a sequence $h^N$ of harmonic polynomials in $\RR^n$ of degree $d$ with uniformly bounded coefficients such that $h^N(0)=0$, $\zeta_1(h^N,0,1)\geq N$ and $\theta_{\Sigma_{h^N}}(0,1)\leq 1/N$ for all $N\geq 1$. Passing to a subsequence, we may assume that $h^N\rightarrow h$ in coefficients to some harmonic polynomial $h:\RR^n\rightarrow\RR$. Note  $h$ is nonconstant because we assumed that for each polynomial $h^N$ there is some multi-index $\alpha$ such that $|D^\alpha h^N(0)|=1$. Also note $\zeta_1(h,0,1)=\infty$, by Lemma \ref{zetacts}. Taking a further subsequence we may also assume that there exists a closed set $F$ such that $\Sigma_{h^N}\cap B_1\rightarrow F$ in the Hausdorff distance. By Lemma \ref{hpfconverge}, $F\cap \interior B_1=\Sigma_h\cap\interior B_1$. Hence $\theta_{\Sigma_h}(0,r)=0$ for all $r<1$. To complete the proof we blow up $\Sigma_h$ at the origin and apply Lemma \ref{hlemma}. Expand $h$ as $h=h^{(0)}_k+\dots + h^{(0)}_j$ where $k=\deg h$ and $h^{(0)}_j\neq 0$. Note $2\leq j\leq k$, since $\zeta_1(h,0,1)=\infty$. Choose any sequence $r_i\downarrow 0$ and define $h^i(y)=h(r_i y)$ for all $y\in\RR^n$. By Corollary \ref{hptohomog}, $\Sigma_{h^{i}}\cap B_1\rightarrow \Sigma_{h^{(0)}_j}\cap B_1$ in the Hausdorff distance. Therefore, since $\theta_{\Sigma_{h^i}}(0,1)=\theta_{\Sigma_{h}}(0,r_i)=0$ for all $i$ such that $r_i<1$, we conclude that $\theta_{\Sigma_{h^{(0)}_j}}(0,1)=0$. Since $h^{(0)}_j$ is a homogeneous polynomial of degree $j\geq 2$, this contradicts Lemma \ref{hlemma}. Our supposition was false. Hence there exists $N_{n,d}\geq 1$ such that for every harmonic polynomial $h$ in $\RR^n$ of degree $d$, every $x\in\Sigma_{h}$ and every $r>0$, $\zeta_1(h,x,r)<N_{n,d}$ whenever $\theta_{\Sigma_h}(x,r)<1/N_{n,d}=\delta_{n,d}$.\end{proof}

\begin{corollary}\label{epcor1} Let $h:\RR^n\rightarrow\RR$ be a harmonic polynomial of degree $d\geq 1$. If $h(x)=0$ and $\theta_{\Sigma_h}(x,r)<\delta_{n,d}$, then $Dh(x)\neq 0$ and $\theta_{\Sigma_h}(x,sr)< C_{n,d}s$ for all $s\in(0,1)$.\end{corollary}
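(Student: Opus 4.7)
The plan is to simply chain together the four ingredients already available: Proposition \ref{converselemma}, Lemma \ref{maglemma}, Lemma \ref{lineardecay}, and Lemma \ref{zetaimpliesflat}. There is no new geometry to do; the corollary is an assembly step.

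First I would handle the non-vanishing of the derivative. Assume $h(x)=0$ and $\theta_{\Sigma_h}(x,r)<\delta_{n,d}$, where $\delta_{n,d}$ is the constant from Proposition \ref{converselemma}. That proposition immediately gives $\zeta_1(h,x,r)<\delta_{n,d}^{-1}<\infty$. Since $\zeta_1(h,x,r)$ is finite, Lemma \ref{maglemma}(3) tells us that the linear homogeneous part $h^{(x)}_1$ is not the zero polynomial, which by \eqref{pkxdefn} is exactly the statement that $Dh(x)\neq 0$.

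Second, I would upgrade the single-scale flatness bound to all smaller scales using the linear decay of $\zeta_1$ at a root. Because $h(x)=0$, Lemma \ref{lineardecay} gives
\begin{equation}
\zeta_1(h,x,sr)\leq s\,\zeta_1(h,x,r)< s\,\delta_{n,d}^{-1}\quad\text{for all }s\in(0,1).
\end{equation}
Feeding this into Lemma \ref{zetaimpliesflat} (noting that $h(x)=0$ is exactly the hypothesis needed there and that $sr>0$) yields
\begin{equation}
\theta_{\Sigma_h}(x,sr)\leq \sqrt{2}(d-1)\,\zeta_1(h,x,sr)< \sqrt{2}(d-1)\delta_{n,d}^{-1}\,s.
\end{equation}
Thus it suffices to set $C_{n,d}=\max\{2,\sqrt{2}(d-1)\delta_{n,d}^{-1}\}$, which satisfies the requirement $C_{n,d}>1$ from Theorem \ref{polythm}; the case $d=1$ is trivial since then $\zeta_1\equiv0$ and $\theta_{\Sigma_h}\equiv 0$ at any root, so any constant $C_{n,1}>1$ works.

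Nothing here is really an obstacle, since all the heavy lifting has been carried out already: Proposition \ref{converselemma} is the only non-trivial ingredient, and it is precisely the converse direction that allows us to deduce analytic information ($\zeta_1<\infty$) from geometric information ($\theta_{\Sigma_h}<\delta_{n,d}$). Once one observes that the remaining content of the corollary is just propagation of smallness across scales via the linear decay of $\zeta_1$, the argument is essentially one line. The only mild bookkeeping item is making sure the constant $C_{n,d}$ is chosen to exceed $1$ uniformly in $d$, which is handled by taking a maximum with $2$.
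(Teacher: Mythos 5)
Your proof is correct and follows essentially the same chain as the paper: Proposition \ref{converselemma} to bound $\zeta_1$, Lemma \ref{maglemma} to conclude $Dh(x)\neq 0$, then Lemma \ref{lineardecay} followed by Lemma \ref{zetaimpliesflat} to propagate the flatness bound to all smaller scales. The only cosmetic difference is that the paper sets $C_{n,d}=\sqrt{2}(d-1)\delta_{n,d}^{-1}$ outright rather than taking a maximum with $2$.
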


\begin{proof} Once again there is nothing to prove if $d=1$. Thus assume that $h:\RR^n\rightarrow\RR$ is a harmonic polynomial of degree $d\geq 2$ such that $\theta_{\Sigma_h}(x,r)<\delta_{n,d}$ for some $x\in\Sigma_h$ and $r>0$. By Proposition \ref{converselemma}, we get $\zeta_1(h,x,r)<\delta_{n,d}^{-1}$. Hence, because $\zeta_1(h,x,r)<\infty$, Lemma \ref{maglemma} guarantees that $h_1^{(x)}\neq 0$, or equivalently, $Dh(x)\neq 0$. Using Lemma \ref{lineardecay}, we conclude that $\zeta_1(h,x,sr)\leq s\zeta_1(h,x,r)<s\delta_{n,d}^{-1}$ for all $s\in(0,1)$. Thus, by Lemma \ref{zetaimpliesflat}, $\theta_{\Sigma_h}(x,sr)<\sqrt{2}(d-1)\delta_{n,d}^{-1}s=C_{n,d}s$ for all $s\in(0,1)$.\end{proof}

\begin{corollary}\label{epcor2} Let $h:\RR^n\rightarrow\RR$ be a harmonic polynomial of degree $d\geq 1$. If $x\in\Sigma_h$ and $Dh(x)=0$, then $\theta_{\Sigma_h}(x,r)\geq \delta_{n,d}$ for all $r>0$.\end{corollary}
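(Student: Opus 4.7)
The statement is precisely the contrapositive of (part of) Corollary \ref{epcor1}, so the plan is almost trivial at this stage: all the work has been done in building up Proposition \ref{converselemma} and Corollary \ref{epcor1}.

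More precisely, my plan is to argue by contradiction. Suppose $x\in\Sigma_h$ and $Dh(x)=0$, but there exists some scale $r>0$ with $\theta_{\Sigma_h}(x,r)<\delta_{n,d}$. Since $x\in\Sigma_h$ means $h(x)=0$, the hypotheses of Corollary \ref{epcor1} are satisfied, and that corollary concludes $Dh(x)\neq 0$, contradicting our assumption. Hence no such $r$ exists, i.e.\ $\theta_{\Sigma_h}(x,r)\geq \delta_{n,d}$ for every $r>0$.

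There is no real obstacle here; the substantive content has already been packaged inside Proposition \ref{converselemma} (which produces $\delta_{n,d}$ via the normal-families/blow-up argument using Lemma \ref{hlemma}) and then pushed into Corollary \ref{epcor1} through Lemma \ref{maglemma}. If one preferred to give a direct argument without invoking Corollary \ref{epcor1}, one would apply Proposition \ref{converselemma} to get $\zeta_1(h,x,r)<\delta_{n,d}^{-1}<\infty$, which by Lemma \ref{maglemma}(3) forces $h_1^{(x)}\neq 0$, i.e.\ $Dh(x)\neq 0$; but this is exactly what is already distilled in Corollary \ref{epcor1}. So the cleanest presentation is simply to cite that corollary and take the contrapositive.
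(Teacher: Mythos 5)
Your proposal is correct and is exactly what the paper does: the paper's proof consists of the single line ``This is the contrapositive of Corollary \ref{epcor1}.'' Your elaboration via the contradiction argument and the alternative direct route through Proposition \ref{converselemma} and Lemma \ref{maglemma} is sound but adds nothing beyond what is already packaged in Corollary \ref{epcor1}.
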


\begin{proof} This is the contrapositive of Corollary \ref{epcor1}.\end{proof}

We can now record the proof of Theorem \ref{polythm}. Recall: \emph{For all $n\geq 2$ and $d\geq 1$ there exists a constant $\delta_{n,d}>0$ such that for any harmonic polynomial $h:\RR^n\rightarrow\RR$ of degree $d$ and for any $x\in\Sigma_h$,
\begin{eqnarray*} Dh(x)=0\quad&\Leftrightarrow\quad&\theta_{\Sigma_h}(x,r)\geq \delta_{n,d}\quad\hbox{for all }r>0,\\
 Dh(x)\neq 0\quad&\Leftrightarrow\quad&\theta_{\Sigma_h}(x,r)< \delta_{n,d}\quad\hbox{for some }r>0.\end{eqnarray*} Moreover, there exists a constant $C_{n,d}>1$ such that if $\theta_{\Sigma_h}(x,r)<\delta_{n,d}$ for some $r>0$, then $\theta_{\Sigma_h}(x,sr)< C_{n,d} s$ for all $s\in(0,1)$.}

\begin{proof}[of Theorem \ref{polythm}] Suppose that $h:\RR^n\rightarrow\RR$, $n\geq 2$, is a harmonic polynomial of degree $d\geq 1$ and fix $x\in\Sigma_h$. If $Dh(x)\neq 0$, then $\zeta_1(h,x,1)<\infty$ by Lemma \ref{maglemma}. Applying Lemma \ref{zetaimpliesflat} and Lemma \ref{lineardecay}, it follows that \begin{equation} \theta_{\Sigma_h}(x,r)\leq \sqrt{2}(d-1)r\zeta_1(h,x,1)\quad\text{for all }r\in(0,1).\end{equation} Since $\zeta_1(h,x,1)<\infty$, we see that $\theta_{\Sigma_h}(x,r)<\delta_{n,d}$ for some $r\in(0,1)$ sufficiently small. Conversely, if $Dh(x)=0$, then $\theta_{\Sigma_h}(x,r)\geq\delta_{n,d}$ for all $r>0$, by Corollary \ref{epcor2}. Finally, if $\theta_{\Sigma_h}(x,r)<\delta_{n,d}$, then $\theta_{\Sigma_h}(x,sr)<C_{n,d}s$ for all $s\in(0,1)$, by Corollary \ref{epcor1}.
\end{proof}

\begin{remark} It is natural to ask if a stronger statement than Proposition \ref{converselemma} holds. Namely, is it true that given $\varepsilon>0$ there exists $\delta>0$ such that $\theta_{\Sigma_h}(x,r)<\delta\Rightarrow\zeta_1(h,x,r)<\varepsilon$? Unfortunately the answer is no, as the following example illustrates. Consider the harmonic polynomial $h(x,y)=xy$ with root $(2,0)\in\Sigma_h$ and radius $r=1$. On one hand, $$\Sigma_h \cap B((2,0),1)=[1,3]\times\{0\}$$ is a line segment: $\theta_{\Sigma_h}((2,0),1)=0$. On the other hand, $h((x,y)+(2,0))=xy+2y$. Hence  $h^{(2,0)}_1=2y$, $h^{(2,0)}_2=xy$ and $\zeta_1(h,(2,0),1)=\|xy\|_{L^\infty(B_1)}/\|2y\|_{L^\infty(B_1)}=1/2>0$. This shows $\zeta_1(h,(2,0),1)>0$ even though $\theta_{\Sigma_h}((2,0),1)<\delta$ for all $\delta>0$.\end{remark}

\begin{remark}\label{sharmseparate} In Lemma \ref{hlemma} we showed that the zero set $\Sigma_{g}$ of a homogeneous harmonic polynomial $g$ of degree $1$ (that is, a hyperplane through the origin) and the zero set $\Sigma_h$ of a homogeneous harmonic polynomial of degree $d\geq 2$ are far apart in Hausdorff distance: \begin{equation}\HD[\Sigma_g\cap B_1,\Sigma_h\cap B_1]\geq \delta(1,n)>0.\end{equation} However, the following questions remain open. Is it true that, whenever $g$ and $h$ are homogeneous harmonic polynomials of different degrees, say $c=\deg g<\deg h=d$, then their zero sets are far apart in the sense that $\HD[\Sigma_g\cap B_1,\Sigma_h\cap B_1]\geq \delta(c,d)>0$? Furthermore, provided the answer is yes, is this separation independent of the degrees in the sense that $\inf\{\delta(m,n):1\leq c<d\}>0$?
\end{remark}

\section{Sets Approximated by Zero Sets of Harmonic Polynomials}
\label{SectApproximation}

In this section, we study flat points in sets which admit uniform local approximations by zero sets of harmonic polynomials. To make this idea of local approximation precise, the following notation will prove useful.

\begin{definition}[(Local Set Approximation)] \label{lapprox} \hspace{1cm} \begin{enumerate}
 \item[(i)] A \emph{local approximation class} $\mathcal{S}$ is a nonempty collection of closed subsets of $\RR^n$ such that $0\in S$ for each $S\in\mathcal{S}$. For every closed set $A\subset\RR^n$, $x\in A$ and $r>0$, define the \emph{local closeness} $\Theta^{\mathcal{S}}_A(x,r)$ of $A$ to $\mathcal{S}$ near $x$ at scale $r$ by \begin{equation*} \Theta^{\mathcal{S}}_A(x,r)=\frac{1}{r}\inf_{S\in \mathcal{S}}\HD\left[A\cap B(x,r), (x+S)\cap B(x,r)\right]\end{equation*}
\item[(ii)] A closed set $A\subset\RR^n$ is said to be \emph{locally $\delta$-close to $\mathcal{S}$} if for every compact set $K\subset\RR^n$ there is $r_0>0$ such that $\Theta^{\mathcal{S}}_A(x,r)\leq \delta$ for all $x\in A\cap K$ and $0<r\leq r_0$.
\item[(iii)] A closed set $A\subset\RR^n$ is \emph{locally well-approximated by $\mathcal{S}$} if $A$ is locally $\delta$-close to $\mathcal{S}$ for all $\delta>0$.
\end{enumerate}\end{definition}

\begin{example} \label{ReifDefn} Let $\mathcal{F}=G(n,n-1)$ be the Grassmannian of $(n-1)$-dimensional subspaces of $\RR^n$. Note $\mathcal{F}$ is a local approximation class and $\Theta^{\mathcal{F}}_A(x,r)=\theta_{A}(x,r)$ is the \emph{local flatness} of $A$ near $x$ at scale $r$ defined in the introduction. Sets which admit uniform approximations by hyperplanes at all locations and scales first appeared in Reifenberg's solution of the Plateau problem in arbitrary codimension \cite{Reifenberg}; they are now called Reifenberg flat sets. In the terminology of Definition \ref{lapprox}, a closed set $A\subset\RR^n$ is \emph{locally $\delta$-Reifenberg flat} if $A$ is locally $\delta$-close to $\mathcal{F}$; and $A$ is \emph{locally Reifenberg flat with vanishing constant} provided $A$ is locally well-approximated by $\mathcal{F}$.
\end{example}

Let's pause to collect three facts about Reifenberg flat sets and local flatness.

Fact one. Uniform local flatness guarantees good topology of a set:

\begin{theorem}[Reifenberg's topological disk theorem] \label{diskthm} For each $n\geq 2$, there exists $\delta_n>0$ with the following property. If $0<\delta\leq \delta_n$ and $A\subset\RR^n$ is locally $\delta$-Reifenberg flat, then $A$ is locally homeomorphic to an $(n-1)$-dimensional disk.\end{theorem}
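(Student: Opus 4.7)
The plan is to construct, for each $x_0 \in A$, a homeomorphism from an $(n-1)$-dimensional disk onto a relative neighborhood of $x_0$ in $A$, following Reifenberg's iterated-rotation scheme. First I would fix $x_0 \in A$ and a radius $R > 0$ small enough that $\theta_A(x, r) \leq \delta$ for every $x \in A \cap B(x_0, 2R)$ and every $0 < r \leq R$, and set $r_k = 2^{-k} R$. For each $y \in A \cap B(x_0, R)$ and each $k \geq 0$, I would select a hyperplane $P_k(y) \in G(n,n-1)$ attaining the minimum in the definition of $\theta_A(y, r_k)$. A triangle-inequality argument for Hausdorff distance on the two balls $B(y, r_k)$ and $B(z, r_{k+1})$, combined with the two-sided flatness (each approximating plane is well-populated with points of $A$, which are themselves approximated by the other plane), would yield the angle estimate $\angle(P_k(y), P_{k+1}(z)) \leq C_n \delta$ whenever $|y - z| \leq 4 r_k$, independently of $k$.

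Next, I would build inductively a sequence of diffeomorphisms $\sigma_k \colon \RR^n \to \RR^n$, each supported in a $C r_k$-neighborhood of $A \cap B(x_0, R)$, with the property that $\sigma_k$ carries the affine plane $y + P_k(y)$ locally onto $y + P_{k+1}(y)$ in a ball of radius $r_k$ about each $y \in A \cap B(x_0, R)$, glued together by a smooth partition of unity at scale $r_k$. The angle estimate delivers the pointwise bounds $\|\sigma_k - \mathrm{id}\|_\infty \leq C_n \delta r_k$ and $\|D\sigma_k - I\|_\infty \leq C_n \delta$. Because displacements decay geometrically in $k$, the compositions $\Phi_N = \sigma_N \circ \cdots \circ \sigma_0$ converge uniformly to a continuous limit $\Phi$, which is H\"older with exponent tending to $1$ as $\delta \to 0$; for $\delta \leq \delta_n$ sufficiently small the derivative estimate $\|D\Phi_N - I\|_\infty \leq \prod_{k=0}^{N}(1 + C_n \delta)$ stays bounded away from $0$ and $\infty$, forcing $\Phi$ to be a local homeomorphism of $\RR^n$.

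Finally, I would verify that the image under $\Phi$ of the initial disk $\Delta_0 = (x_0 + P_0(x_0)) \cap B(x_0, R/2)$ is a relatively open neighborhood of $x_0$ in $A$. The containment $\Phi(\Delta_0) \subset A$ is the easier half: each $\Phi_N(\Delta_0)$ lies within $C\delta r_N$ of $A$ by construction, and closedness of $A$ delivers this to the limit. The reverse containment---every $y \in A$ near $x_0$ is actually equal to $\Phi(q)$ for some $q \in \Delta_0$---is the main obstacle, and is exactly where the \emph{two-sided} definition of $\theta_A$ is essential: it forces points of $A$ to sit in the images of the rotated disks rather than merely near them, a distinction that fails for one-sided Reifenberg-type conditions. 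Once both inclusions are in hand, $\Phi$ restricts to a homeomorphism from $\Delta_0$ onto $A \cap U$ for some open neighborhood $U$ of $x_0$, and the local $(n-1)$-disk structure of $A$ follows.
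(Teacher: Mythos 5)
The paper does not prove this theorem: it is cited as a classical result, with the original attribution to Reifenberg \cite{Reifenberg} and a pointer to David and Toro \cite{DT} for a modern exposition. So there is no "paper's own proof" to compare against; what you have produced is a sketch of Reifenberg's original construction, and in its broad outline (inductive rotations at dyadic scales, angle control between approximating planes at comparable scales and nearby points, a partition-of-unity gluing, uniform convergence of the compositions, and two-sidedness of the flatness condition to force $A$ into the image of the limit disk) it is faithful to that construction.

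There is, however, a genuine gap in the second paragraph. You assert that the derivative bound $\|D\Phi_N - I\|_\infty \leq \prod_{k=0}^{N}(1 + C_n\delta)$ ``stays bounded away from $0$ and $\infty$,'' and invoke this to get that $\Phi$ is a local homeomorphism. But that product is $(1+C_n\delta)^{N+1}$, which diverges as $N\to\infty$ for any fixed $\delta>0$; there is no choice of $\delta_n$ that makes it bounded. This is not a cosmetic slip --- it reflects the fact that the Reifenberg parametrization is in general only bi-H\"older (with exponent $1-C_n\delta$ on each side), never bi-Lipschitz, and so no argument via a uniform Lipschitz bound on $D\Phi_N$ can work. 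The correct route to injectivity of the limit is to establish uniform two-sided H\"older estimates on the maps $\Phi_N$ at every scale (using that the $k$-th displacement is $O(\delta r_k)$ and only finitely many $\sigma_j$ move a pair of points that are $r_k$ apart by a comparable amount before they separate), or equivalently to show directly that distinct points get separated by some finite stage and remain separated in the limit. Without one of these, the passage from ``$\Phi_N\to\Phi$ uniformly'' to ``$\Phi$ is a homeomorphism onto its image'' is unjustified, and the proof as written does not close.
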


In fact, Reifenberg \cite{Reifenberg} proved (but did not state) that a set $A$ in Theorem \ref{diskthm} admits local bi-H\"older parameterizations of a disk. For a recent exposition of the topological disk theorem and its extension to ``sets with holes", see David and Toro \cite{DT}.

Fact two. Uniform local flatness controls the Hausdorff dimension of a set from above, in a quantitative way. (For a complementary statement about the Hausdorff dimension of ``uniformly non-flat'' sets, see Bishop and Jones \cite{BJ} and David \cite{D}.)

\begin{theorem}[Mattila and Vuorinen \cite{MV}] \label{mvdim} If $A\subset\RR^n$ is a locally $\delta$-Reifenberg flat set, then $\dim_HA\leq n-1+C\delta^2$ for some $C=C(n)>0$.\end{theorem}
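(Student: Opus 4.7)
Proof proposal. I plan to establish the bound by estimating the upper Minkowski (box-counting) dimension of $A \cap K$ on an arbitrary compact set $K \subset \RR^n$, which suffices because $\dim_H \leq \overline{\dim}_M$ and $\dim_H A = \sup_K \dim_H(A \cap K)$. The scheme is an iterative covering of $A \cap K$ by ``adapted slabs'' at dyadic scales $r_k = 2^{-k} r_0$, where $r_0$ is small enough that the $\delta$-Reifenberg flat hypothesis applies for every $x \in A \cap K$ and $r \leq r_0$. Each element of the cover $\mathcal{C}_k$ is a box of transverse radius $r_k$ and thickness $2\delta r_k$ aligned with the approximating hyperplane $L_{x,r_k}$ at some point $x \in A$; by hypothesis, $A \cap B(x,r_k)$ lies inside such a slab. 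The goal is to show $|\mathcal{C}_k| \leq \bigl(2^{n-1}(1 + C \delta^2)\bigr)^k$, whence $\overline{\dim}_M(A \cap K) \leq n - 1 + C'\delta^2$.

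The refinement from scale $r_k$ to $r_{k+1} = r_k/2$ rests on two geometric inputs. The first is an \emph{angle estimate} for hyperplanes at consecutive scales: whenever $y \in A \cap B(x, r_k)$, the hyperplanes $L_{x, r_k}$ and $L_{y, r_k/2}$ differ by at most an angle $C_0\delta$, because both lie within Hausdorff distance $\delta r_k$ of $A$ in the common ball $B(y, r_k/2)$, and evaluating the two affine functionals that cut them out at $n$ points of $A$ forming a simplex of diameter comparable to $r_k/2$ forces their linear parts to be close. Such spanning simplices exist inside $A \cap B(y,r_k/2)$ by iterated application of the Reifenberg approximation once $\delta$ is sufficiently small (alternatively, one may invoke the topological disk theorem of Reifenberg already quoted). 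The second input is a \emph{tilted covering lemma}: a parent slab at scale $r_k$ can be covered by at most $2^{n-1}(1 + C_1\delta^2)$ child slabs at scale $r_{k+1}$, each aligned with a hyperplane tilted by at most $C_0\delta$ from the parent. The quadratic dependence arises from the Taylor expansion $\cos(C_0\delta) = 1 - \tfrac12 C_0^2\delta^2 + O(\delta^4)$: a tilted child projects onto the parent's midplane with its transverse extent reduced only by a factor $\cos(C_0\delta)$, so tiling the parent's $(n-1)$-disk of radius $r_k$ by projections of sub-boxes of side $r_k/2$ requires at most $\bigl(\cos(C_0\delta)\bigr)^{-(n-1)} \cdot 2^{n-1} = 2^{n-1}(1 + O(\delta^2))$ children instead of the ideal $2^{n-1}$.

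Iterating $k$ times gives $|\mathcal{C}_k| \leq 2^{(n-1)k}(1 + C_1\delta^2)^k$ and each element of $\mathcal{C}_k$ has diameter at most $r_k \sqrt{n - 1 + 4\delta^2} \leq 2^{-k} r_0 \sqrt{n}$. The Hausdorff $s$-content therefore satisfies
\begin{equation}
\mathcal{H}^s_{2^{-k}r_0\sqrt n}(A\cap K) \;\leq\; 2^{(n-1)k}(1 + C_1\delta^2)^k \bigl(2^{-k} r_0 \sqrt n\bigr)^s,
\end{equation}
which tends to zero as $k \to \infty$ for every $s > (n-1) + \log(1 + C_1\delta^2)/\log 2$, yielding the bound with $C = C_1/\log 2$. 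The principal obstacle is the tilted covering lemma: a naive argument that retains only the slab containment and ignores the alignment between consecutive approximating hyperplanes gives a correction of order $\delta$, which is too weak. Genuine $\delta^2$ rate requires simultaneously exploiting the $O(\delta)$ angle control between consecutive hyperplanes \emph{and} the elementary but crucial fact that an orthogonal projection onto a hyperplane loses length only to second order in the tilt angle.
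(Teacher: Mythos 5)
The paper does not reprove this result --- it is quoted from Mattila and Vuorinen \cite{MV} --- so there is no in-paper argument to compare against, and I evaluate your proposal on its own terms. The high-level plan (bound the Minkowski dimension of $A\cap K$ by a multi-scale covering) is sound, and the $O(\delta)$ angle estimate between approximating hyperplanes at consecutive scales is correct and standard. The gap is in the ``tilted covering lemma,'' and it is not a technicality: the claimed per-generation factor $2^{n-1}(1+C_1\delta^2)$ already fails at $\delta=0$. When $\delta=0$ the set is locally a hyperplane and the adapted slab at $(y,r_k/2)$ has cross-section an $(n-1)$-dimensional \emph{disk} of radius $r_k/2$ (the slab is the part of a neighborhood of $L_{y,r_k/2}$ inside the ball $B(y,r_k/2)$ to which the Reifenberg hypothesis applies, so it is not a cube). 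Covering a disk of radius $r_k$ in $\RR^{n-1}$ by disks of radius $r_k/2$ requires strictly more than $2^{n-1}$ pieces once $n-1\geq 2$: for $n=3$ the minimum is $7$, not $4$, so your iteration would give $\overline{\dim}_M A\leq\log_2 7\approx 2.81$ for a flat plane in $\RR^3$, which is absurd. The root cause is that choosing a fresh near-optimal covering independently at each dyadic generation is wasteful by a fixed multiplicative constant per generation, regardless of $\delta$; to push the bound down to $n-1+O(\delta^2)$ one needs a \emph{coherent} multi-scale structure whose overlaps are controlled once and for all (for instance a David--Semmes--Christ system of pseudo-dyadic cubes on $A$, or a sequence of Lipschitz graphs $\Gamma_k$ approximating $A$ at scale $2^{-k}$ whose $\mathcal{H}^{n-1}$-measure grows like $(1+C\delta^2)^k$), rather than a brand-new cover re-estimated at each step.

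Even if you replace disks with cubes so that $2^{n-1}$ children exactly tile a parent at $\delta=0$, two \emph{first-order} effects remain unaddressed. The children must be centered at points of $A$ and aligned with the approximating hyperplane at their own centers, so neighboring siblings are misaligned by angles $O(\delta)$ and offset normally by $O(\delta r_k)$, creating gaps and overlaps whose relative size is $O(\delta)$. Independently, to guarantee a ball $B(y_j,r_{k+1})$ actually contains the tile assigned to $y_j$ one must shrink each tile by $O(\delta r_k)$, inflating the count by $(1-O(\delta))^{-(n-1)}=1+O(\delta)$. Your $\cos(C_0\delta)$ computation captures only the foreshortening of a single tilted box under orthogonal projection; it sees neither the lateral misalignment between adjacent boxes nor the placement slack, and both of those enter at order $\delta$, not $\delta^2$. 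As written the argument therefore cannot produce a $\delta^2$ rate. That quadratic gain is the whole difficulty of the Mattila--Vuorinen theorem, and extracting it requires a construction in which the scale-by-scale $O(\delta)$ errors cancel rather than merely accumulate.
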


\begin{corollary}\label{vanishdim} If $A\subset\RR^n$ is locally Reifenberg flat with vanishing constant, then $\dim_HA=n-1$.\end{corollary}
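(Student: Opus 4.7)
The plan is to prove $\dim_H A = n-1$ by establishing matching upper and lower bounds, both of which fall out almost immediately from results already on the table.

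For the upper bound, I would exploit the fact that ``vanishing constant'' means $A$ is locally $\delta$-Reifenberg flat for \emph{every} $\delta>0$, not just a single one. Fix an arbitrary $\delta>0$. Then Theorem \ref{mvdim} applies and gives $\dim_H A \leq n-1+C(n)\delta^2$. Since the left-hand side does not depend on $\delta$, I can let $\delta\downarrow 0$ through positive values to conclude $\dim_H A\leq n-1$. This step is essentially a one-line application of the Mattila--Vuorinen estimate and is not where the content lies.

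For the lower bound, I would invoke Reifenberg's topological disk theorem (Theorem \ref{diskthm}). Since $A$ is locally $\delta$-close to $\mathcal{F}$ for every $\delta>0$, in particular $A$ is locally $\delta_n$-Reifenberg flat, so each point of $A$ has a neighborhood in $A$ that is homeomorphic to an $(n-1)$-dimensional disk. Any such neighborhood has topological dimension exactly $n-1$, and for general metric spaces topological dimension is a lower bound for Hausdorff dimension. Hence $\dim_H A\geq n-1$. (Alternatively, since the constant vanishes, one could use the bi-H\"older parameterizations of David--Toro \cite{DT} with H\"older exponents arbitrarily close to $1$, which directly transport the Hausdorff dimension of an $(n-1)$-disk into a lower bound $n-1$ for $\dim_H A$; either route works.)

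Combining the two bounds yields $\dim_H A = n-1$. The main conceptual obstacle, such as it is, is recognizing that the quantitative dimension estimate of Theorem \ref{mvdim} becomes sharp in the limit $\delta\to 0$ and that one must separately argue a lower bound since Reifenberg flatness alone (without vanishing constant) does not force $A$ to be $(n-1)$-dimensional from below; the topological disk theorem is what supplies the missing half.
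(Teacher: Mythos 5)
Your proposal is correct and follows essentially the same route as the paper: the upper bound via Theorem \ref{mvdim} (letting $\delta\downarrow 0$) and the lower bound via Theorem \ref{diskthm}. The paper's proof is just a terser version of exactly this argument.
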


\begin{proof} Let $A\subset\RR^n$ be locally Reifenberg flat with vanishing constant. On one hand, $\dim_HA\leq n-1$ by Theorem \ref{mvdim}. On the other hand, the lower bound $\dim_HA\geq n-1$ follows from Theorem \ref{diskthm}.\end{proof}

Fact three. Given an estimate on the local flatness of a set at one scale, we automatically get (a worse) estimate on the local flatness at a smaller scale for nearby locations.

\begin{lemma}\label{thetaonehalf} Let $A\subset\RR^n$ be a nonempty closed set and let $x,y\in A$. If $B(y,sr)\subset B(x,r)$, then $\theta_A(y,sr) \leq 4 \theta_A(x,r)/s$.\end{lemma}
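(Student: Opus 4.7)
The plan is to pick the optimal hyperplane for $\theta_A(x,r)$, translate it to pass through $y$, and use this translated hyperplane to bound $\theta_A(y,sr)$. Set $\delta=\theta_A(x,r)$ and choose $L\in G(n,n-1)$ with $\HD[A\cap B(x,r),(x+L)\cap B(x,r)]\le \delta r$. Since $y\in A\cap B(x,r)$ we have $\dist(y,x+L)\le \delta r$, so $y+L$ and $x+L$ are parallel hyperplanes at perpendicular distance at most $\delta r$. I will bound $\HD[A\cap B(y,sr),(y+L)\cap B(y,sr)]$ and show it is at most $4\delta r$, which rearranges to $\theta_A(y,sr)\le 4\delta/s$. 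We may assume $2\delta\le s$; otherwise $4\delta/s>2\ge \theta_A(y,sr)$ and the claim is trivial.

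The easy direction is controlling $\sup_{a\in A\cap B(y,sr)}\dist(a,y+L)$. Any $a\in A\cap B(y,sr)$ lies in $A\cap B(x,r)$ because $B(y,sr)\subset B(x,r)$, so $\dist(a,x+L)\le \delta r$. Since $y+L$ is parallel to $x+L$ at perpendicular distance at most $\delta r$, the triangle inequality (applied to the one-dimensional projections onto the common unit normal of $L$) gives $\dist(a,y+L)\le 2\delta r$.

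The harder direction is controlling $\sup_{z\in (y+L)\cap B(y,sr)}\dist(z,A\cap B(y,sr))$. A direct attempt projects $z$ onto $x+L$, finds a nearby point of $A\cap B(x,r)$, and returns a point of $A$ close to $z$, but that point need not lie in $B(y,sr)$. To fix this, I first contract $z$ toward $y$: let $\tilde z=y+(1-2\delta/s)(z-y)$, so $\tilde z\in y+L$ and $|\tilde z-y|\le (1-2\delta/s)sr$. Let $\tilde z^*$ be the orthogonal projection of $\tilde z$ onto $x+L$, so $|\tilde z^*-\tilde z|\le \delta r$; since $|\tilde z^*-x|\le |\tilde z-y|+|y-x|+\delta r\le (1-2\delta/s)sr+(1-s)r+\delta r\le r$, we have $\tilde z^*\in (x+L)\cap B(x,r)$. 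By hypothesis pick $a\in A\cap B(x,r)$ with $|a-\tilde z^*|\le \delta r$. Then $|a-\tilde z|\le 2\delta r$, so $|a-y|\le 2\delta r+(1-2\delta/s)sr=sr$, placing $a\in A\cap B(y,sr)$, and $|a-z|\le |a-\tilde z|+|\tilde z-z|\le 2\delta r+2\delta r=4\delta r$.

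Combining the two directions yields $\HD[A\cap B(y,sr),(y+L)\cap B(y,sr)]\le 4\delta r$, hence $\theta_A(y,sr)\le 4\theta_A(x,r)/s$. The only real obstacle is the second direction: naively approximating $z$ by a point of $A\cap B(x,r)$ produces a point that may sit outside $B(y,sr)$, especially when $z$ is close to the sphere $\partial B(y,sr)$. The remedy is the contraction $z\mapsto \tilde z$ chosen so that the slack $2\delta r$ exactly absorbs the approximation error $|a-\tilde z|\le 2\delta r$, at the price of an extra factor of two (totalling the factor $4$ in the statement).
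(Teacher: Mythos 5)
Your proof is correct and follows essentially the same strategy as the paper: pick the optimal hyperplane for the big ball, replace it by a parallel copy through $y$, and handle the hard containment direction by first pulling a boundary point of $(y+L)\cap B(y,sr)$ inward by $2\delta r$ (you do this by radial contraction with factor $1-2\delta/s$, the paper by choosing a nearby point in $B(y,sr-2\delta r)$) before projecting; both dispose of the degenerate case $s\leq 2\delta$ trivially. The two arguments differ only in normalization (the paper fixes $x=0$, $r=1$, $L=\{x_n=0\}$) and in the precise bookkeeping of the inward retraction.
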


\begin{proof} Applying a harmless translation, dilation and rotation, we may assume without loss of generality that $x=0$, $r=1$ and \begin{equation} \delta=\theta_A(0,1)=\HD[A\cap B_1,L_0\cap B_1]\end{equation} where $L_0=\{x\in\RR^n:x_n=0\}$. Fix $y\in A$ and $s>0$ such that $B(y,s)\subset B_1$. To estimate $\theta_A(y,s)$ from above we will bound the Hausdorff distance between the set $A\cap B(y,s)$ and the hyperplane $L_y\cap B(y,s)$ inside $B(y,s)$ where the $L_y=\{x\in\RR^n: x_n=y_n\}$.

Suppose that $z\in A\cap B(y,s)$. Since $z\in A\cap B_1$, $\dist(z,L_0\cap B_1)=|z-\pi(z)|\leq \delta$ where $\pi:\RR^n\rightarrow L_0$ denotes the orthogonal projection onto $L_0$. Hence \begin{equation}\dist(z,L_y\cap B(y,s))\leq \delta+\dist(\pi(z), L_y\cap B(y,s)).\end{equation} To continue, note that since $\pi(z)\in \pi(B(y,s))$, $\dist(\pi(z), L_y\cap B(y,s))=|y_n|\leq \delta$. Thus $\dist(z, L_y\cap B(y,s))\leq 2\delta$ for all $z\in A\cap B(y,s)$.

Next suppose that $w\in L_y\cap B(y,s)$. Since $\pi(w)\in L_0\cap B_1$, $\dist (\pi(w), A\cap B_1)\leq \delta$. Hence $\dist(w, A\cap B_1)\leq |w-\pi(w)|+\dist(\pi(w),A\cap B_1)\leq |y_n|+\delta\leq 2\delta$ for every $w\in L_y\cap B(y,s)$. But we really want to estimate $\dist(w, A\cap B(y,s))$. To that end choose $w'\in L_y\cap B(y,s-2\delta)$ such that $|w-w'|\leq 2\delta$. From above we know $\dist(w', A\cap B_1)\leq 2\delta$, say $\dist(w', A\cap B_1)=|w'-x'|\leq 2\delta$ for some $x'\in A\cap B(0,1)$. Because $w'\in B(y,s-2\delta)$, it follows that $|x'-y|\leq |x'-w'|+|w'-y|\leq 2\delta+s-2\delta\leq s$ and $x'\in A\cap B(y,s)$. Thus $\dist(w', A\cap B(y,s))\leq 2\delta$. We conclude \begin{equation}\dist(w, A\cap B(y,s)) \leq |w-w'|+\dist(w',A\cap B(y,s)\leq 4\delta.\end{equation} Therefore, \begin{equation}\label{beaver1} \theta_A(y,s) \leq \frac{1}{s} \HD[A\cap B(y,s), L_y\cap B(y,s)] \leq \frac{4\delta}{s}\end{equation} as desired.  In fact, we have only established (\ref{beaver1}) provided that $L_y\cap B(y,s-2\delta)\neq\emptyset$, i.e.\ when $s>2\delta$. On the other hand, if $s\leq 2\delta$, then $\theta_A(y,s)\leq 1<2\leq 4\delta/s$, as well.\end{proof}

In order to discuss local approximations of a set by zero sets of harmonic polynomials, we introduce a local approximation class $\mathcal{H}_d$.

\begin{definition}\label{HdApprox} For each $d\geq 1$ assign $\mathcal{H}_d$ to be the collection of zero sets $V=h^{-1}(0)$ of nonconstant harmonic polynomials $h:\RR^n\rightarrow\RR$ of degree at most $d$ such that $h(0)=0$. Note $\mathcal{H}_d$ is a local approximation class. If $A\subset\RR^n$ is a closed set, $x\in A$ and $r>0$, then \begin{equation}\label{theta2approx}\Theta^{\mathcal{H}_d}_A(x,r)=\frac{1}{r}\inf_{V} \HD[A\cap B(x,r), (x+V)\cap B(x,r)]\end{equation} where $V$ ranges over the zero sets of harmonic polynomials $h:\RR^n\rightarrow\RR$ such that $h(0)=0$ and $1\leq \deg h\leq d$.\end{definition}

\begin{remark} When $d=1$, $\mathcal{H}_1=\mathcal{F}=G(n,n-1)$. Thus $\Theta^{\mathcal{H}_1}_A(x,r)=\Theta^{\mathcal{F}}_A(x,r)=\theta_A(x,r)$ is the local flatness of $A$ near $x\in A$ at scale $r>0$.\end{remark}

The following lemma roughly states that if a set is uniformly close to the zero set of a harmonic polynomial on all small scales, then local flatness at one scale automatically controls local flatness on smaller scales. This is an application of Theorem \ref{polythm}.

\begin{lemma}\label{emu} For all $n\geq 2$, $d\geq 1$ and $\delta>0$, there exist $\varepsilon=\varepsilon(n,d,\delta)>0$ and $\eta=\eta(n,d,\delta)>0$ with the following property. Let $A\subset\RR^n$, $x\in A$, $r>0$ and assume that \begin{equation}\label{emu1} \sup_{0<r'\leq r}\Theta^{\mathcal{H}_d}_A(x,r')<\varepsilon.\end{equation} If $\theta_A(x,r)<\eta$, then $\sup_{0<r'\leq r}\theta_A(x,r')<\delta$.\end{lemma}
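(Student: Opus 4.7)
The plan is to argue by contradiction and compactness, using Theorem \ref{polythm} to propagate flatness of an approximating harmonic polynomial from a large scale down to scale~$1$. Since the conclusion only relaxes as $\delta$ grows, I may assume throughout that $\delta<\delta^*_{n,d}/8$, where $\delta^*_{n,d}:=\min_{1\le j\le d}\delta_{n,j}$ and $C^*_{n,d}:=\max_{1\le j\le d}C_{n,j}$ are the constants supplied by Theorem \ref{polythm}.

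Suppose the lemma fails for this $\delta$. Then there exist sequences $\varepsilon_k,\eta_k\downarrow 0$, closed sets $A_k\subset\RR^n$, points $x_k\in A_k$, and radii $r_k>0$ such that the two smallness hypotheses hold at every scale $\le r_k$, yet $\sup_{r'\le r_k}\theta_{A_k}(x_k,r')\ge\delta$. To make the scale structure usable I will take $r'_k=r_k/2^{j^*_k}$ to be the largest dyadic submultiple of $r_k$ at which the flatness is at least $\delta/8$ (existence follows from the sup assumption together with Lemma \ref{thetaonehalf}, which transfers the lower bound from a non-dyadic to a nearby dyadic scale). Because $\theta_{A_k}(x_k,r_k)<\eta_k<\delta/8$ for $k$ large, $j^*_k\ge 1$; combining the minimality of $j^*_k$ with Lemma \ref{thetaonehalf} to interpolate at non-dyadic scales yields, after translating $x_k\to 0$ and rescaling by $1/r'_k$, closed sets $A'_k\ni 0$ with $\theta_{A'_k}(0,1)\ge\delta/8$, $\theta_{A'_k}(0,s)<\delta$ for all $s\in[2,R_k]$, $\theta_{A'_k}(0,R_k)<\eta_k$, and $\Theta^{\mathcal{H}_d}_{A'_k}(0,s)<\varepsilon_k$ for all $s\in(0,R_k]$, where $R_k:=r_k/r'_k\ge 2$. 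Applying Lemma \ref{thetaonehalf} once more (with $r=R_k$, $sr=1$) gives $\theta_{A'_k}(0,1)\le 4\eta_k R_k$, so $R_k\to\infty$.

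Next I will combine Blaschke's selection theorem (Lemma \ref{selectthm}) with a diagonal argument to pass to a subsequence so that $A'_k$ converges in Hausdorff distance on every compact subset of $\RR^n$ to a closed set $A_\infty\ni 0$. For each fixed $s>0$ and $k$ large, some $h^s_k\in\mathcal{H}_d$ satisfies $\HD[A'_k\cap B_s,\Sigma_{h^s_k}\cap B_s]\le\varepsilon_k s$; I will renormalize $h^s_k$ so that its largest monomial coefficient equals~$1$ (which does not change the zero set), then extract a further subsequence converging in coefficients to a nonconstant harmonic polynomial $g^s$ of degree $\le d$ with $g^s(0)=0$. By Lemma \ref{hpfconverge}, the Hausdorff limit of $\Sigma_{h^s_k}\cap B_s$ meets $B^\circ_s$ exactly in $\Sigma_{g^s}\cap B^\circ_s$, and since $\varepsilon_k\to 0$ this limit also equals $A_\infty\cap B_s$. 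Hence $A_\infty\cap B^\circ_s=\Sigma_{g^s}\cap B^\circ_s$ for every $s>0$. The fixed-scale continuity of $\theta$ under Hausdorff convergence (obtained by exchanging the infimum and the limit, using compactness of $G(n,n-1)$) then transfers the flatness data: $\theta_{A_\infty}(0,1)\ge\delta/8$ and $\theta_{A_\infty}(0,s)\le\delta$ for all $s\ge 2$.

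The contradiction will come by sending $s\to\infty$. For each fixed $s\ge 2$, the inclusion $B_s\subset B^\circ_{s+1}$ yields $A_\infty\cap B_s=\Sigma_{g^{s+1}}\cap B_s$, hence $\theta_{\Sigma_{g^{s+1}}}(0,s)=\theta_{A_\infty}(0,s)\le\delta<\delta^*_{n,d}$. Theorem \ref{polythm} applied to $g^{s+1}$ then gives $\theta_{\Sigma_{g^{s+1}}}(0,1)<C^*_{n,d}/s$, and since $A_\infty\cap B_1=\Sigma_{g^{s+1}}\cap B_1$, the same bound holds for $\theta_{A_\infty}(0,1)$; letting $s\to\infty$ forces $\theta_{A_\infty}(0,1)=0$, contradicting $\theta_{A_\infty}(0,1)\ge\delta/8$. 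The main technical hurdle I anticipate is the identification $A_\infty\cap B^\circ_s=\Sigma_{g^s}\cap B^\circ_s$ at every scale: the normalization of $h^s_k$ must be arranged so that the limit $g^s$ is nonconstant (otherwise Lemma \ref{hpfconverge} does not apply), Lemma \ref{hpfconverge} itself only controls the interior of each ball, and a diagonal extraction over a countable family of scales is needed to obtain a single subsequence that works at all $s$ simultaneously.
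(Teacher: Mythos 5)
Your proof is correct in broad strategy but takes a genuinely different route from the paper, and it leaves a technical gap that needs to be sealed.

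\emph{Comparison of approaches.} The paper's proof is direct and iterative: it fixes explicit parameters $\tau,\sigma,s,\varepsilon,\eta$ in terms of $\delta$ and $\delta^*_{n,d},C_{n,d}$, proves a single-step improvement (if $\Theta^{\mathcal{H}_d}_A(x,r)<\varepsilon$ and $\theta_A(x,r)<\tau$ then $\theta_A(x,sr)<\tau$) using Theorem~\ref{polythm} applied to the \emph{finite-stage} approximating polynomial $h$, and then iterates down dyadically in $s$, using Lemma~\ref{thetaonehalf} only to fill in non-dyadic scales. Yours argues by contradiction and compactness: blow up along a worst dyadic scale, extract a limit set $A_\infty$ that locally coincides with zero sets $\Sigma_{g^s}$, and apply Theorem~\ref{polythm} to the limit polynomials. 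The paper's route is more elementary, fully quantitative, and avoids all Hausdorff-convergence subtleties because the polynomial is brought in at a fixed scale $r$ without passing to a limit.

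\emph{The gap.} Your argument transfers flatness bounds to the limit: from $\theta_{A'_k}(0,1)\ge\delta/8$ you want $\theta_{A_\infty}(0,1)\ge\delta/8$, and from $\theta_{A'_k}(0,s)<\delta$ you want $\theta_{A_\infty}(0,s)\le\delta$. Both steps require $\HD$-convergence $A'_k\cap B_\rho \to A_\infty\cap B_\rho$ for the relevant closed balls $B_\rho$ ($\rho=1$ and $\rho=s$), but Blaschke plus a diagonal argument only gives convergence to sets $F_\rho$ with $F_\rho\cap\interior B_\rho = A_\infty\cap\interior B_\rho$; as the paper's Example in \S\ref{SectConvergenceZeroSets} and Lemma~\ref{hpfconverge} show, the two may differ on $\partial B_\rho$. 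Since enlarging or shrinking a set on $\partial B_\rho$ can move $\theta(\cdot,\rho)$ in either direction (it increases one supremum inside $\HD$ and decreases the other), you cannot simply pass the inequalities through the limit. You anticipate that ``Lemma~\ref{hpfconverge} only controls the interior,'' but you do not resolve the resulting ambiguity in $\theta_{A_\infty}(0,\rho)$. This is repairable --- e.g.\ by slightly shrinking radii so that $\Sigma_{g^s}\cap\partial B_\rho$ has no points isolated from $\Sigma_{g^s}\cap\interior B_\rho$, or better, by mimicking the paper and applying Theorem~\ref{polythm} to $h^{s}_k$ at finite $k$ so that no Hausdorff limit of sets is taken at all --- but as written the step is not justified.

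Two smaller matters: the renormalization making the largest coefficient equal to $1$ does guarantee a nonconstant limit $g^s$ (since $g^s(0)=0$ and $g^s\not\equiv 0$), so that part is fine; and your reduction to small $\delta$ and the estimate $R_k\to\infty$ via Lemma~\ref{thetaonehalf} are both correct.
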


\begin{proof} Let $\delta>0$ be given and fix parameters $\varepsilon>0$, $\sigma>0$, and $\tau>0$ to be chosen later. Assume that $A\subset\RR^n$ is a non-empty set which satisfies (\ref{emu1}) at some location $x\in A$ and initial scale $r>0$. Also assume that $\theta_A(x,r)<\tau$. Then by definition there exists a hyperplane $L\in G(n,n-1)$ such that \begin{equation}\label{otter1} \HD[A\cap B(x,r),(x+L)\cap B(x,r)]<\tau r.\end{equation} On the other hand, since $\Theta^{\mathcal{H}_d}_A(x,r)<\varepsilon$, there exists a harmonic polynomial $h:\RR^n\rightarrow\RR$ such that $h(0)=0$ and $1\leq \deg h\leq d$ for which \begin{equation}\label{otter2} \HD[A\cap B(x,r),(x+\Sigma_h)\cap B(x,r)]<\varepsilon r.\end{equation} Combining (\ref{otter1}) and (\ref{otter2}), we have \begin{equation}\label{otter3} \HD[\Sigma_h\cap B_{r},L\cap B_{r}]< (\tau + \varepsilon)r. \end{equation} Set $\delta_*=\min\left\{\delta_{n,1},\dots,\delta_{n,d}\right\}$, where $\delta_{n,1},\dots, \delta_{n,d}$ denote the constants from Theorem \ref{polythm}, and assign $s=\sigma\delta_*C_{n,d}^{-1}$ where $C_{n,d}$ also denotes the constant from Theorem \ref{polythm}. Assume $\tau+\varepsilon<\delta_*$. By (\ref{otter3}) and Theorem \ref{polythm}, $\theta_{\Sigma_h}(0,2sr)<2\sigma$. Hence there exists $P\in G(n,n-1)$ such that \begin{equation}\label{otter4} \HD[\Sigma_h\cap B_{2sr}, P\cap B_{2sr}]< 2\sigma(2sr)=4\sigma sr.\end{equation} We will use (\ref{otter2}) and (\ref{otter4}) to estimate $\HD[A\cap B(x,sr), (x+P)\cap B(x,sr)]$.

First suppose that $x'\in A\cap B(x,sr)$. By (\ref{otter2}), $\dist(x', (x+\Sigma_h)\cap B(x,r))< \varepsilon r$. Hence there exists $y\in\Sigma_h$ such that $|x'-x-y|<\varepsilon r$. We now specify that $\varepsilon\leq s\approx \sigma$. Then $y \in \Sigma_h \cap B_{sr+\varepsilon r}\subset \Sigma_h\cap B_{2sr}$. Hence by (\ref{otter4}), $\dist(y,P\cap B_{2sr})< 4\sigma sr$. Choose $p\in P\cap B_{2sr}$ such that $|y-p|<4\sigma sr$. In fact, since $y\in B_{sr+\varepsilon r}$, we know $p\in B_{sr+\varepsilon r+4\sigma sr}$. Since $P$ is a hyperplane through the origin, we can find a second point $p'\in P\cap B_{sr}$ such that $|p'-p|\leq\varepsilon r+4\sigma sr$. Thus $x+p'\in (x+P)\cap B(x,sr)$ and \begin{equation} |x'-x-p'|\leq |x'-x-y|+|y-p|+|p-p'| < 2\varepsilon r + 8\sigma sr. \end{equation} We conclude that \begin{equation}\label{otter5} \dist (x', (x+P)\cap B(x,sr))< 2\varepsilon r+8\sigma sr\quad\text{for all }x'\in A\cap B(x,sr).\end{equation}

Next suppose that $x+p\in (x+P)\cap B(x,sr)$. Since $P$ is a hyperplane, we can select a second point $x+p'\in (x+P)\cap B(x,sr-\varepsilon r-4\sigma sr)$ such that $|p'-p|\leq \varepsilon r+4\sigma sr$. By (\ref{otter4}) there exists $x+y\in (x+\Sigma_h)\cap B(x,2sr)$ such that $|p'-y|< 4\sigma sr$. In fact, since $p\in B_{sr-\varepsilon r-4\sigma sr}$, we get $y\in B_{sr-\varepsilon r}$. By (\ref{otter2}) there exists $x'\in A\cap B(x,r)$ with $|x+y-x'|<\varepsilon r$. But since $y\in B_{sr-\varepsilon r}$, we know $x'\in A\cap B(x,sr)$ and \begin{equation} |x+p -x'| \leq |p-p'|+|p'-y|+|x+y-x'|< 2\varepsilon r+8\sigma sr.\end{equation} Thus \begin{equation} \label{otter6} \dist(x+p, A\cap B(x,sr))< 2\varepsilon r+8\sigma sr\quad\text{for all }x+p\in (x+P)\cap B(x,sr)\end{equation} Having established (\ref{otter5}) and (\ref{otter6}), we conclude \begin{equation}\label{otter7}\theta_A(x,sr)< 2\varepsilon/s+8\sigma\quad\text{provided }\tau+\varepsilon<\delta_*\text{ and } \varepsilon\leq s.\end{equation}

We are ready to choose parameters. Set $\tau=\min(\delta,\delta_*)/2<1$, put $\sigma=\tau/16$ (forcing $s=\sigma \delta_{n,d}^* C_{n,d}^{-1}<1$) and assign $\varepsilon=s\tau/4$. Then $\tau+\varepsilon \leq \delta_*/2+ \delta_*/8<\delta_*$ and $\varepsilon\leq s$. Hence $\theta_A(x,sr)< \tau$ by (\ref{otter7}).
We have proved if $\Theta^{\mathcal{H}_d}_A(x,r)<\varepsilon$ and $\theta_A(x,r)<\tau$, then on a smaller scale $\theta_A(x,sr)<\tau$, as well. For emphasis, we remark again that $s=s(n,d,\delta)<1$.

To finish the lemma, we now suppose that $A\subset\RR^n$, $x\in A$ and $r>0$ satisfy (\ref{emu1}) and  \begin{equation}\theta_A(x,r)<\eta:=s\tau/4.\end{equation} Then, by Lemma \ref{thetaonehalf},  \begin{equation}\theta_A(x,tr)< 4\eta/t< 4\eta/s=\tau\quad\text{for all }s<t\leq 1.\end{equation} Since $\Theta^{\mathcal{H}_d}_A(x,tr)<\varepsilon$ (by (\ref{emu1})) and $\theta_A(x,tr)<\tau$ for all $s<t\leq 1$, the argument above implies $\theta_A(x,str)<\tau$ for all $s<t\leq 1$, or equivalently, \begin{equation} \theta_A(x,tr)<\tau\quad\text{for all }s^2<t\leq 1.\end{equation} By a simple inductive argument, we conclude that $\theta_A(x,tr)<\tau$ for all $0<t\leq 1$. Therefore, since $\tau<\delta/2$, $\sup_{0<r'<r}\theta_A(x,r')<\delta$, as desired.
\end{proof}

If a set $\Gamma$ is locally well-approximated by $\mathcal{H}_d$, then all blow-ups of $\Gamma$ are zero sets of harmonic polynomials of degree at most $d$. If, in addition, $\Gamma$ has the feature that all blow-ups of $\Gamma$ are zero sets of \emph{homogeneous} harmonic polynomials, then we can say more. This is the main result of this section.

\begin{theorem} \label{generalopenthm} Suppose that $\Gamma\subset\RR^n$ is locally well-approximated by $\mathcal{H}_d$ for some $d\geq 1$. If every blow-up of $\Gamma$ is the zero set of a homogeneous harmonic polynomial, then $\Gamma$ can be written as a disjoint union \begin{equation} \Gamma=\Gamma_1\cup \Gamma_s\end{equation} with the following properties: \begin{enumerate}
 \item Every blow-up of $\Gamma$ centered at $x\in\Gamma_1$ is a hyperplane.
 \item Every blow-up of $\Gamma$ centered at $x\in\Gamma_s$ is the zero set of a homogeneous harmonic polynomial of degree at least 2.
 \item The set of flat points $\Gamma_1$ is open in $\Gamma$.
 \item The set of flat points $\Gamma_1$ is locally Reifenberg flat with vanishing constant.
 \item The set of flat points $\Gamma_1$ has Hausdorff dimension $n-1$.
\end{enumerate}
\end{theorem}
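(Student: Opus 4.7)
The plan is to define $\Gamma_1$ as the set of flat points of $\Gamma$ and $\Gamma_s = \Gamma \setminus \Gamma_1$, and then verify (1)--(5) using Lemma~\ref{hlemma}, Lemma~\ref{emu}, Lemma~\ref{thetaonehalf}, Theorem~\ref{polythm}, and Corollary~\ref{vanishdim}.

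Properties (1) and (2) should follow directly. For (1), if $x \in \Gamma_1$ and $B = \Sigma_h$ is a blow-up at $x$ along $r_i \downarrow 0$, then $\theta_B(0, s) = \lim_i \theta_\Gamma(x, sr_i) = 0$ for every $s > 0$, which forces $\deg h = 1$ by Lemma~\ref{hlemma}. For (2), if $x \in \Gamma_s$ admitted a hyperplane blow-up, then $\theta_\Gamma(x, r_i) \to 0$ along some $r_i \downarrow 0$; combined with local well-approximation by $\mathcal{H}_d$, for each $\delta > 0$ I would apply Lemma~\ref{emu} at $(x, r_i)$ to conclude $\theta_\Gamma(x, r') < \delta$ for $r' \leq r_i$, forcing $x$ to be flat and contradicting $x \in \Gamma_s$.

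For (3), fix $x \in \Gamma_1$ and a small target $\delta > 0$. Using the flatness at $x$ and the well-approximation hypothesis on a compact neighborhood of $x$, I find $r > 0$ such that $\theta_\Gamma(x, r)$ is much smaller than $\eta(n, d, \delta)$ (the threshold from Lemma~\ref{emu}) and $\Theta^{\mathcal{H}_d}_\Gamma(z, r'') < \varepsilon(n, d, \delta)$ for every $z \in \Gamma \cap \overline{B(x, r)}$ and $r'' \leq r$. For $y \in \Gamma \cap B(x, r/4)$, the inclusion $B(y, r/2) \subset B(x, r)$ combined with Lemma~\ref{thetaonehalf} gives $\theta_\Gamma(y, r/2) \leq 8\theta_\Gamma(x, r) < \eta(n, d, \delta)$, and Lemma~\ref{emu} then yields $\theta_\Gamma(y, r'') < \delta$ for every $r'' \leq r/2$. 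To upgrade this $\delta$-level estimate to genuine flatness $\theta_\Gamma(y, r'') \to 0$, I would re-run the argument at $y$ via the linear-decay part of Theorem~\ref{polythm}: at a sufficiently small scale $r'' \leq r/2$ the well-approximating harmonic polynomial $h$ at $y$ has zero set close to a hyperplane (since both $\theta_\Gamma(y, r'')$ and $\Theta^{\mathcal{H}_d}_\Gamma(y, r'')$ are small), so Theorem~\ref{polythm} gives $Dh(0) \neq 0$ and $\theta_{\Sigma_h}(0, sr'') < C_{n, d}\, s$ for $s \in (0,1)$; transferring back through the approximation yields a linear bound $\theta_\Gamma(y, \rho) \leq C\rho$ at small scales $\rho$, so $y$ is flat. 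Hence $\Gamma \cap B(x, r/4) \subset \Gamma_1$, proving (3).

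The argument for (3) also shows that $\Gamma$ and $\Gamma_1$ agree in $B(x, r/4)$, so $\theta_{\Gamma_1}(y, r'') = \theta_\Gamma(y, r'')$ for $y$ in that ball and sufficiently small $r''$, and the uniform bound $\theta_\Gamma(y, r'') < \delta$ already established carries over. For any compact $K \subset \Gamma_1$, which is disjoint from the closed set $\Gamma_s$ (hence $\dist(K, \Gamma_s) > 0$), I would cover $K$ by finitely many such neighborhoods to extract a uniform radius $r_0 > 0$ on which $\theta_{\Gamma_1}(y, r'') < \delta$ holds for all $y \in K$ and $r'' \leq r_0$. Since $\delta > 0$ is arbitrary, $\Gamma_1$ is locally Reifenberg flat with vanishing constant, and Corollary~\ref{vanishdim} then delivers $\dim_H \Gamma_1 = n - 1$. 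The main obstacle I anticipate is precisely the linear-decay upgrade in the openness step: a single application of Lemma~\ref{emu} at $y$ only yields $\theta_\Gamma(y, r'') < \delta$ for a fixed $\delta$, so passing from this to $\theta_\Gamma(y, r'') \to 0$ requires rerunning Theorem~\ref{polythm} (or the internals of Lemma~\ref{emu}) at $y$ and carefully tracking the approximating harmonic polynomials at the shifted center.
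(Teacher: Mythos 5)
Your decomposition and your handling of properties (1) and (2) are sound, and you have correctly identified the sticking point: a single run of Lemma~\ref{emu} at a nearby point $y$ only delivers $\theta_\Gamma(y,r'')<\delta$ for one pre-chosen $\delta$, not $\theta_\Gamma(y,r'')\to 0$. The trouble is that your proposed repair does not close this gap. You want to pull the linear decay $\theta_{\Sigma_h}(0,sr'')<C_{n,d}\,s$ back to $\Gamma$, but the Hausdorff closeness you control is $\HD[\Gamma\cap B(y,r''),(y+\Sigma_h)\cap B(y,r'')]<\varepsilon r''$; restricted to $B(y,sr'')$, the \emph{scaled} distance between $\Gamma$ and $y+\Sigma_h$ is of order $\varepsilon/s$, which blows up as $s\downarrow 0$. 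At the new scale $sr''$ you would have to switch to a fresh approximating polynomial (the one furnished by $\Theta^{\mathcal{H}_d}_\Gamma(y,sr'')<\varepsilon$) before applying Theorem~\ref{polythm} again, and then you are back to iterating at a fixed error level $\varepsilon$; you never obtain a decaying bound $\theta_\Gamma(y,\rho)\leq C\rho$ this way. In other words, the transfer is lossy at exactly the rate that would be needed, and ``carefully tracking the approximating polynomials'' does not rescue it.

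The paper sidesteps this by never trying to force $\theta_\Gamma(y,\cdot)\to 0$ through approximation estimates. Instead it fixes the single threshold $\delta=\tfrac12\min_{1\le k\le d}\delta_{n,k}$ once and for all, runs your Lemma~\ref{thetaonehalf} plus Lemma~\ref{emu} step to get $\theta_\Gamma(y,r'')<\delta$ for all small $r''$ and all $y$ in a fixed neighborhood of $x$, and then invokes the \emph{hypothesis that every blow-up of $\Gamma$ is the zero set of a homogeneous harmonic polynomial}. If $\Sigma_h$ is any blow-up of $\Gamma$ at such a $y$, with $h$ homogeneous of degree $k\le d$, then $\theta_{\Sigma_h}(0,1)\le\liminf_i\theta_\Gamma(y,r_i)\le\delta<\delta_{n,k}$; Theorem~\ref{polythm} forces $Dh(0)\neq 0$, which for a homogeneous $h$ forces $k=1$. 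So \emph{every} blow-up at $y$ is a hyperplane, which (by a standard subsequence argument) gives $\theta_\Gamma(y,r)\to 0$, i.e.\ $y$ is flat. This shows a full neighborhood of $x$ lands in $\Gamma_1$ without ever needing $\delta$ to tend to zero, and the same template (now with $\delta=\tau$ arbitrary) gives Reifenberg flatness with vanishing constant on compacta; Corollary~\ref{vanishdim} then yields (5). You have all the right tools in hand; the missing idea is to exploit the homogeneity hypothesis on blow-ups at this precise moment, rather than to chase a pointwise decay rate through the polynomial approximations.
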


\begin{proof} Assume that $\Gamma\subset\RR^n$ is locally well-approximated by $\mathcal{H}_d$ for some $d\geq 1$. Moreover, assume that every blow-up of $\Gamma$ is the zero set of a homogeneous harmonic polynomial. Let $\delta_{n,k}$ $(1\leq k\leq d)$ be the constants from Theorem \ref{polythm}; and let $\varepsilon=\varepsilon(n,d,\delta)$ and $\eta=\eta(n,d,\delta)$ be the constants from Lemma \ref{emu} which correspond to $\delta=\min(\delta_{n,1}, \dots, \delta_{n,d})/2$.  We can partition $\Gamma$ into two sets $\Gamma_1$ and $\Gamma_s$ as follows. Set \begin{equation} \Gamma_1=\left\{x\in\Gamma: \liminf_{r\downarrow 0} \theta_\Gamma(x,r)<\eta/8\right\}\quad\text{and}\quad\Gamma_s=\left\{x\in\Gamma:\liminf_{r\downarrow 0} \theta_\Gamma(x,r)\geq \eta/8\right\}.\end{equation} Then $\Gamma=\Gamma_1\cup\Gamma_s$ and $\Gamma_1\cap\Gamma_s=\emptyset$. Since $\theta_{\Gamma}(x,r_i)\rightarrow 0$ along some sequence $r_i\rightarrow 0$ whenever $\Gamma$ blows-up to a hyperplane, it is clear that every blow-up of $\Gamma$ centered at $x\in\Gamma_s$ must be the zero set of a polynomial of degree at least 2. It remains to show that every blow-up of $\Gamma$ centered at $x\in\Gamma_1$ is a hyperplane; the set $\Gamma_1$ is relatively open in $\Gamma$; and $\Gamma_1$ is locally Reifenberg flat with vanishing constant (thus $\dim_H\Gamma_1=n-1$).

Fix $x_0\in\Gamma_1$. Because $\Gamma$ is locally well-approximated by $\mathcal{H}_d$, there exists $r_0\in(0,1)$ such that $\Theta^{\mathcal{H}_d}_{\Gamma}(x,r)<\varepsilon$ for every $x\in\Gamma\cap B(x_0,1)$ and for all $r\in(0,r_0)$.
Since $x_0\in\Gamma_1$, $\liminf_{r\downarrow 0}\theta_{\Gamma}(x_0,r)<\eta/8$. Hence we can find $r_1\in(0,r_0/2)$ such that $\theta_{\Gamma}(x_0,2r_1)<\eta/8$. Thus $\theta_{\Gamma}(x,r_1)<\eta$ for every $x\in \Gamma\cap B(x_0,r_1)$, by Lemma \ref{thetaonehalf}. Therefore, by Lemma \ref{emu}, \begin{equation}\label{data1}\theta_{\Gamma}(x,r)<\delta\quad\text{for all }x\in\Gamma\cap B(x_0,r_1),\ r\in(0,r_1).\end{equation}
We claim that every blow-up of $\Gamma$ centered at $x\in \Gamma\cap B(x_0,r_1)$ is a hyperplane. Indeed fix $x\in\Gamma\cap B(x_0,r_1)$ and assume $B$ is a blow-up of $\Gamma$ centered at $x$. On one hand, by our assumption on blow-ups of $\Gamma$, there exists a homogeneous harmonic polynomial $h:\RR^n\rightarrow\RR$ with $1\leq \deg h\leq d$ such that $B=\Sigma_h$. Thus there exists a sequence $r_i\rightarrow 0$ such that \begin{equation} \lim_{i\rightarrow\infty} \HD\left[\frac{\Gamma-x}{r_i}\cap B_1,\Sigma_h\cap B_1\right]=0.\end{equation} On the other hand, by (\ref{data1}), $\theta_{\Sigma_h}(0,1)\leq \liminf \theta^1_{\Gamma}(x,r_i)\leq \delta\leq \delta_{n,k}/2$, where $k=\deg h$. By Theorem \ref{polythm}, $Dh(0)\neq 0$. But since $h$ is homogeneous, $Dh(0)\neq 0$ if and only if $k=\deg h=1$ if and only if $\Sigma_h$ is a hyperplane. We have shown that for every $x_0\in\Gamma_1$ there exists $r_1>0$ such that every blow-up of $\Gamma$ centered at $x\in B(x_0,r_1)$ is a hyperplane. In other words, every $x\in\Gamma_1$ is a flat point of $\Gamma$ and $\Gamma_1$ is open in $\Gamma$.

Next we will demonstrate that $\Gamma_1$ is locally Reifenberg flat with vanishing constant. Let $\tau>0$ and let $K\subset\Gamma_1$ compact be given. Let $\varepsilon=\varepsilon(n,d,\tau)>0$ and $\eta=\eta(n,d,\tau)>0$ be the constants from Lemma \ref{emu}. Since $\Gamma_1$ is open and $K$ is compact, we can find $s_0>0$ such that $\Gamma\cap B(x,s)=\Gamma_1\cap B(x,s)$ for all $x\in K$ and for all $s\in(0,s_0)$. And since $\Gamma$ is locally well-approximated by $\mathcal{H}_d$, there exists $s_1\in (0,s_0)$ such that $\Theta^{\mathcal{H}_d}_{\Gamma}(x,s)<\varepsilon$ for every $x\in K$ and $s\in (0,s_1)$. Now for each $x\in K$ there exists $s_x\in(0,s_1)$ with $\theta_{\Gamma}(x,2s_x)<\eta/8$ (since $x\in K\subset\Gamma_1$ is a flat point of $\Gamma$). Hence, by Lemma \ref{thetaonehalf}, $\theta_{\Gamma}(x',s_x)<\eta$ for all $x'\in \Gamma\cap B(x,s_x)$, for all $x\in K$. Thus, by Lemma \ref{emu}, \begin{equation} \label{wardy} \theta_{\Gamma}(x',s')<\tau\quad\text{for all }x'\in \Gamma\cap B(x,s_x),\text{ for all }s'\in(0,s_x),\text{ for all }x\in K.\end{equation} But $K$ is compact, so $K$ admits a finite cover of the form $\{B(x_i,s_{x_i}):x_1,\dots,x_m\in K\}$. Letting $s_*=\min\{s_{x_1},\dots,s_{x_m}\}$, we conclude \begin{equation} \theta_{\Gamma_1}(x,s')=\theta_{\Gamma}(x,s')<\tau\quad\text{for all }x\in K,\text{ for all }s'\in(0,s_*).\end{equation} Thus, since $K\subset\Gamma_1$ was an arbitrary compact subset, $\Gamma_1$ is locally $\tau$-Reifenberg flat. Therefore, since $\tau>0$ was arbitrary, $\Gamma_1$ is locally Reifenberg flat with vanishing constant, as desired.

Finally, by Corollary \ref{vanishdim}, any locally Reifenberg flat set with vanishing constant, and in particular $\Gamma_1$, has Hausdorff dimension $n-1$.\end{proof}

\section{Free Boundary Regularity for Harmonic Measure from Two Sides}
\label{SectFreeBoundary}

The goal of this section is to document the structure of the free boundary for harmonic measure from two sides under weak regularity (Theorem \ref{structthm}). In order to state the result, we must remind the reader of several standard definitions from harmonic analysis (\S6.1). The statement and proof of the structure theorem are then given in \S6.2 and \S6.3. For an introduction to free boundary regularity problems for harmonic measure, we recommend the reader to the book \cite{CKL} by Capogna, Kenig and Lanzani. For recent generalizations to free boundary problems for $p$-harmonic measure, see Lewis and Nystr\"om \cite{LN}.

\subsection{Definitions and Conventions}

In \cite{JK} Jerison and Kenig introduced NTA domains---a natural class of domains on which Fatou type convergence theorems hold for harmonic functions. In the plane, a bounded simply connected domain $\Omega\subset\RR^2$ is an NTA domain if and only if $\Omega$ is a quasidisk (the image of the unit disk under a global quasiconformal mapping of the plane). In higher dimensions, while every quasiball (the image of the unit ball under a global quasiconformal mapping of space) in $\RR^n$, $n\geq 3$, is also a bounded NTA domain, there exist bounded NTA domains homeomorphic to a ball in $\RR^n$ which are not quasiballs. The reader may consult \cite{JK} for more information. Also see \cite{KT97} where Kenig and Toro demonstrate that a domain in $\RR^n$ whose boundary is $\delta$-Reifenberg flat is an NTA domain provided that $\delta<\delta_n$ is sufficiently small.

The definition of NTA domains is based upon two geometric conditions, which are quantitative, scale-invariant versions of openness and path connectedness.

\begin{definition}An open set $\Omega\subset\RR^n$ satisfies the \emph{corkscrew condition} with constants $M>1$ and $R>0$ provided that for every $x\in\partial\Omega$ and $0<r<R$ there exists a \emph{non-tangential point} $A=A(x,r)\in\Omega\cap B(x,r)$ such that $\dist(A,\partial\Omega)> M^{-1}r$.\end{definition}

For $X_1,X_2\in\Omega$ a \emph{Harnack chain} from $X_1$ to $X_2$ is a sequence of closed balls inside $\Omega$ such that the first ball contains $X_1$, the last contains $X_2$, and consecutive balls intersect. The \emph{length} of a Harnack chain is the number of balls in the chain.

\begin{definition}A domain $\Omega\subset\RR^n$ satisfies the \emph{Harnack chain condition} with constants $M>1$ and $R>0$ if for every $x\in\partial\Omega$ and $0<r<R$ when $X_1,X_2\in \Omega\cap B(x,r)$ satisfy \begin{equation} \min_{j=1,2}\dist(X_j,\partial\Omega)>\varepsilon\quad\text{and}\quad|X_1-X_2|< 2^k\varepsilon\end{equation} then there is a Harnack chain from $X_1$ to $X_2$ of length $Mk$ such that the diameter of each ball is bounded below by $M^{-1}\min_{j=1,2}\dist(X_j,\partial\Omega)$.\end{definition}

\begin{definition}\label{NTAdefn}A domain $\Omega\subset\RR^n$ is \emph{non-tangentially accessible} or \emph{NTA} if there exist $M>1$ and $R>0$ such that (i) $\Omega$ satisfies the corkscrew and Harnack chain conditions, (ii) $\RR^n\setminus\overline{\Omega}$ satisfies the corkscrew condition. If $\partial\Omega$ is unbounded then we require $R=\infty$.\end{definition}

The exterior corkscrew condition guarantees that an NTA domain is \emph{regular} for the Dirichlet problem. Thus for every choice $f\in C_c(\partial\Omega)$ of continuous boundary data with compact support there exists a unique function $u\in C(\overline{\Omega})\cap C^2(\Omega)$ such that $u$ is harmonic in $\Omega$ and $u=f$ on $\partial\Omega$. On a regular domain $\Omega$, \emph{harmonic measure} $\omega^X$ with \emph{pole} at $X\in\Omega$ is the unique probability measure supported on $\partial\Omega$ such that \begin{equation*} u(X)=\int_{\partial\Omega} f(Q)\,d\omega^X(Q)\quad\text{for all }f\in C_c(\partial\Omega).\end{equation*} Since $\omega^{X}\ll\omega^{Y}\ll\omega^{X}$ for different choices of pole $X,Y\in\Omega$ (by Harnack's inequality for positive harmonic functions), we may drop the pole from our notation and refer to the harmonic measure $\omega$ ($=\omega^{X_0}$) of $\Omega$ (with respect to some fixed, unspecified pole $X_0\in\Omega$).

A nice feature of harmonic measure on NTA domains is that the harmonic measure $\omega$ is locally doubling; see Lemmas 4.8 and 4.11 in \cite{JK}. In particular, this property implies that $\omega(\partial\Omega\cap B(x,r))>0$ for every location $x\in\partial\Omega$ and every scale $r>0$.

On unbounded NTA domains there is a related notion of harmonic measure with pole at infinity, whose existence is guaranteed by the following lemma. \begin{lemma}[\cite{KT99} Lemma 3.7, Corollary 3.2]\label{omegaInfiniteDoubling} Let $\Omega\subset\RR^n$ be an unbounded NTA domain. There exists a doubling Radon measure $\omega^\infty$ supported on $\partial\Omega$ satisfying \begin{equation} \int_{\partial\Omega} \varphi\, d\omega^\infty = \int_\Omega u\Delta\varphi\quad\text{for all }\varphi\in C_c^\infty(\RR^n)\end{equation} where \begin{equation} \left\{\begin{array}{rl} \Delta u=0 &\text{in }\Omega\\ u>0 &\text{in }\Omega\\ u=0 &\text{on }\partial\Omega.\end{array}\right.\end{equation} The measure $\omega^\infty$ and Green function $u$ are unique up to multiplication by a positive scalar. We call $\omega^\infty$ a harmonic measure of $\Omega$ with pole at infinity.\end{lemma}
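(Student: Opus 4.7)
The plan is to realize $\omega^\infty$ as a limit of the finite-pole harmonic measures $\omega^{X_k}$ as $X_k$ escapes to infinity, and to realize $u$ as a corresponding rescaled limit of Green functions. Fix a reference point $X_0 \in \Omega$ and let $G(X,Y)$ denote the Green function of $\Omega$, whose existence is guaranteed by the exterior corkscrew condition. For a sequence $X_k \in \Omega$ with $|X_k|\to\infty$, I would consider
\begin{equation*}
  u_k(Y) = \frac{G(X_k, Y)}{G(X_k, X_0)}, \qquad Y \in \Omega \setminus \{X_k\}.
\end{equation*}
Each $u_k$ is positive harmonic in $\Omega \setminus \{X_k\}$, vanishes continuously on $\partial\Omega$, and is normalized by $u_k(X_0)=1$.

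First I would establish compactness of $\{u_k\}$. Using the Harnack chain condition to chain $X_0$ to a point $Y \in \Omega$, the normalization $u_k(X_0) = 1$ yields uniform bounds $u_k(Y) \leq C(Y)$ valid once $X_k$ is far enough from $Y$. Combined with interior derivative estimates for harmonic functions, Arzel\`a--Ascoli produces a subsequence converging locally uniformly in $\Omega$ to a positive harmonic function $u$ with $u(X_0)=1$. Uniform H\"older continuity of the Green function up to the boundary on NTA domains (independent of the pole, as long as the pole stays far from the boundary point in question) ensures $u$ extends continuously to $\partial\Omega$ with boundary value $0$. One then defines $\omega^\infty$ by
\begin{equation*}
  \int_{\partial\Omega} \varphi\, d\omega^\infty = \int_\Omega u\, \Delta\varphi, \qquad \varphi \in C_c^\infty(\RR^n);
\end{equation*}
this is the distributional Laplacian of the zero extension of $u$, hence a non-negative Radon measure supported on $\partial\Omega$.

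Next I would verify the doubling property. For fixed $x \in \partial\Omega$ and scale $r > 0$, the finite-pole harmonic measures satisfy an NTA doubling inequality $\omega^{X_k}(B(x,2r)) \leq C\,\omega^{X_k}(B(x,r))$, with $C$ depending only on the NTA constants, provided $X_k$ lies outside $B(x,Mr)$ for some large $M$ (see \cite{JK}). The Riesz-type identity relating $u_k$ to $\omega^{X_k}$ transfers to the limit along the chosen subsequence, and the doubling estimate passes to $\omega^\infty$.

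Finally, for uniqueness up to a positive scalar, given two such pairs $(u_1, \omega_1^\infty)$ and $(u_2, \omega_2^\infty)$, the boundary Harnack principle for NTA domains (\cite{JK}) shows that the ratio $u_1/u_2$ extends H\"older-continuously to $\overline\Omega$. Iterating the boundary Harnack inequality at shrinking scales about an arbitrary boundary point, $u_1/u_2$ takes the same limit at every boundary point, and the maximum principle then forces $u_1/u_2$ to be constant on $\Omega$; the defining integral identity propagates the same proportionality to $\omega_1^\infty = \lambda\, \omega_2^\infty$. The main obstacle is propagating the boundary Harnack principle and the doubling estimates for $\omega^{X_k}$ to \emph{all} scales uniformly in $k$, which is precisely where the unbounded NTA hypothesis ($R=\infty$ in Definition \ref{NTAdefn}) carries the weight.
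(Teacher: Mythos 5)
The paper does not prove this lemma; it is imported verbatim from Kenig--Toro \cite{KT99} (Lemma 3.7 and Corollary 3.2), and the text simply cites that reference. Your sketch --- extracting a locally uniform limit of normalized Green functions $G(X_k,\cdot)/G(X_k,X_0)$ as $|X_k|\to\infty$ via Harnack chains and boundary H\"older estimates, identifying $\omega^\infty$ with the Riesz measure of the zero extension of the limit $u$, inheriting doubling from the finite-pole NTA doubling inequality, and pinning down uniqueness via the scale-invariant boundary Harnack principle at all scales (using $R=\infty$) --- is precisely the construction carried out in \cite{KT99}, so you have reproduced the reference's argument rather than the paper's (the paper contains none). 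One small imprecision: in the uniqueness step, the constancy of $u_1/u_2$ does not follow by ``iterating at shrinking scales'' at a single boundary point together with the maximum principle (the ratio is not harmonic); the standard route is to use the oscillation-decay form of boundary Harnack from scale $r$ up to ever larger scales $R\to\infty$, which is available exactly because the unbounded NTA hypothesis gives $R=\infty$, or alternatively the extremal-multiple argument $\alpha=\sup\{\lambda : u_1\geq\lambda u_2\}$.
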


Below we only work with domains whose interior and exterior are both NTA domains. Note that as a consequence of the corkscrew conditions, the interior $\Omega^+$ and exterior $\Omega^-$ of a 2-sided NTA domain $\Omega$ have a common boundary: $\partial\Omega^+=\partial\Omega=\partial\Omega^-$.

\begin{definition}A domain $\Omega\subset\RR^n$ is \emph{2-sided NTA} if $\Omega^+=\Omega$ and $\Omega^-=\RR^n\setminus\overline{\Omega}$ are both NTA with the same constants; i.e.\ there exists $M>1$ and $R>0$  such that $\Omega^\pm$ satisfy the corkscrew and Harnack chain conditions. When $\partial\Omega$ is unbounded, we require  $R=\infty$.\end{definition}

We also need the following classes of functions.

\begin{definition}Let $\Omega\subset\RR^n$ be a NTA domain. We say that $f\in L^2_{\mathrm{loc}}(d\omega)$ has \emph{bounded mean oscillation} with respect to the harmonic measure $\omega$ and write $f\in\BMO(d\omega)$ if \begin{equation}\sup_{r>0}\sup_{Q\in\partial\Omega}\left(\omega(B(Q,r))^{-1}\int_{B(Q,r)} |f-f_{Q,r}|^2d\omega\right)^{1/2}<\infty\end{equation} where $f_{Q,r}=\omega(B(Q,r))^{-1}\int_{B(Q,r)}fd\omega$ denotes the average of $f$ over the ball.\end{definition}

\begin{definition}\label{defnVMO}Let $\Omega\subset\RR^n$ be a NTA domain. Let $\VMO(d\omega)$ denote the closure of the set of bounded uniformly continuous functions on $\partial\Omega$ in $\BMO(d\omega)$. If $f\in\VMO(d\omega)$, then we say $f$ has \emph{vanishing mean oscillation} with respect to the harmonic measure $\omega$.\end{definition}

\subsection{Structure Theorem}

The following statement incorporates and extends results which first appeared in Kenig and Toro \cite{KT06} and Badger \cite{Badger1}. If $\Omega^+$ and $\Omega^-$ are unbounded, then we allow $\omega^+$ (harmonic measure on $\Omega^+$) and $\omega^-$ (harmonic measure on $\Omega^-$) to have finite poles or poles at infinity; otherwise we require that $\omega^+$ and $\omega^-$ have finite poles. The new aspect of the theorem presented here is statement (ii) about the flat points $\Gamma_1$.

\begin{theorem}\label{structthm} Assume that $\Omega\subset\RR^n$ is a 2-sided NTA domain. If $\omega^+\ll\omega^-\ll\omega^+$ and $\log \frac{d\omega^-}{d\omega^+}\in\VMO(d\omega^+)$, then there is $d_0>0$ such that $\partial\Omega$ is locally well-approximated by $\mathcal{H}_{d_0}$. Moreover, $\partial\Omega$ can be partitioned into sets $\Gamma_d$ ($1\leq d\leq d_0$), \begin{equation} \partial\Omega=\Gamma_1\cup\dots\cup\Gamma_{d_0},\quad i\neq j \Rightarrow \Gamma_i\cap\Gamma_j=\emptyset,\end{equation} with the following properties: \begin{enumerate}
\item Every blow-up of $\partial\Omega$ centered a point $x\in\Gamma_d$ is the zero set of a homogeneous harmonic polynomial of degree $d$ which separates $\RR^n$ into two components.
\item The set of flat points $\Gamma_1$ is open and dense in $\partial\Omega$; $\Gamma_1$ is locally Reifenberg flat with vanishing constant; and $\Gamma_1$ has Hausdorff dimension $n-1$.
\item The set of ``singularities" $\partial\Omega\setminus\Gamma_1=\Gamma_2\cup\dots\cup\Gamma_{d_0}$ is closed and has harmonic measure zero: $\omega^\pm(\partial\Omega\setminus\Gamma_1)=0$.\end{enumerate}
\end{theorem}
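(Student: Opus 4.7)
The plan is to combine the structure theorem from \cite{Badger1} with Theorem \ref{generalopenthm} of the present paper. First I would invoke the structure theorem of \cite{Badger1} (extending \cite{KT06}) to produce the uniform degree bound $d_0$, the decomposition $\partial\Omega=\Gamma_1\cup\dots\cup\Gamma_{d_0}$, the identification of blow-ups at points of $\Gamma_d$ with zero sets of homogeneous harmonic polynomials of degree $d$, and the vanishing $\omega^\pm(\partial\Omega\setminus\Gamma_1)=0$ of statement (iii). That the limiting polynomials separate $\RR^n$ into two components is inherited from the two-sided corkscrew condition, which persists under blow-up: for every $x\in\partial\Omega$ and $r>0$ both $\Omega^\pm$ contain corkscrew points in $B(x,r)$, so any Hausdorff-limit zero set $\Sigma_h$ must have interior corkscrew points on both sides in every ball centered at the origin, forcing $\RR^n\setminus\Sigma_h$ to have at least two components; by standard facts about zero sets of harmonic polynomials this is then exactly two.

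Next I would upgrade the pointwise blow-up description to the uniform statement that $\partial\Omega$ is locally well-approximated by $\mathcal{H}_{d_0}$. This is a compactness argument: if uniform $\delta$-closeness failed on some compact set $K\subset\partial\Omega$, one extracts sequences $x_j\in K$ and $r_j\downarrow 0$ with $\Theta^{\mathcal{H}_{d_0}}_{\partial\Omega}(x_j,r_j)>\delta$, passes to a subsequential limit $x_j\to x\in K$, applies Blaschke selection (Lemma \ref{selectthm}) to the rescaled sets $r_j^{-1}(\partial\Omega-x_j)$, and invokes the blow-up identification at $x$ to produce a contradiction. With local well-approximation in hand, Theorem \ref{generalopenthm} applies directly with $\Gamma=\partial\Omega$ and $d=d_0$: its conclusions yield exactly that $\Gamma_1$ is open in $\partial\Omega$, locally Reifenberg flat with vanishing constant, and of Hausdorff dimension $n-1$.

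The remaining topological content of (ii) and (iii) is then essentially free. Closedness of $\partial\Omega\setminus\Gamma_1$ in $\partial\Omega$ is the complement of openness of $\Gamma_1$. Density of $\Gamma_1$ in $\partial\Omega$ follows from (iii) combined with the local doubling of $\omega^\pm$ and its full support on $\partial\Omega$ (see \cite{JK}): if $\Gamma_1$ omitted a relatively open set $\partial\Omega\cap B(x,r)$, that set would be contained in $\partial\Omega\setminus\Gamma_1$ and have positive harmonic measure, contradicting $\omega^\pm(\partial\Omega\setminus\Gamma_1)=0$.

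The main obstacle is the handoff step: verifying that the uniform local $\mathcal{H}_{d_0}$-approximation hypothesis of Theorem \ref{generalopenthm} really follows from the a priori pointwise blow-up description in \cite{Badger1}. This requires careful normalization of harmonic polynomial coefficients along the extracted subsequence so that Blaschke selection on the rescaled zero sets pairs correctly with convergence in coefficients of the defining polynomials; this is exactly the kind of issue that the machinery of \S\ref{SectRelativeSizes}--\S\ref{SectConvergenceZeroSets} (in particular Lemma \ref{zetacts} and Corollary \ref{hpzeroconverge}) was engineered to handle, so I expect the argument to go through cleanly once the normalizations are set up.
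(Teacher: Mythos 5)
Your overall route matches the paper's: invoke the prior structure results, upgrade the blow-up description to uniform local $\mathcal{H}_{d_0}$-approximation by compactness, then apply Theorem~\ref{generalopenthm}, and finish density/closedness via the doubling property of harmonic measure and openness of $\Gamma_1$. But there is one genuine imprecision in the compactness step that would become a gap if left as written.

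Your contradiction argument extracts $x_j\in K$ with $x_j\to x$ and $r_j\downarrow 0$, applies Blaschke selection to the sets $r_j^{-1}(\partial\Omega - x_j)$, and then ``invokes the blow-up identification at $x$.'' The limit you produce this way is a \emph{pseudo-blow-up} (both the center and the scale move), not a blow-up at the fixed center $x$ in the sense of Definition~\ref{blowupdefn}. The structure theorem of \cite{Badger1} that you cite only identifies fixed-center blow-ups $\lim r_i^{-1}(\partial\Omega - x)$ with zero sets of homogeneous harmonic polynomials; it says nothing a priori about $\lim r_j^{-1}(\partial\Omega - x_j)$. For the compactness argument to close, you need the stronger pseudo-blow-up theorem—specifically Theorems~4.2 and~4.4 of \cite{KT06}—which asserts that from any such sequence $(x_j,r_j)$ one can extract a subsequence along which $r_{j_k}^{-1}(\partial\Omega - x_{j_k})$ converges locally in Hausdorff distance to the zero set of a (not necessarily homogeneous) harmonic polynomial of degree at most $d_0$. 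That is precisely what the paper invokes at this step. So the fix is simply to re-source this input: cite \cite{KT06} for pseudo-blow-ups rather than trying to bootstrap from the pointwise blow-up description in \cite{Badger1}. Your stated worry about ``careful normalization of harmonic polynomial coefficients'' is a red herring—once you quote the correct pseudo-blow-up result, the convergence of the rescaled zero sets to $\Sigma_h$ is already part of its conclusion, and no extra normalization argument is needed in this part of the proof.

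Two smaller remarks. Your corkscrew heuristic for why the limiting polynomial's zero set separates $\RR^n$ into two components is a reasonable sketch, but it is redundant: that separation property is already part of Theorem~1.3 in \cite{Badger1}, which you are invoking anyway. Also, ``at least two components implies exactly two'' is not a ``standard fact'' for arbitrary harmonic polynomial zero sets—it holds here because the two-sided NTA structure persists, and is part of what \cite{Badger1} proves; I would not leave it as a throwaway.
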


\begin{remark}In Theorem \ref{structthm} the phrase ``separates $\RR^n$ into two components'' means that if the zero set $\Sigma_h$ of a homogeneous harmonic polynomial $h$ is a blow-up of $\partial\Omega$ then the open set $\RR^n\setminus\Sigma_h$ has exactly two connected components. The existence of polynomials with this separation property depends on the dimension $n$. When $n=2$, the zero set $\Sigma_h$ of a homogeneous harmonic polynomial separates $\RR^2$ into two components if and only if $\deg h=1$. When $n=3$, Lewy \cite{Lewy} showed that $\Sigma_h$ can separate $\RR^3$ into two components only if $\deg h$ is odd. Thus the separation condition on $\Sigma_h$ restricts the existence of the sets $\Gamma_k$  in low dimensions.\end{remark}

\begin{corollary} If $\Omega\subset\RR^2$ satisfies the hypothesis of the Theorem \ref{structthm}, then $\partial\Omega=\Gamma_1$.\end{corollary}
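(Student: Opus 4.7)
The plan is to apply Theorem \ref{structthm} in the plane and show that all of the sets $\Gamma_d$ with $d\geq 2$ must be empty, leaving $\partial\Omega=\Gamma_1$. The key ingredient is the classification of zero sets of homogeneous harmonic polynomials in $\RR^2$ combined with the separation property built into statement (i) of the structure theorem.

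First I would recall the explicit form of homogeneous harmonic polynomials in two variables: in polar coordinates, any homogeneous harmonic polynomial $h:\RR^2\to\RR$ of degree $k\geq 1$ has the form $h(r\cos\theta,r\sin\theta)=r^k(a\cos k\theta+b\sin k\theta)$ for some $(a,b)\neq(0,0)$. Its zero set $\Sigma_h$ is therefore the union of $k$ distinct lines through the origin, meeting at equal angles $\pi/k$, and $\RR^2\setminus\Sigma_h$ consists of exactly $2k$ open sectors. For $k=1$ this is a single line separating $\RR^2$ into two half-planes, while for $k\geq 2$ the complement has $2k\geq 4$ connected components.

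Next I would invoke Theorem \ref{structthm}(i): every $x\in\Gamma_d$ has the property that every blow-up of $\partial\Omega$ at $x$ is the zero set of a homogeneous harmonic polynomial of degree $d$ \emph{which separates $\RR^n=\RR^2$ into exactly two components}. By the classification above, no such polynomial exists for $d\geq 2$, so $\Gamma_d=\emptyset$ for all $d\geq 2$. Combined with the decomposition $\partial\Omega=\Gamma_1\cup\Gamma_2\cup\dots\cup\Gamma_{d_0}$, this forces $\partial\Omega=\Gamma_1$.

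There is essentially no obstacle here; the statement is really a corollary of the planar rigidity of harmonic polynomials (the fact, already recorded in the remark following Theorem \ref{structthm} citing Lewy \cite{Lewy}, that in $\RR^2$ only degree one homogeneous harmonic polynomials have zero sets separating the plane into two components). I would simply make that observation explicit and cite the decomposition in Theorem \ref{structthm} to finish.
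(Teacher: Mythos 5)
Your proof is correct and follows exactly the paper's intended argument: the remark immediately preceding the corollary already records that in $\RR^2$ a homogeneous harmonic polynomial's zero set separates the plane into two components only when the degree is $1$, and combining that with Theorem \ref{structthm}(i) forces $\Gamma_d=\emptyset$ for every $d\geq 2$. Your explicit polar-coordinate classification simply fills in the elementary detail the paper leaves implicit.
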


\begin{corollary} If $\Omega\subset\RR^3$ satisfies the hypothesis of the Theorem \ref{structthm}, then $d_0\geq 1$ is an odd integer and $\partial\Omega=\Gamma_1\cup\Gamma_3\cup\dots\cup\Gamma_{d_0}$.
\end{corollary}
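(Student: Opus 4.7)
The plan is a direct deduction from Theorem~\ref{structthm}(i) together with Lewy's separation theorem, which is cited in the remark immediately following the statement of Theorem~\ref{structthm}. By Theorem~\ref{structthm}, we obtain a partition $\partial\Omega=\Gamma_1\cup\dots\cup\Gamma_{d_0}$ with the property that every blow-up of $\partial\Omega$ centered at a point $x\in\Gamma_d$ is the zero set $\Sigma_{h_x}$ of some homogeneous harmonic polynomial $h_x:\RR^3\rightarrow\RR$ of degree exactly $d$, with the crucial extra feature that $\RR^3\setminus\Sigma_{h_x}$ has exactly two connected components. Lewy's theorem asserts that in $\RR^3$ no homogeneous harmonic polynomial of even degree can have this separation property.

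From here the argument is short. I would argue by contradiction, or equivalently by direct elimination: suppose $d\geq 2$ is even and $\Gamma_d\neq\emptyset$. Picking any $x\in\Gamma_d$, the existence of a blow-up at $x$ (guaranteed by Blaschke's selection theorem, Lemma~\ref{selectthm}) would produce a homogeneous harmonic polynomial $h_x$ of even degree $d$ whose zero set separates $\RR^3$ into exactly two components, contradicting Lewy's result. Hence $\Gamma_d=\emptyset$ for every even $d\geq 2$, so the structure theorem's partition reduces to $\partial\Omega = \Gamma_1\cup\Gamma_3\cup\Gamma_5\cup\dots$.

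To finish, I would take $d_0$ in the conclusion to mean the largest index $d$ for which $\Gamma_d\neq\emptyset$; this quantity is well-defined and bounded by the $d_0$ of Theorem~\ref{structthm}, and it is at least $1$ because $\Gamma_1$ is open and dense in $\partial\Omega$ by part (ii) of that theorem. By the previous paragraph, this $d_0$ is either $1$ or odd and at least $3$, so in every case $d_0\geq 1$ is odd and $\partial\Omega=\Gamma_1\cup\Gamma_3\cup\dots\cup\Gamma_{d_0}$, as claimed.

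There is no real analytic or geometric obstacle here, as the heavy lifting has already been done inside Theorem~\ref{structthm}: the identification of blow-ups as \emph{homogeneous} harmonic polynomials with the separation property, the finiteness of the number of degrees that appear, and the density of $\Gamma_1$. The corollary is essentially a dimension-specific bookkeeping statement that invokes Lewy's classification to rule out even-degree strata. The only point of care is to make sure the $d_0$ appearing in the conclusion is the \emph{effective} maximum degree (the largest $d$ with $\Gamma_d\neq\emptyset$), rather than merely an upper bound, so that the parity assertion is sharp.
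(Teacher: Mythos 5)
Your argument is correct and is exactly the route the paper intends: it deduces the corollary directly from Theorem~\ref{structthm}(i) (blow-ups at $\Gamma_d$ points are zero sets of degree-$d$ homogeneous harmonic polynomials that separate $\RR^n$ into two components) combined with Lewy's theorem, recalled in the remark following Theorem~\ref{structthm}, that in $\RR^3$ such separation forces odd degree. You also correctly flag the one interpretive point---that the $d_0$ in the corollary must be read as the largest $d$ with $\Gamma_d\neq\emptyset$ rather than the a priori upper bound inherited from the Kenig--Toro compactness argument---which is needed for the parity claim to be meaningful.
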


\begin{example}\label{badexample} Consider the homogeneous harmonic polynomial $h:\RR^n\rightarrow \RR$ ($n\geq 3$), \begin{equation}h(X)=X_1^2(X_2-X_3)+X_2^2(X_3-X_1)+X_3^2(X_1-X_2)-X_1X_2X_3.\end{equation} Then the
domain $\Omega=\{X\in\RR^n:h(X)>0\}$ is a 2-sided NTA domain; in particular, $\partial\Omega=\{X\in\RR^n:h(X)=0\}=\Sigma_h$ separates $\RR^n$ into two components (see Figure 1.1). Let $\omega^+$ and $\omega^-$ denote harmonic measures of $\Omega^+$ and $\Omega^-$ with pole at infinity. Then there exists a constant $c>0$ such that $\omega^+=c\omega^-$, $\log f_\Omega\equiv 0$ and \begin{equation} \frac{\partial\Omega-X}{r}=\Sigma_h\quad\text{for all } X=(0,0,0,X_4,\dots,X_n)\text{ and }r>0.\end{equation} In particular, $\Sigma_h$ is a blow-up of $\partial\Omega$ at the origin. Thus $0\in\Gamma_3$ and non-planar blow-ups of the boundary can appear even when $\log f_\Omega$ is real-analytic. Furthermore, for all $n\geq 3$, this example shows that it is possible for the set of ``singularities''  $\partial\Omega\setminus\Gamma_1=\Gamma_2\cup\dots\cup\Gamma_{d_0}$ to have Hausdorff dimension $\geq n-3$.
\end{example}

\begin{remark} To our knowledge, the first explicit example of a non-planar zero set of a harmonic polynomial dividing space into two components was given by Szulkin \cite{Sz}.\end{remark}

An obvious modification of the domain $\Omega$ in Example \ref{badexample} shows that:

\begin{proposition} The zero set $\Sigma_h$ of a harmonic polynomial $h:\RR^n\rightarrow\RR$ appears as a blow-up of $\partial\Omega$ for some $\Omega\subset\RR^n$ satisfying the hypothesis of Theorem \ref{structthm} if and only if $h$ is homogeneous and $\Sigma_h$ separates $\RR^n$ into two components.\end{proposition}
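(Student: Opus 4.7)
The proposition has two implications. The forward direction is immediate from Theorem~\ref{structthm}(i): any blow-up of $\partial\Omega$ centered at a point of $\Gamma_d$ is the zero set of a homogeneous harmonic polynomial of degree $d$ that separates $\RR^n$ into two components, so these geometric properties of $\Sigma_h$ are forced whenever $\Sigma_h$ appears as a blow-up. For the converse, given a homogeneous harmonic polynomial $h$ whose zero set separates $\RR^n$ into two components, I would directly generalize the construction in Example~\ref{badexample}: set $\Omega^+ = \{h > 0\}$ and $\Omega^- = \{h < 0\}$, obtaining complementary unbounded open sets with common boundary $\partial\Omega^+ = \Sigma_h = \partial\Omega^-$.

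Three things must be checked to verify that $\Omega = \Omega^+$ satisfies the hypothesis of Theorem~\ref{structthm}: (a) $\Omega$ is 2-sided NTA; (b) the harmonic measures $\omega^\pm$ of $\Omega^\pm$ with pole at infinity are mutually absolutely continuous with $\log(d\omega^-/d\omega^+) \in \VMO(d\omega^+)$; and (c) $\Sigma_h$ arises as a blow-up of $\partial\Omega$ at some point. Point (c) is essentially tautological: since $h$ is homogeneous of degree $d$, we have $r^{-1}\Sigma_h = \Sigma_h$ for every $r > 0$, so $\Sigma_h$ is a blow-up of $\partial\Omega$ at the origin. For (b), the positive and negative parts $h^\pm := \max(\pm h, 0)$ are nonnegative, harmonic on $\Omega^\pm$, and vanish continuously on $\partial\Omega$; Lemma~\ref{omegaInfiniteDoubling} identifies them as Green functions with pole at infinity for $\Omega^\pm$, giving $\int \varphi \, d\omega^+ = \int_{\RR^n} h^+ \Delta\varphi$ and $\int \varphi \, d\omega^- = \int_{\RR^n} h^- \Delta\varphi$ for $\varphi \in C_c^\infty(\RR^n)$. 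Since $h = h^+ - h^-$ is smooth and harmonic on all of $\RR^n$, integration by parts yields $\int h \Delta\varphi = 0$ for every such $\varphi$, so $\int h^+ \Delta\varphi = \int h^- \Delta\varphi$ and consequently $\omega^+ = \omega^-$ for this choice of normalization. Hence $\log(d\omega^-/d\omega^+) \equiv 0 \in \VMO(d\omega^+)$.

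The main obstacle is (a). The interior and exterior corkscrew conditions, as well as the Harnack chain condition, must be established for $\Omega^\pm$, and this is delicate near the singular locus $\{x \in \Sigma_h : Dh(x) = 0\}$ where the zero set can fail to be a Lipschitz graph. The scale-invariance furnished by homogeneity of $h$ reduces the problem to a single scale on the unit sphere: the cross-sections $\Omega^\pm \cap S^{n-1}$ are open semi-algebraic subsets of $S^{n-1}$ with common boundary $\Sigma_h \cap S^{n-1}$, and both are nonempty by the separation hypothesis. Compactness of $S^{n-1}$ together with the semi-algebraic stratification of $\Sigma_h \cap S^{n-1}$ (whose singular locus has strictly smaller dimension) should produce uniform corkscrew constants on the sphere, which then propagate to all scales in $\RR^n$ by positive dilations; an analogous scaling argument handles the Harnack chain condition. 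Once (a) is in place the construction is complete, and $\Omega$ satisfies every hypothesis of Theorem~\ref{structthm}.
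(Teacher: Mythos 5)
Your route is essentially the paper's: necessity is read off from Theorem~\ref{structthm}(i), and sufficiency is proved by taking $\Omega=\{h>0\}$ and checking it satisfies the hypotheses. Your verification of the harmonic measure identity is actually more explicit than the published proof: identifying $h^\pm$ as Green functions with pole at infinity via Lemma~\ref{omegaInfiniteDoubling} and using $\int h\,\Delta\varphi=0$ to conclude $\omega^+=\omega^-$ (with that normalization), hence $\log(d\omega^-/d\omega^+)\equiv 0\in\VMO$, is correct and fills in a step the paper merely asserts when it writes that $\Omega$ ``satisfies the hypothesis of Theorem~\ref{structthm}.''

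The step you flag as the main obstacle---that $\Omega^\pm$ are NTA domains with uniform constants---is a genuine gap in your argument, but it is equally unproved in the paper, which simply asserts the 2-sided NTA property both here and in Example~\ref{badexample}. Your sketch via compactness of $S^{n-1}$ and semi-algebraic stratification is only a heuristic: the homogeneity of $h$ gives scale-invariance about the origin, but at an off-origin singular point $x_0\in\Sigma_h\cap S^{n-1}$ dilation does not fix $x_0$, and the small-scale geometry of $\Sigma_h$ near $x_0$ is governed by the lowest-degree homogeneous Taylor part of $h$ at $x_0$ (which need not resemble $h$ itself), so the corkscrew and Harnack chain constants at such points do not follow from the single-scale picture on the sphere. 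A complete proof would need either a careful stratification/induction on dimension or a citation. That said, you have been no less rigorous than the published version on this point, and more rigorous on the measure-theoretic hypotheses.
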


\begin{proof} Necessity was established by the Theorem \ref{structthm}, so it remains to check sufficiency. Let $h:\RR^n\rightarrow\RR$ be a homogeneous harmonic polynomial such that $\Sigma_h$ separates $\RR^n$ into two components. Then $\Omega=\{X\in\RR^n:h(X)>0\}$ is a 2-sided NTA domain which satisfies the hypothesis of Theorem \ref{structthm}. Moreover, since \begin{equation}\frac{\partial\Omega}{r}=\Sigma_h\quad\text{for all }r>0,\end{equation} $\Sigma_h$ is the unique blow-up of $\partial\Omega=\Sigma_h$ at the origin.\end{proof}

Several questions about the sets in the decomposition in Theorem \ref{structthm} remain open. The first pair of questions involve the singularities in the boundary.

\begin{problem}\label{sprob1} Is $\Gamma_d$ a closed set for each $d\geq 2$? \end{problem}

\begin{problem}\label{sprob2} Find a sharp upper bound on the Hausdorff dimension of $\partial\Omega\setminus\Gamma_1$.\end{problem}

\begin{remark}We expect that the resolution of Problem \ref{sprob1} is tied to the question posed in Remark \ref{sharmseparate}. And based on Example \ref{badexample}, we conjecture that $\dim_H \partial\Omega\setminus\Gamma_1\leq n-3$. (One might predict $\dim_H \partial\Omega\setminus\Gamma_1\leq n-2$, but we believe the requirement that $\Sigma_h$ separate $\RR^n$ into two components forces the smaller upper bound.)\end{remark}

The next problem is related to a conjecture of Bishop in \cite{Bishop} about the rectifiability of harmonic measure in dimensions $n\geq 3$.

\begin{problem}\label{sprob3} Is it always possible to decompose $\Gamma_1$ as $\Gamma_1=\Gamma_1^{\Good}\cup\Gamma_1^{\Null}$ so that $\Gamma_1^{\Good}$ is an $(n-1)$-rectifiable set and $\omega^\pm\ll \mathcal{H}^{n-1}\res \Gamma_1^{\Good}$, and so that $\omega^\pm(\Gamma_1^{\Null})=0$?
\end{problem}

\begin{remark} The answer to Problem \ref{sprob3} is yes under the additional assumption that $\mathcal{H}^{n-1}\res\partial\Omega$ is a Radon measure (e.g.\ if $\mathcal{H}^{n-1}(\partial\Omega)<\infty$). For instance, one can verify this assertion by combining a recent result of Kenig, Preiss and Toro \cite{KPT} (see Corollary 4.2) with a recent result of Badger \cite{Badger2} (see Theorem 1.2).\end{remark}

A final set of open problems concern the question of higher regularity. If one assumes extra regularity on the logarithm of the two-sided kernel $f=d\omega^-/d\omega^+$ beyond $\VMO$, then do the flat points $\Gamma_1$ have extra regularity beyond being locally Reifenberg flat with vanishing constant? For example,

\begin{problem} \label{sprob4} If $\log \frac{d\omega^-}{d\omega^+}\in C^\infty(\partial\Omega)$, then is $\Gamma_1$ locally the $C^\infty$ image of a hyperplane? \end{problem}

One can ask a similar (if more difficult) question at the singularities. For example,

\begin{problem} \label{sprob5} If $\log\frac{d\omega^-}{d\omega^+}\in C^\infty(\partial\Omega)$ and $x\in\Gamma_d$ ($d\geq 2$), then is $\partial\Omega$ near $x$ locally the $C^\infty$ image of some homogeneous harmonic polynomial of degree $d$ which separates space into two components?
\end{problem}

\begin{remark} A resolution of the parametrization problem posed in Problem \ref{sprob5} likely depends on the answer to the question in Remark \ref{sharmseparate}. \end{remark}

\subsection{Proof of Theorem \ref{structthm}}

The structure theorem (Theorem \ref{structthm}) is an amalgamation of Theorems 4.2 and 4.4 in \cite{KT06}, Theorem 1.3 in \cite{Badger1} and Theorem \ref{generalopenthm} above.

Assume that $\Omega\subset\RR^n$ is a 2-sided NTA domain such that $\omega^+\ll\omega^-\ll\omega^+$ and $\log \frac{d\omega^-}{d\omega^+}\in\VMO(d\omega^+)$. The first statement that we need to verify is that there exists an integer $d_0\geq 1$ such that $\partial\Omega$ is locally well-approximated by $\mathcal{H}_{d_0}$ (recall Definitions \ref{lapprox} and \ref{HdApprox}). By Theorems 4.2 and 4.4 in \cite{KT06}: there exists $d_0>0$ (depending only on $n$ and the NTA constants of $\Omega^+$ and $\Omega^-$) such that if $x\in\partial\Omega$, if $x_i\in\partial\Omega$ is a sequence such that $x_i\rightarrow x$, and if $r_i\rightarrow 0$ is a vanishing sequence of positive numbers, then there exists a subsequence $(x_{ij},r_{ij})_{j\geq 1}$ of $(x_i,r_i)_{i\geq 1}$ and a nonconstant harmonic polynomial $h:\RR^n\rightarrow\RR$ of degree at most $d_0$ such that $r_{ij}^{-1}(\partial\Omega-x_{ij})$ converges to $\Sigma_h$ in the Hausdorff distance, uniformly on compact sets. (Said more briefly, all $\emph{pseudo blow-ups}$ of $\partial\Omega$ are zero sets of harmonic polynomials of degree at most $d_0$.) Now suppose for contradiction that there exists a compact set $K\subset\partial\Omega$ such that $\Theta^{\mathcal{H}_{d_0}}_{\partial\Omega}(x,r)$ does not vanish uniformly on $K$ as $r\rightarrow 0$. Then there exist $\varepsilon>0$ and sequences $x_i\in K$ and $r_i\downarrow 0$ so that \begin{equation}\label{goat1}\Theta^{\mathcal{H}_{d_0}}_{\partial\Omega}(x_i,r_i)\geq \varepsilon\quad\text{for all }i\geq 1.\end{equation} Passing to a subsequence, we may assume that $x_i\rightarrow x\in K$ (since $K$ is compact). Then Theorems 4.2 and 4.4 in \cite{KT06} yield a further subsequence $(x_{ij},r_{ij})_{j=1}^\infty$ of $(x_i,r_i)_{i=1}^\infty$ such that $\lim_{j\rightarrow\infty}\Theta^{\mathcal{H}_{d_0}}_{\partial\Omega}(x_{ij},r_{ij})=0$. This contradicts (\ref{goat1}). Therefore, our supposition was false, and hence, we get that $\lim_{r\downarrow 0}\Theta^{\mathcal{H}_{d_0}}_{\partial\Omega}(x,r)=0$ uniformly on $K$. In other words, $\partial\Omega$ is locally well-approximated by $\mathcal{H}_{d_0}$, as desired.

Next, by Theorem 1.3 in \cite{Badger1}, we can decompose $\partial\Omega$ into disjoint sets $\Gamma_d$ ($1\leq d\leq d_0$), \begin{equation} \partial\Omega=\Gamma_1\cup\dots\cup\Gamma_{d_0},\quad i\neq j \Rightarrow \Gamma_i\cap\Gamma_j=\emptyset,\end{equation} where: \begin{itemize} \item Every blow-up of $\partial\Omega$ centered at $x\in\Gamma_d$ is the zero set of a \emph{homogeneous} harmonic polynomial of degree $d$, such that the zero set divides space into two components.
\item The set of ``singularities" $\partial\Omega\setminus\Gamma_1$ has zero harmonic measure: $\omega^\pm(\partial\Omega\setminus\Gamma_1)=0$.
\end{itemize} Now because $\omega^\pm(\partial\Omega\cap B(x,r))>0$ for all $x\in\partial\Omega$ and $r>0$, and because $\omega^\pm(\partial\Omega\setminus\Gamma_1)=0$, we conclude that $\omega^\pm(\Gamma_1\cap B(x,r))>0$ for all $x\in\partial\Omega$ and $r>0$. In particular, this implies $\Gamma_1$ is dense in $\partial\Omega$.

Finally, because $\partial\Omega$ is locally well-approximated by $\mathcal{H}_{d_0}$ and every blow-up of $\partial\Omega$ is the zero set of a homogeneous harmonic polynomial, by Theorem \ref{generalopenthm} above, we can also decompose $\partial\Omega$ as \begin{equation}\partial\Omega=\Gamma_1\cup\Gamma_s\end{equation} where $\Gamma_1$ is the set of flat points of $\partial\Omega$ and blow-ups of $\partial\Omega$ centered at $x\in\Gamma_s$ are zero sets of homogeneous harmonic polynomials of degree at least 2. In particular $\Gamma_s=\partial\Omega\setminus\Gamma_1=\Gamma_2\cup\dots\cup\Gamma_{d_0}$. By Theorem \ref{generalopenthm}, $\Gamma_1$ is open, locally Reifenberg flat with vanishing constant, and $\Gamma_1$ has Hausdorff dimension $n-1$. This completes the proof of Theorem \ref{structthm}.

\begin{ack}\label{ackref}
The research in this article first appeared in the author's Ph.D.\ thesis at the University of Washington, under the supervision of Professor Tatiana Toro. The author would like to thank his advisor for introducing him to free boundary problems for harmonic measure, for her continual encouragement, and for her generosity in time and ideas. The author also would like to thank Professor Guy David at Universit\'e Paris--Sud XI for useful conversations, which helped lead the author to Theorem \ref{polythm} during the author's visit to Orsay in Spring 2010.
\end{ack}

\begin{bibdiv}
\begin{biblist}

\bib{HFT}{book}{
 title={Harmonic Function Theory},
 author={Axler, S.},
 author={Bourdon, P.},
 author={Ramey, W.},
 publisher={Springer-Verlag},
 place={New York},
 date={2001},
 edition={Second edition},
 series={Graduate Texts in Mathematics},
 volume={137}
}

\bib{Badger1}{article}{
 title={Harmonic polynomials and tangent measures of harmonic measure},
 author={Badger, M.},
 journal={Rev. Mat. Iberoam.},
 volume={27},
 number={3},
 date={2011},
 pages={841--870}
}

\bib{Badger2}{article}{
 title={Null sets of harmonic measure on NTA domains: Lipschitz approximation revisited},
 author={Badger, M.},
 journal={Math. Z.},
 volume={270},
 date={2012},
 pages={241--262}
}

\bib{Bishop}{article}{
 title={Some questions concerning harmonic measure},
 author={Bishop, C. J.},
 book={
  title={Partial differential equations with minimal smoothness and applications},
  publisher={Springer-Verlag},
  place={New York},
  date={1992},
  editor={Dahlberg, B.},
  editor={Fabes, E.},
  editor={Feffereman, R.},
  editor={Jerison, D.},
  editor={Kenig, C.},
  editor={Pipher, J.},
  series={The IMA Volumes in Mathematics and its Applications},
  volume={42},
 },
 pages={89--97}
}

\bib{BJ}{article}{
 title={Wiggly sets and limit sets},
 author={Bishop, C. J.},
 author={Jones, P. W.},
 journal={Ark. Mat.},
 volume={35},
 number={2},
 date={1997},
 pages={1--39}
}

\bib{CKL}{book}{
 title={Harmonic Measure},
 subtitle={Geometric and Analytic Points of View},
 author={Capogna, L.},
 author={Kenig, C. E.},
 author={Lanzani, L.},
 publisher={American Mathematical Society},
 place={Providence, R.I.},
 date={2005},
 series={University Lecture Series},
 volume={35}
}

\bib{D}{article}{
 title={Hausdorff dimension of uniformly non flat sets with topology},
 author={David, G.},
 journal={Publ. Mat.},
 volume={48},
 date={2004},
 pages={187--225}
}

\bib{DT}{article}{
 title={Reifenberg parameterizations for sets with holes},
 author={David, G.},
 author={Toro, T.},
 journal={Mem. Amer. Math. Soc.},
 volume={215},
 date={2012},
 number={1012}
}

\bib{JK}{article}{
 title={Boundary behavior of harmonic functions in non-tangentially accessible domains},
 author={Jerison, D.},
 author={Kenig, C. E.},
 journal={Adv. Math.},
 volume={46},
 number={1},
 date={1982},
 pages={80--147}
}

\bib{Jones}{article}{
 title={Rectifiable sets and the traveling salesman problem},
 author={Jones, P. W.},
 journal={Invent. Math.},
 volume={102},
 number={1},
 pages={1--15},
 date={1990}
}

\bib{KPT}{article}{
 title={Boundary structure and size in terms of interior and exterior harmonic measures in higher dimensions},
 author={Kenig, C. E.},
 author={Preiss, D.},
 author={Toro, T.},
 journal={J. Amer. Math. Soc.},
 volume={22},
 date={2009},
 number={3},
 pages={771--796}
}

\bib{KT97}{article}{
 title={Harmonic measure on locally flat domains},
 author={Kenig, C. E.},
 author={Toro, T.},
 journal={Duke Math. J.},
 volume={87},
 date={1997},
 number={3},
 pages={509--551}
}

\bib{KT99}{article}{
 title={Free boundary regularity for harmonic measures and Poisson kernels},
 author={Kenig, C.E.},
 author={Toro, T.},
 journal={Ann. Math.},
 volume={150},
 date={1999},
 pages={369--454}
}

\bib{KT06}{article}{
 title={Free boundary regularity below the continuous threshold},
 subtitle={2-phase problems},
 author={Kenig, C. E.},
 author={Toro, T.},
 journal={J. Reine Angew. Math.},
 volume={596},
 date={2006},
 pages={1--44}
}

\bib{LN}{article}{
 title={Regularity and free boundary regularity for the $p$-Laplace operator in Reifenberg flat and Ahlfors regular domains},
 author={Lewis, J. L.},
 author={Nystr\"om, K.},
 journal={J. Amer. Math. Soc.},
 date={2012},
 volume={25},
 number={3},
 pages={827--862}
}

\bib{Lewy}{article}{
 title={On the minimum number of domains in which nodal lines of spherical harmonics divide the sphere},
 author={Lewy, H.},
 journal={Comm. Part. Diff. Equ.},
 volume={2},
 date={1977},
 pages={1233--1244}
}

\bib{MV}{article}{
 title={Linear approximation property, Minkowski dimension, and quasiconformal spheres},
 author={Mattila, P.},
 author={Vuorinen, M.},
 journal={J. London Math. Soc. (2)},
 volume={42},
 date={1990},
 pages={249--266}
}

\bib{Reifenberg}{article}{
 title={Solution of the Plateau problem for $m$-dimensional surface of varying topological type},
 author={Reifenberg, E.},
 journal={Acta Math.},
 volume={104},
 date={1960},
 pages={1--92}
}

\bib{R}{book}{
 title={Hausdorff measures},
 author={Rogers, C. A.},
 publisher={Cambridge University Press},
 place={Cambridge},
 date={1988}
}

\bib{Sz}{article}{
 title={An example concerning the topological character of the zero-set of a harmonic function},
 author={Szulkin, A.},
 journal={Math. Scand.},
 volume={43},
 date={1978/79},
 number={1},
 pages={60--62}
}

\bib{Toro}{article}{
 title={Geometric conditions and existence of bi-Lipschitz parameterizations},
 author={Toro, T.},
 journal={Duke Math. J.},
 volume={77},
 number={1},
 date={1995},
 pages={193--227}
}

\end{biblist}
\end{bibdiv}

\end{document}